\newcommand{\algt}{\mbox{Alg}_T}
\newcommand{\algtm}{\algt(\M)}
\newcommand{\alg}{\mathsf{Alg}}
\def\Ee{\mathcal{E}}
\def\Set{\mathrm{Set}}
\newtheorem{theorem}{Theorem}[section]
\newtheorem{proposition}[theorem]{Proposition}
\newtheorem{lemma}[theorem]{Lemma}
\newtheorem{corollary}[theorem]{Corollary}
\theoremstyle{definition}
\newtheorem{definition}[theorem]{Definition}
\newtheorem{defn}[theorem]{Definition}
\newtheorem{example}[theorem]{Example}
\newtheorem{problem}[theorem]{Problem}
\theoremstyle{remark}
\newtheorem{remark}[theorem]{Remark}
\numberwithin{equation}{section}
\newcommand{\bbF}{\mathbb{F}}
\newcommand{\M}{\mathcal{M}}
\newcommand{\calm}{\mathcal{M}}
\newcommand{\calD}{\mathcal{D}}
\newcommand{\Alg}{\mathbb{A}{\rm lg}}
\newcommand{\Cat}{\mathbb{C}{\rm at}}
\newcommand{\fC}{\mathfrak{C}}
\newcommand{\fD}{\mathfrak{D}}
\newcommand{\smallprof}[1]
{\raisebox{.05cm}{\scalebox{0.8}{#1}}}
\newcommand{\uc}{\underline{c}}
\newcommand{\duc}{\smallprof{$\binom{d}{\uc}$}}
\newcommand{\Sigmac}{\Sigma_{\fC}}
\newcommand{\sF}{\mathscr{F}}
\newcommand{\sW}{\mathscr{W}}
\newcommand{\sQ}{\mathscr{Q}}
\renewcommand{\emptyset}{\varnothing}
\renewcommand{\tilde}[1]{\widetilde{#1}}
\newcommand{\algom}{\alg(\O;\M)}
\newcommand{\po}{\ar@{}[dr]|(.7){\Searrow}}
\newcommand{\pb}{\ar@{}[dr]|(.3){\Nwarrow}}
\newcommand{\cat}[1]{\mathcal{#1}}
\DeclareMathOperator{\inj}{inj}
\newcounter{todocounter}
\def\Ee{\mathcal{E}}
\def\Alg{\mathrm{Alg}}
\def\colim{\mathrm{colim}}
\def\Set{\mathrm{Set}}
\def\Cat{\mathrm{Cat}}
\def\sSet{\mathrm{sSet}}
\begin{document}

\title{Model structures on operads and algebras from a global perspective}

\author{Michael Batanin}
\address{Mathematical Institute of the Academy \\ \v{Z}itn\'a 25, 115~67 Prague 1, Czech Republic}
\email{bataninmichael@gmail.com}

\author{Florian De Leger}
\address{Mathematical Institute of the Academy \\ \v{Z}itn\'a 25, 115~67 Prague 1, Czech Republic}
\email{de-leger@math.cas.cz}

\author{David White}
\address{Department of Mathematics \\ Denison University
\\ Granville, OH 43023}
\email{david.white@denison.edu}

\thanks{The authors were supported by RVO:67985840 and Praemium Academiae of M. Markl.}

\maketitle

\begin{abstract}
This paper studies the homotopy theory of the Grothendieck construction using model categories and semi-model categories, provides a unifying framework for the homotopy theory of operads and their algebras and modules, and uses this framework to produce model structures, rectification results, and properness results in new settings. In contrast to previous authors, we begin with a global (semi-)model structure on the Grothendieck and induce (semi-)model structures on the base and fibers. In a companion paper, we show how to produce such global model structures in general settings. Applications include numerous flavors of operads encoded by polynomial monads and substitudes (symmetric, non-symmetric, cyclic, modular, higher operads, dioperads, properads, and PROPs), (commutative) monoids and their modules, and twisted modular operads. We also prove a general result for upgrading a semi-model structure to a full model structure.
\end{abstract}

{\small \tableofcontents}

\section{Introduction}

In this paper, we study the homotopy theory of the Grothendieck construction. We begin with a category $\cat B$ and a functor $\Phi: \cat B^{op} \to CAT$ to the large category of categories. Previous authors have induced a model structure on $\int \Phi$ from model structures on the categories $\cat B$ and $\Phi(O)$ (for $O\in \cat B$), under strong hypotheses governing the interaction between the $\Phi(O)$. Our main application of interest is to the case where $\cat B$ is a category of operads and $\Phi(O)$ is the category of $O$-algebras (or left $O$-modules). Unfortunately, in many of these settings, the categories $\Phi(O)$ either do not have model structures or were not previously known to have model structures, prior to the present paper. Even when such model structures exist, often the hypotheses on the categories $\Phi(O)$ are not satisfied.

Consequently, instead of assuming model structures on $\cat B$ and all $\Phi(O)$ exist, then deducing a model structure on $\int \Phi$, we go the opposite way. That is, we assume $\cat B$ is encoded by a polynomial monad (all categories of operads we are interested are), then prove that $\int \Phi$ is encoded by a polynomial monad in sufficient generality to cover our main applications of interest. We define the global model structure on $\int \Phi$, and we prove that, when it exists, we obtain so-called horizontal and vertical model structures on $\cat B$ and $\Phi(O)$. We phrase our work in the maximum possible generality, so that this step applies even when $\int \Phi$ is not encoded by a polynomial monad. We illustrate applications of this general approach, beyond the setting of polynomial monads, in the context of the category of small categories, in Section 3.4.

In \cite[Theorem 4.17]{companion} we determine conditions under which the global model structure exists on $\int \Phi$. However, even without these conditions, there always exists \footnote{Technically, this existence requires our ambient model category $\M$ to be cofibrantly generated, but we will always assume this throughout the paper.} a global semi-model structure, and we prove that its existence implies the existence of horizontal and vertical semi-model structures.

We remind the reader that polynomial monads are equivalent to $\Sigma$-cofibrant colored operads, and most categories of operads can be viewed as algebras of a polynomial monad. In particular, our work provides another way to study the $\Sigma$-cofibrant colored operad whose category of algebras is the category of pairs $(O,A)$ where $O$ is an operad (non-symmetric, symmetric, cyclic, modular, etc.) and $A$ is an $O$-algebra. Furthermore, when this category of pairs has a model structure, our work implies that every category of $O$-algebras does too, even when $O$ is not $\Sigma$-cofibrant. Since the homotopy theory of $\Sigma$-cofibrant operads is much easier, this is a powerful technique. 

The Grothendieck construction has appeared elsewhere in homotopy theory, suggesting many possible applications of our work. When $\cat{B}$ is an indexing category, composing $\Phi$ with the nerve functor $N:Cat \to sSet$, from the category of small categories to the category of simplicial sets, gives rise, via the Grothendieck construction, to a model for the homotopy colimit of the diagram $\Phi$ as the geometric realization of the simplicial replacement of $\Phi$ \cite{bousfield-kan, hollander}. It provides a way to shift from the study of colored operads with a fixed set of colors to the study of colored operads in general 
and an analogous shift in the setting of enriched categories \cite{stanculescu}. The Grothendieck construction has also been used to study slice categories, parameterized spaces and spectra, and global equivariant homotopy theory \cite{harpaz-prasma-integrated}. It also provides a setting for the study of Cisinski's accessible derivators \cite{cis}. More recently, the Grothendieck construction 
has emerged as a setting for the study of spaces of long knots \cite{deleger}, Reedy model categories \cite{cagne-mellies} (which has applications in the theory of Smith ideals \cite{white-yau-5, white-yau6}, and deformation theory \cite{L-infty}. 

Numerous authors have studied the Grothendieck construction using model categories, including Roig \cite{roig}, Stanculescu \cite{stanculescu} (correcting a mistake of Roig), Harpaz and Prasma \cite{harpaz-prasma-integrated}, Balzin \cite{balzin}, and Cagne and Melli\`{e}s \cite{cagne-mellies}. All of these authors begin with model structures on the base and fibers, and then give conditions for the existence of a global model structure on $\int \Phi$ compatible with the given model structures. We avoid these assumptions on the base and fibers, and the model structure we produce on $\int \Phi$ coincides with the model structure produced by these previous papers, when both exist. Often, $\int \Phi$ has a semi-model structure that is provably not a model structure.

We work in the ambient setting of a cofibrantly generated monoidal model category $\M$, and we transfer the (semi-)model structure on $\int \Phi$ from a suitable category of collections built from $\M$. For example, if $\int \Phi$ encodes symmetric operads and their algebras, then we transfer along the forgetful functor $U: \int \Phi \to \M^\Sigma \times \M$ where $\M^\Sigma$ is the category of symmetric sequences in $\M$. If $\int \Phi$ encodes non-symmetric operads and their left modules, we transfer along $U: \int \Phi \to \M^\mathbb{N} \times \M^\mathbb{N}$. Since the (semi-)model structure on $\int \Phi$ arises via the machinery of transfer, it is automatically cofibrantly generated (and combinatorial if $\M$ is). Our approach allows us to prove that $\int \Phi$ is proper, under suitable hypotheses on $\M$.

Furthermore, we investigate how model categorical properties (such as cofibrant generation, properness, and combinatoriality) on $\int \Phi$ imply the same properties on $\cat{B}$ and the categories $\Phi(O)$. 
Sometimes, even though $\int \Phi$ has only a semi-model structure, we are able to produce full model structures on $\cat{B}$ and certain $\Phi(O)$, via a new and powerful technique for promoting a semi-model structure to a full model structure. In particular, this gives a new way to produce model structures on categories of $O$-algebras for various operads $O$.

The generality of our approach allows for applications in the theory of operads, algebras, and modules for a wide variety of operads including non-symmetric, symmetric, cyclic, modular, hyperoperads, and twisted modular operads. We spell out these applications in Section \ref{sec:consequences} and in Section \ref{sec:twisted}. In many cases we recover known results in disparate areas as part of the same general framework, and in some cases we produce model structures where none were known to exist before.

Our approach also works beyond the setting of polynomial monads and substitudes, if one is able to produce a transferred model structure on $\int \Phi$ through other means. We illustrate with an application in the setting of the model category of small categories, where algebras over any finitary monad admit transferred model structures.

We start in Section \ref{sec:preliminaries} with a review of basic definitions and notation that we will use throughout the paper. Most results have appeared elsewhere, but there are new results about semi-model categories in \ref{subsec:semi}. In subsection \ref{subsec:grothendieck-preliminaries}, we prove a general result that allows us to encode the Grothendieck construction $\int \Phi$ as a category of algebras over a polynomial monad, and we provide numerous examples. We also list examples in Theorem \ref{thm:global-model-examples} where we know that the global model structure on $\int \Phi$ exists, including examples from the theory of polynomial monads, substitudes, and commutative monoids (which are not encoded by a polynomial monad) in various ambient model categories. Another example is the case of general finitary monads valued in the 2-category of categories, following \cite{lack}.

In Section \ref{sec:global-model}, we deduce the existence of model structures on $\cat B$ and the $\Phi(O)$ categories from the existence of the global model structure. When $\int \Phi$ is only a semi-model structure (which occurs in great generality), we deduce semi-model structures on $\cat B$ and the $\Phi(O)$. We then study questions of properness, of rectification between the categories $\Phi(O)$, and left cofinal Quillen functors. Applications are sprinkled throughout this section, including to rectification between flexible monads in the setting of \cite{lack}, operadic bimodules, and a general theorem that allows us to provide full model structures on categories $\Phi(O)$ even when $\int \Phi$ is only a semi-model structure (Theorem \ref{thm:semi-to-full}).

In Section \ref{sec:non-tame}, we carefully discuss what can be said when the global semi-model structure exists but is provably not a full model structure (as occurs for symmetric operads). We recover the best known results for symmetric operads via our machinery. The generality of our approach in Sections \ref{sec:global-model} and \ref{sec:non-tame} means that our results will also hold for other flavors of operads, including cyclic operads, $n$-operads, properads, dioperads, PROPs, and other settings detailed in \cite{batanin-berger}. We conclude the paper in Section \ref{sec:twisted} with an application producing a new model structure on the category of twisted modular operads (Theorem \ref{thm:twisted-modular}).

\section{Preliminaries} \label{sec:preliminaries}

We assume the reader is familiar with monoidal model categories \cite{hovey-book} and the basics of colored operads (a.k.a. multicategories) \cite{BM07}. Our main results will have applications in the setting of colored non-symmetric operads, and various flavors of colored symmetric operads. In this section we fix notation and prove some preliminary results.

\subsection{Semi-model structures} \label{subsec:semi}

Suppose $\M$ is a model category, $T$ is a monad on $\M$, and $U$ is the forgetful functor from $T$-algebras to $\M$. A model structure on $T$-algebras is called \textit{transferred} from $\M$ if a morphism $f$ is a weak equivalence (resp. fibration) if and only if $U(f)$ is a weak equivalence (resp. fibration) in $\M$. When $T$ is the free $P$-algebra monad for a colored operad $P$, then we require $\M$ to be a monoidal model category in which $P$ acts. Our set-up will be that $P$ is valued in $\M$, i.e. each object of $P$ is an object of $\M$.

When attempting to transfer a model structure from $\M$ to the category of $T$-algebras, most of the model category axioms are formal. Lemma 2.3 of \cite{SS00} shows how to verify  the retract axiom, the two out of three property, the lifting of a cofibration against a trivial fibration, and the factorization into a cofibration followed by a trivial fibration. For the other lifting and factorization axioms, there are two main approaches. One is to assume $\M$ is extremely nice (e.g. with a coassociative, cocommutative Hopf interval object, and a symmetric monoidal fibrant replacement functor) \cite{BM03}, and then to use the path object argument to do the transfer. Our main applications of interest are not known to satisfy these hypotheses. The other approach is to analyze, for each trivial cofibration $K\to L$ in $\M$, pushouts of $T$-algebras of the form

\begin{align} \label{diagram:pushout}
\xymatrix{
	T(K) \po \ar[r] \ar[d] & T(L) \ar[d] \\
	X \ar[r] & P
}
\end{align}

Such pushouts are called {\em free algebra extensions}. One must show that the pushout morphism $X\to P$ is a weak equivalence. This analysis often involves a complicated filtration \cite{SS00}, \cite{white-commutative-monoids}, \cite{harper-operads}, but does not require strong hypotheses to be assumed on $\M$. Unfortunately, it is often the case that inducting up the filtration only works when $X$ is a cofibrant $T$-algebra (resp. cofibrant in $\M$). In this case, one obtains less than a full model structure on $T$-alg, and ends up with only a semi-model structure (resp. semi-model structure over $\M$). This means that lifting of trivial cofibrations $f$ against fibrations, and factorizations of morphisms $g$ into trivial cofibrations followed by fibrations, only work when $f$ and $g$ have cofibrant domain (resp. domain that forgets to a cofibrant object in $\M$). These notions were introduced in \cite{spitzweck-thesis} under the name $J$-semi model category (resp. $J$-semi-model category over $\M$). Formally, we define:

\begin{defn} \label{defn:semi-model-cat}
A \textit{semi-model structure} on a category $\M$ consists of classes of weak equivalences $\sW$, fibrations $\sF$, and cofibrations $\sQ$ satisfying the following axioms:

\begin{enumerate}
\item[M1] Fibrations are closed under pullback.
\item[M2] The class $\sW$ is closed under the two out of three property.
\item[M3] $\sW,\sF,\sQ$ contain the isomorphisms, are closed under composition, and are closed under retracts.
\item[M4] 
\begin{enumerate}
\item[i] Cofibrations have the left lifting property with respect to trivial fibrations.
\item[ii] Trivial cofibrations whose domain is cofibrant have the left lifting property with respect to fibrations.
\end{enumerate}
\item[M5] 
\begin{enumerate}
\item[i] Every morphism in $\M$ can be functorially factored into a cofibration followed by a trivial fibration. 
\item[ii] Every morphism whose domain is cofibrant can be functorially factored into a trivial cofibration followed by a fibration.
\end{enumerate} 
\end{enumerate}

If, in addition, $\M$ is bicomplete, then we call $\M$ a \textit{semi-model category}. We say $\M$ is \textit{cofibrantly generated} if there are sets of morphisms $I$ and $J$ in $\M$ such that $\inj I$ is the class of trivial fibrations, $\inj J$ is the class of fibrations in $\M$, the domains of $I$ are small relative to $I$-cell, and the domains of $J$ are small relative to morphisms in $J$-cell whose domain is cofibrant. We will say $\M$ is \textit{combinatorial} if it is cofibrantly generated and locally presentable.
\end{defn}

We note that, in a cofibrantly-generated semi-model category, M1 is redundant. In any semi-model category, trivial fibrations are closed under pullback because they are characterized as morphisms having the right lifting property with respect to cofibrations.

Often, semi-model structures are produced by transfer from a model structure along an adjunction. In this case, sometimes a lesser cofibrancy assumption is required in M4(ii) and M5(ii).

\begin{definition} \label{defn:semi-model-over}
Assume there is an adjunction $F:\calm \rightleftarrows \calD:U$ where $\calm$ is a cofibrantly generated model category, $\calD$ is bicomplete, and $U$ preserves colimits over non-empty ordinals. We say an object $A$ in $\calD$ is \textit{cofibrant in $\calm$} if $U(A)$ is cofibrant in $\calm$. We say that $\cat{D}$ is a \textbf{semi-model category over $\calm$} if $\calD$ satisfies a modification of the axioms of Definition \ref{defn:semi-model-cat}, replacing M4(ii) and M5(ii) with the first two axioms below:

\begin{itemize}
\item M4(ii)': Trivial cofibrations whose domain is cofibrant in $\M$ have the left lifting property with respect to fibrations.
\item M5(ii)': Every morphism whose domain is cofibrant in $\M$ can be functorially factored into a trivial cofibration followed by a fibration.
\end{itemize}
\end{definition}

Some authors also require the following condition:
\begin{itemize}
\item M6: The initial object of $\calD$ is cofibrant in $\M$, and any cofibration $f:A\to B$ in $\calD$ whose domain is cofibrant in $\M$ is mapped to a cofibration $U(f)$ in $\M$.
\end{itemize}

In the context of operads, this axiom is related to the question of when a cofibrant algebra forgets to an underlying cofibrant object. Some semi-model categories of operad-algebras satisfy this axiom while others don't, e.g., the category of commutative monoids in a monoidal model category that satisfies the commutative monoid axiom but not the strong commutative monoid axiom \cite{white-commutative-monoids}. Hence, we will not require M6, because we want our work to apply in maximal generality. Note that the weaker definition in \cite{fresse-book} (12.1.1) is what Spitzweck would call an $(I,J)$-semi model category \cite{spitzweck-thesis}. We have no need of this notion, since our applications will be $J$-semi model categories, but of course results proven with the weaker definition will be true for our setting. Our main source of examples of semi-model structures will be Theorems 2.2.1 and 8.1.7 of \cite{Reedy-paper}, Theorem 6.2.3 and 6.3.1 of \cite{white-yau}, and Corollary 3.8 of \cite{white-commutative-monoids}.

Sometimes we can get from a semi-model structure to a full model structure using the following lemma (which has not appeared in the literature before).

\begin{lemma} \label{lemma:factorization-suffices}
Suppose $\M$ is a cofibrantly generated semi-model category. Suppose any morphism admits a factorization into a trivial cofibration followed by a fibration. Then $\M$ is a model category, i.e., any trivial cofibration lifts against any fibration.
\end{lemma}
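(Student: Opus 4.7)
The plan is the standard retract argument, enabled by the factorization hypothesis. Let $f\colon A\to B$ be a trivial cofibration and $p\colon X\to Y$ a fibration; I need to produce a lift for an arbitrary commutative square $(a,b)\colon f \to p$ with $pa = bf$.

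By the hypothesis, factor $f = q\circ j$ with $j\colon A \to C$ a trivial cofibration and $q\colon C \to B$ a fibration. Since $f$ and $j$ are both weak equivalences, the two-out-of-three property (M2) forces $q$ to be a weak equivalence, and thus a trivial fibration. As $f$ is a cofibration and $q$ is a trivial fibration, axiom M4(i) provides a lift $s\colon B \to C$ with $sf = j$ and $qs = \mathrm{id}_B$, exhibiting $f$ as a retract of $j$ in the arrow category:
\[
\xymatrix{A \ar@{=}[r] \ar[d]_f & A \ar[d]^{j} \ar@{=}[r] & A \ar[d]^f \\ B \ar[r]_s & C \ar[r]_q & B.}
\]

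Next, I would transfer the lifting problem: the square $(a, bq)\colon j \to p$ commutes because $(bq)j = b(qj) = bf = pa$. Any lift $h\colon C \to X$ of this transferred square (satisfying $hj = a$ and $ph = bq$) yields $hs\colon B \to X$ as a solution to the original problem, since $(hs)f = h(sf) = hj = a$ and $p(hs) = (bq)s = b(qs) = b$. Thus the original lifting problem reduces to lifting $j$ against $p$.

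The main obstacle is producing the lift $h$: the trivial cofibration $j$ has the possibly non-cofibrant domain $A$, so axiom M4(ii) does not apply directly. I would close this gap by using the cofibrantly generated hypothesis. The class of morphisms having LLP against every fibration coincides with the saturation $J$-cof of the generating trivial cofibrations, which is closed under pushouts, transfinite compositions, and retracts. Under the factorization hypothesis, the small object argument with $J$ produces a factorization $f = q\circ j$ in which $j$ is a transfinite composition of pushouts of maps in $J$, hence $j \in J$-cell $\subseteq J$-cof. Since $f$ is a retract of $j$, it follows that $f \in J$-cof, so $f$ has the left lifting property against every fibration, and $\M$ is a full model category.
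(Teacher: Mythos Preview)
Your argument has a genuine gap in the final paragraph, where you conflate two different factorizations of $f$. In the first part you use the \emph{hypothesized} factorization $f = q\circ j$ with $j$ a trivial cofibration; from this you correctly deduce that $q$ is a trivial fibration and that $f$ is a retract of $j$. In the last paragraph you switch to the small object argument, obtaining $f = q'\circ j'$ with $j'\in J\text{-cell}$ and $q'\in J\text{-inj}$. You then write ``since $f$ is a retract of $j$'', but the retract was established only for the first $j$, not for $j'$. To exhibit $f$ as a retract of $j'$ you would need to lift $f$ (a cofibration) against $q'$, which requires $q'$ to be a \emph{trivial} fibration; and for that you need $j'$ to be a weak equivalence. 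In a semi-model category this is not automatic: morphisms in $J\text{-cell}$ are only known to be weak equivalences when their domain is cofibrant, and here the domain $A$ need not be.

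This missing step --- that under the factorization hypothesis every map in $J\text{-cell}$ is a weak equivalence, regardless of the cofibrancy of its domain --- is precisely where the paper expends most of its effort. The paper argues inductively: first each $a\in J$ is shown to be a weak equivalence (factor $a$ via the hypothesis, lift $a$ against the resulting fibration since $a\in J$ and the fibration is in $J\text{-inj}$, apply the retract argument); then for a pushout $b$ of $a$, one factors $b$ via the hypothesis and uses the lift of $a$ against the resulting fibration, together with the universal property of the pushout, to exhibit $b$ as a retract of a trivial cofibration; finally one handles transfinite compositions. Only after this groundwork does the small object argument combined with the retract argument finish the proof, exactly as you outline. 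Your sketch is thus the correct endgame, but it is missing the substantive middle.
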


\begin{proof}
Let $J$ denote the set of generating trivial cofibrations. Assuming the factorization posited in the hypotheses, we prove that $J$-cell is contained in the weak equivalences, and furthermore that all morphisms in $J$-cell have the left lifting property with respect to the fibrations. A related result is Theorem 2.2 in \cite{spitzweck-thesis}, but we give a self-contained account here without requiring any results from \cite{spitzweck-thesis}.
	
Given $a\in J$, factor $a$ as $p\circ j$ where $j$ is a trivial cofibration and $p$ is a fibration. Since $\M$ is cofibrantly generated, $p$ is in $J$-inj. This implies $a$ has the right lifting property with respect to $p$. So $a$ is a retract of $j$, hence is a weak equivalence.

Now suppose $b:C\to Y:=C\coprod_A B$ is a pushout of $a\in J$. Since the fibrations have the right lifting property with respect to $J$, this implies the trivial fibrations do too, so $a$ is a cofibration \cite[Lemma 1.7.1]{barwickSemi}. Hence, $b$ is a cofibration. Factor $b$ into a trivial cofibration $i: C\to X$ followed by a fibration $p:X\to Y$. Consider the following diagram:
	
	\begin{align*}
	\xymatrix{A\ar[r] \ar[d]_a \po & C \ar[d] \ar[r] & X\ar[d]^p \\ B\ar[r] \ar@{..>}[urr] & Y \ar@{=}[r] & Y}
	\end{align*}

As $p$ is in $J$-inj, the dotted lift exists. Thus, both $B$ and $C$ map to $X$ and the diagram formed by the upper triangle commutes, so this induces a morphism from the pushout $Y$ to $X$ filling the second square. Now apply the retract argument \cite[Lemma 1.1.9]{hovey-book} to see that $b$ is a retract of the trivial cofibration $i$ and hence is a weak equivalence. We see that pushouts of morphisms in $J$ are weak equivalences (hence trivial cofibrations). In fact, the same argument constructs a lift for $b$ against any fibration. 

Now suppose $X:\lambda \to \M$ is a $\lambda$-sequence in which each morphism is a pushout of a morphism in $J$. We have seen that each morphism is thus a cofibration, a weak equivalence, and has the left lifting property with respect to fibrations. The transfinite composition is also a cofibration, because cofibrations are characterized by the left lifting property with respect to trivial fibrations \cite[Lemma 1.7.1]{barwickSemi}. We will show that the composite $f_{\lambda}:X_0\to X_\lambda$ has the left lifting property with respect to any fibration $p:E\to Z$. The composition of any finite number of morphisms with the left lifting property with respect to fibrations is again such a morphism, by induction. For a limit ordinal, suppose $g_{\beta}:X_\beta \to E$ are the existing lifts, for all $\beta < \lambda$. As $X_\lambda = \colim_{\beta < \lambda} X_\beta$, the $g_\beta$ induce a morphism $g_\lambda: X_\lambda \to E$. This $g_\lambda$ is a lift: 
\[
\xymatrix{
X_0 \ar[r] \ar[d]_{f_\lambda} & E\ar[d]^p \\
X_\lambda \ar[r] \ar@{..>}[ur]^{g_\lambda} & B}
\]
making the two triangles above commute, by the universal property of the colimit. So we see that all morphisms in $J$-cell have the left lifting property with respect to fibrations.

We would like to know that $X\to X_\lambda$ is a weak equivalence. We may factor it into a trivial cofibration followed by a fibration, and then the retract argument tells us it is a weak equivalence. 
	
Finally, we must prove that any trivial cofibration has the left lifting property with respect to fibrations. The small object argument factors any trivial cofibration $a$ into $p\circ j$, a morphism $j$ in $J$-cell followed by a fibration. We have seen that the morphism $j$ must be a weak equivalence, so now the two out of three property shows us that $p$ is a trivial fibration. That $a$ is a cofibration allows us to lift $a$ with respect to $p$, and so the retract argument \cite[Lemma 1.1.9]{hovey-book} shows us that $a$ is a retract of $j$. Thus, $a$ has the left lifting property with respect to any morphism that $j$ does. 
In particular, $a$ will lift against any fibration.
\end{proof}

In fact, it suffices to check even less, as the following lemma demonstrates.

\begin{lemma} \label{lemma:factorization-implies-full}
Let $\M$ be a semi-model category. If, for every cofibration $g:X\to Z$, there is a factorization into a trivial cofibration followed by a fibration, then $\M$ is a model category.
\end{lemma}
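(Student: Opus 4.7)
The plan is to reduce this lemma to the previous Lemma \ref{lemma:factorization-suffices} by showing that, under the given hypothesis, every morphism (not just cofibrations) admits a factorization into a trivial cofibration followed by a fibration.

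First, I would take an arbitrary morphism $f: A \to B$ in $\M$ and apply axiom M5(i) of the semi-model structure to factor it as $f = q \circ c$, where $c: A \to Y$ is a cofibration and $q: Y \to B$ is a trivial fibration. This factorization is available for any morphism, with no cofibrancy assumption on the domain.

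Next, since $c$ is a cofibration, I would invoke the hypothesis of the lemma to factor $c = p \circ j$, where $j: A \to X$ is a trivial cofibration and $p: X \to Y$ is a fibration. Composing, we obtain $f = q \circ p \circ j$. Since $q$ is a trivial fibration (in particular a fibration) and $p$ is a fibration, the composite $q \circ p: X \to B$ is a fibration (fibrations are closed under composition by axiom M3). Thus $f = (q \circ p) \circ j$ is a factorization of $f$ as a trivial cofibration followed by a fibration, with no assumption on the domain of $f$.

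Having established the full factorization axiom M5(ii), I would then apply Lemma \ref{lemma:factorization-suffices} to conclude that $\M$ is a model category. The main (and essentially only) subtle point is verifying that the resulting factorization actually applies to \emph{every} morphism, not just cofibrations, which follows cleanly from combining M5(i) with the hypothesis. There is no serious obstacle here beyond seeing the correct order in which to apply the two factorizations.
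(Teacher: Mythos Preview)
Your proof is correct and follows essentially the same argument as the paper: factor an arbitrary morphism as a cofibration followed by a trivial fibration via M5(i), then apply the hypothesis to the cofibration part, and finish by composing the two fibrations and invoking Lemma~\ref{lemma:factorization-suffices}. The only differences are notational.
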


\begin{proof}
By Lemma \ref{lemma:factorization-suffices}, it is sufficient to verify that any morphism $f:X\to Y$ has a factorization into a trivial cofibration followed by a fibration. Factor $f$ into a cofibration $g:X\to Z$ followed by a trivial fibration. Factor $g$ into a trivial cofibration $X\to W$ followed by a fibration $W\to Z$, using the hypothesis. Observe that the composite $W\to Z\to Y$ is still a fibration, so we are done.
\end{proof}

\begin{remark}
If $\M$ is a semi-model category cofibrantly generated by sets $I$ and $J$, where the domains of morphisms in $J$ are cofibrant, then the first factorization in the proof of Lemma \ref{lemma:factorization-suffices} exists, the second exists if $C$ is cofibrant, and the third exists if $X_0$ is cofibrant. This explains why pushouts in $J$-cell into cofibrant objects, and transfinite compositions between cofibrant objects, are well-behaved. This fact is critical in the proof of the main theorem of \cite{bous-loc-semi}, and examples of semi-model categories of this sort (with domains of $J$ cofibrant) are listed in \cite{crm,bous-loc-semi,white-oberwolfach,Reedy-paper}.
\end{remark}

Powerful techniques are developed in \cite{batanin-berger} and \cite[Section 4]{companion} for transferring model structures to categories such as $\int \Phi$, and also proving that these model structures are left proper. Sometimes, these model structures satisfy a weaker version of left properness that we now recall from \cite{batanin-berger}.

\begin{defn} \label{defn:relatively-left-proper}
A model structure on $\algtm$ is \textit{relatively left proper} if, for every weak equivalence $f:R\to S$, and any cofibration $R\to R'$, if $R$ and $S$ forget to cofibrant objects in $\M$ then the pushout $R' \to R' \cup_R S$ is a weak equivalence of $T$-algebras.
\end{defn}

\subsection{Polynomial monads}

In this section we recall the theory of polynomial monads \cite{batanin-berger}. Polynomial functors have been studied extensively in category theory (e.g. in \cite{kock-poly}). A polynomial monad $T$ on a comma category $\Set/I$ is a monad in the 2-category of overcategories, polynomial functors, and cartesian natural transformations \cite[Definition 6.3]{batanin-berger}. In particular, this implies that $T$ has a decomposition as $ t_! \circ p_* \circ s^*: \Set/I \to \Set/I$ generated by a polynomial:

\[
\xymatrix{
	I & E \ar[l]_s \ar[r]^p & B \ar[r]^t & I
}
\]

We will always assume the map $p$ has finite fibers. Recall that the category of polynomial monads on $\Set/I$ is equivalent to the category of $\Sigma$-free $I$-colored operads \cite{batanin-kock-weber}.

\begin{remark}
When $T$ is a polynomial monad, the category of $T$-algebras always inherits a transferred semi-model structure, by Theorem 6.3.1 of \cite{white-yau}. Sometimes, $Alg_T$ has the additional structure of a semi-model structure over $\Ee$, e.g., when $Alg_T$ is the category of symmetric operads. Thus, in Section \ref{sec:global-model} we will investigate situations featuring $\int \Phi$ where this additional structure is present.
\end{remark}

Given any polynomial monad $T$, we can create a filtration to analyze the critical pushout required for transferring a model structure to $\Alg_T$. This is spelled out in \cite[Proposition 4.6]{companion}. In \cite{batanin-berger}, such a filtration was used for particularly nice polynomial monads, there called \textit{tame polynomial monads} \cite[Definition 6.19]{batanin-berger}, to produce full model structures on $\Alg_T$ that are furthermore left proper (resp. relatively left proper) for nice-enough ambient model categories $\M$ \cite[Theorem 8.1]{batanin-berger}. This theory was generalized in \cite[Section 4]{companion} to define \textit{quasi-tame polynomial monads}, and prove that their categories of algebras have transferred full (resp. left proper, resp. relatively left proper) model structures. Indeed, examples in \cite[Remark 4.19, Example 4.28]{companion} show that without quasi-tameness, only semi-model structures are possible. The current paper can be read without knowing the definitions of tame or quasi-tame, but we sometimes use this terminology when referencing results from \cite{companion}.

\subsection{The Grothendieck construction} \label{subsec:grothendieck-preliminaries}

In this section, we record some basic observations about the Grothendieck construction that we will need.

Let $\cat{B}$ be a category and 
$$\Phi:\cat{B}^{op} \to CAT$$
be a functor.
Let $\phi:O\to O'$ be a morphism in $\cat{B}.$ We will denote  $\phi^* =\Phi(\phi): \Phi(O')\to \Phi(O).$ We can form now the Grothendieck construction $\int \Phi$: the objects are pairs $(O,A)$ where $O\in  \cat{B}$ and $A\in \Phi(O).$ 
A morphism 
$$(O,A)\to (O',A')$$
is a pair $(\phi,f)$ where $\phi:O\to O'$ and $f:A\to \phi^*(A').$ 
We will very often identify a morphism $f$ in $\Phi(O)$ with a morphism $(id,f)$ in $\int \Phi.$ 

As we are interested in (semi-)model structures, we will assume $\int \Phi$ is cocomplete. This implies that each $\phi^*$ is a right adjoint. The Grothendieck construction always has a projection functor $p:\int \Phi \to \cat{B}$ taking $(O,A) \to O$. If each category $\Phi(O)$ has an initial object $i_O$, then this functor $p$ admits a left adjoint $i:\cat{B} \to \int \Phi$ by $O \mapsto (O,i_O)$. In our settings these will form a Quillen pair \cite[Definition 1.12]{barwickSemi}, since $p$ preserves (trivial) fibrations by definition of global and horizontal weak equivalences and fibrations. If each category $\Phi(O)$ has a terminal object $t_O$, then this functor $p$ admits a right adjoint $r:\cat{B} \to \int \Phi$ by $O \mapsto (O,t_O)$. In our settings, this functor $r$ is a right Quillen functor, by the definition of global and horizontal structures.

Sometimes, the functor $i$ admits a further left adjoint $E:\int \Phi \to \cat{B}$. This is the case for certain categories of operads and their algebras, and $E(O,A) = O_A$ is the \textit{enveloping operad}, whose algebras are $O$-algebras under $A$ \cite{BM07}. This is also the case for categories of (commutative) monoids and modules, and $E(R,M) = R_M$ is the \textit{enveloping algebra} of the pair $(R,M)$, where $M$ is an $R$-module. Note also that the functor $E$ does not exist for all categories of operads. For example, if we work with reduced operads (meaning $P(0) = \ast$ is the terminal object), then there is no enveloping operad construction, since $O_A$ is not a reduced operad ($O_A(0) \cong A$).

These functors and the adjunctions are summarized in Figure (\ref{figure:adjoints}):

\begin{align} \label{figure:adjoints}
\xymatrix{
	\int \Phi
	\ar@<0.75cm>@{}[d]|{\dashv} 
	\ar@/^0.5cm/[d]_{p} 
	\ar@{}[d]|{\dashv} 
	\ar@<-0.75cm>@{}[d]|{\dashv} 
	\ar@/_1cm/[d]_{E} 
	\\
	\cat{B}
	\ar@/_1cm/[u]_{r} 
	\ar@/^0.5cm/[u]_{i} 
}
\end{align}

\subsection{Polynomial monad for the Grothendieck construction}

In this section, we summarize a result from \cite[Proposition 3.2]{companion}, where we prove that, if $\cat{B}$ is the category of algebras over a polynomial monad $T$, and if there is a reasonable way to speak of algebras over every $O\in \cat B$, then $\int \Phi$ is encoded as algebras over a polynomial monad.

Let $T$ be an $I$-colored symmetric operad in a closed symmetric monoidal category $\M$, equipped with a morphism of operads $\phi: T \to SOp(J)$, where $SOp(J)$ is the $\Sigma$-free symmetric operad for $J$-colored symmetric operads. This morphism $\phi$ induces a restriction functor
$$\phi^*: \Alg_{SOp(J)}(\M) \to \Alg_T(\M).$$
This restriction functor allows us to talk about algebras of $O$, when $O$ is itself an algebra of $T$ \cite[Section 3.1]{companion}. The result is a functor 
$$\Phi:\Alg_{T}^{op}\to \Cat$$
which assigns to a $T$-algebra $O$ the category of $O$-algebras and to a morphism of $T$-algebras $f:O\to O'$ assigns the restriction functor $f^*: \Alg_{O'}(\M)\to \Alg_O(\M).$ We can form now the Grothendieck construction $\int \Phi.$ So, the objects of $\int \Phi$ are pairs $(O,A)$ where $O\in \Alg_T(\M)$ and $A\in \Alg_O(\M).$ A morphism $$(O,A)\to (O',A')$$ is a pair $(f,\alpha)$ where $f:O\to O'$ and $\alpha:A\to f^*(A').$ 

Let now $\M$ be a symmetric monoidal model category. We will call a morphism $(f,\phi)$ in $\int \Phi $ a weak equivalence (fibration) if the underlying morphisms of $J$-collections $U(f)$ and $I$-collections $U(\phi)$ are pointwise weak equivalences (fibrations).
We will say that $\int \Phi$ admits the {\em global model structure} if there is a model structure on $\int \Phi$ with weak equivalences and fibrations defined as above \cite{harpaz-prasma-integrated}. The following is proven in \cite[Proposition 3.2]{companion}.

\begin{proposition} \label{prop:poly-for-Gr}
If $\phi$ is a morphism of polynomial monads in $Set$ then there exists a polynomial monad $Gr(T)$ such that the category $\int \Phi $ is isomorphic to the category $\Alg_{Gr(T)}(\M).$ 
\end{proposition}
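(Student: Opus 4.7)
The plan is to construct the polynomial monad $Gr(T)$ explicitly and identify its category of algebras with $\int\Phi$.

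Write the polynomial for $T$ as
\[
I \leftarrow E_T \xrightarrow{p_T} B_T \xrightarrow{t_T} I
\]
and the polynomial for $SOp(J)$ as $X \leftarrow E_S \to B_S \to X$, where $X$ is the color set of $SOp(J)$ (the set of $J$-profiles). The morphism $\phi$ provides a color map $\phi_0: I \to X$ and, over each $b\in B_T$, a signature $(j_1(b),\dots,j_{k(b)}(b); j_0(b))$ recorded by $\phi(b) \in B_S$.

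First I would take the color set of $Gr(T)$ to be $I \sqcup J$ and write $B_{Gr(T)} = B_T \sqcup B_T^{\mathrm{act}}$, with $B_T^{\mathrm{act}}$ a second copy of $B_T$ indexing the action operations. On the summand $B_T$, the polynomial data is that of $T$ with colors viewed in $I \subseteq I \sqcup J$. For $b \in B_T^{\mathrm{act}}$, the output color is $j_0(b) \in J$; the inputs comprise one $I$-colored input of color $t_T(b)$ (encoding the choice of operation of $O$ to be applied) together with $J$-colored inputs $j_1(b),\dots,j_{k(b)}(b)$. The finite-fiber condition on the middle map is inherited from the analogous conditions for $T$ and $SOp(J)$.

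Next I would equip this polynomial with a monad structure. On the $B_T$ summand, the unit and multiplication are those of $T$. Multiplication involving action operations splits into two cases: grafting an operation in $B_T$ into the $I$-colored input of an action operation (dictated by the multiplication of $T$), and grafting action operations into the $J$-colored inputs of another action operation (dictated, via $\phi$, by the multiplication of $SOp(J)$). Associativity and unitality of the combined structure are precisely what the hypothesis that $\phi$ is a morphism of polynomial monads guarantees. An algebra of $Gr(T)$ in $\M$ then consists of an $(I\sqcup J)$-indexed collection $(O,A)$ together with structure maps: those indexed by $B_T$ make $O$ a $T$-algebra, while those indexed by $B_T^{\mathrm{act}}$ make $A$ an algebra over the $J$-colored symmetric operad $\phi_*(O)$, i.e.\ an $O$-algebra. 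Morphisms correspond to pairs $(f,\alpha)$ as in the definition of $\int\Phi$, yielding the asserted isomorphism.

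The main obstacle is the bookkeeping needed to verify associativity of the multiplication and to check that the assembled functor is genuinely polynomial in the sense of Kock, including the Beck--Chevalley compatibilities needed for a decomposition $t_! p_* s^*$. Passing through the equivalence between polynomial monads in $\Set$ and $\Sigma$-free colored operads \cite{batanin-kock-weber}, this reduces to constructing a $\Sigma$-free $(I\sqcup J)$-colored operad with the operations described above, which is formally routine though notationally heavy.
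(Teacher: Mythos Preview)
Your overall architecture---color set $I\sqcup J$, operations split as $T$-operations plus ``action'' operations, then identifying algebras with pairs $(O,A)$---matches the paper's. But there is a genuine gap in your explicit polynomial, and it prevents the monad structure from existing.

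You declare that each action operation indexed by $b\in B_T^{\mathrm{act}}$ has \emph{one} $I$-colored input, of color $t_T(b)$. Now try to perform either of the two multiplications you describe. If you graft a $T$-operation $b'$ with $m$ sources into that single $I$-input, the composite has $m$ $I$-inputs; unless $m=1$ this is not of the shape you allowed. Likewise, grafting one action operation into a $J$-input of another produces an operation with two $I$-inputs. So $B_T\sqcup B_T^{\mathrm{act}}$ is not closed under composition and you do not get a polynomial monad. The phrase ``dictated by the multiplication of $T$'' cannot repair this: the $T$-multiplication composes along \emph{sources} of an outer $T$-operation, whereas here you are feeding the \emph{target} of $b'$ into the action operation, so there is no way to collapse the $m$ new $I$-inputs back to one.

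The paper's construction (stated after the proposition and proved in the companion paper) fixes exactly this point: the action operation attached to $b$ carries one $I$-input for \emph{each source of $b$}---equivalently, one for each vertex of the tree $\psi(b)\in ORTr(J)$, since a morphism of polynomial monads forces a bijection between the sources of $b$ and the vertices of $\psi(b)$---together with $J$-inputs given by the leaves of $\psi(b)$ and $J$-output the root. With that arity, grafting $T$-operations into the $I$-inputs is governed by the $T$-multiplication, grafting action operations into $J$-inputs is governed by tree substitution in $SOp(J)$, and the set of operations is genuinely closed. Your sentence ``the signature $(j_1(b),\dots,j_{k(b)}(b);j_0(b))$ recorded by $\phi(b)$'' suggests you are reading $\phi(b)$ as a corolla rather than a general tree; that is where the single-$I$-input error originates.
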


Let $ORTr(J)$ denote the set of ordered rooted trees whose edges are decorated by $J$. For completeness, we point out that, if $T$ is represented by a polynomial $I \leftarrow E \to B \to I$ and $\phi$ is given by a morphism of polynomials which includes a morphism $\psi: B \to ORTr(J)$, then the polynomial monad for $Gr(T)$ is:
$$\xymatrix{I\sqcup J &\ar[l]_{\xi}D^*\ar[r]^{\pi}&D\ar[r]^{\tau}&I\sqcup J}$$

where $D$ is the coproduct $$B \sqcup B = B \sqcup \{(b,\sigma)\ | \ \sigma \in ORTr(J), b\in \psi^{-1}(\sigma)\},$$ 
the target of an element $b\in B$ is $t(b)$, the target of $(b,\sigma)$ is the target of $\sigma$, and the set $D^*$ is the set of elements of $D$ with a marked source in the usual way. For full details, we refer the reader to \cite[Section 3.1]{companion}.	

\subsection{Global admissibility}

The following theorem is proven in \cite{companion} as Theorems 5.1 and 6.1. We state it here so that we can deduce consequences from it using the results below. For model-categorical terminology, we refer the reader to \cite{batanin-berger, companion}.

\begin{theorem} \label{thm:global-model-examples}
Assume $\M$ is a compactly generated monoidal model category satisfying the monoid axiom. Then the following global model structures exist:
\begin{enumerate}
\item The category of pairs $(R,M)$ where $R$ is a monoid (or, more generally, an $A$-algebra for a commutative monoid $A$) and $M$ is an $R$-module (left, right, or bimodule).
\item The category of pairs $(O,A)$ where $O$ is a non-symmetric operad and $A$ is an $O$-algebra.
\item The category of pairs $(O,M)$ where $O$ is a non-symmetric operad and $M$ is a left $O$-module.
\item The category of pairs $((O,P),M)$ where $O$ and $P$ are non-symmetric operads, and $M$ is an $O-P$-bimodule.
\item The category of pairs $(O,M)$ where $O$ is a constant-free symmetric operad (meaning $P(0)=\emptyset$ is the initial object of $\M$) and $M$ is a constant-free left $O$-module.
\item The category of pairs $(O,M)$ where $O$ is a constant-free $n$-operad and $M$ is a constant-free left $O$-module.
\item The category of pairs $(R,M)$ where $R$ is a commutative monoid and $M$ is an $R$-module, if we assume $\M$ additionally satisfies the commutative monoid axiom \cite{white-commutative-monoids}.
\end{enumerate}

If $\M$ is $h$-monoidal then these model structures are relatively left proper. If $\M$ is strongly $h$-monoidal then they are left proper.
\end{theorem}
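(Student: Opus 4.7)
The plan is to treat each case by identifying the relevant Grothendieck construction with the category of algebras over a polynomial monad $Gr(T)$, via Proposition \ref{prop:poly-for-Gr}, and then apply the general transfer machinery for (quasi-)tame polynomial monads developed in \cite{batanin-berger} and \cite{companion}. Cases (1)--(6) all fit this pattern: for each base polynomial monad $T$ (monoids, non-symmetric operads, non-symmetric $n$-operads, constant-free symmetric operads, etc.), one exhibits the canonical morphism $\phi: T \to SOp(J)$ (or its non-symmetric/planar analogue) describing how a $T$-algebra $O$ admits a notion of $O$-algebra, then applies Proposition \ref{prop:poly-for-Gr} to produce $Gr(T)$ with polynomial representation $I \sqcup J \leftarrow D^* \to D \to I \sqcup J$, where $D = B \sqcup \{(b,\sigma) \mid \sigma \in ORTr(J),\, b \in \psi^{-1}(\sigma)\}$.

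The main technical step is to verify that each $Gr(T)$ arising in (1)--(6) is quasi-tame in the sense of \cite{companion}, so that Theorem 4.17 of \cite{companion} (or equivalently Theorem 8.1 of \cite{batanin-berger} in the tame case) applies to produce the transferred global model structure on $\Alg_{Gr(T)}(\M) \cong \int \Phi$. Concretely, for the non-symmetric cases (1)--(4), the underlying $T$ is already tame, and one checks that adjoining the ordered-tree fibers preserves tameness because the decoration by $J$ respects the linear/planar structure. For the constant-free cases (5) and (6), tameness typically fails, but quasi-tameness still holds once nullary operations are excluded; this is exactly the combinatorial condition that eliminates the non-contractible components of the internal algebra classifier. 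I would then invoke the monoid axiom on $\M$ to ensure the required pushouts along generating trivial cofibrations remain weak equivalences, as in \cite[Theorem 6.3.1]{white-yau}.

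Case (7) does not fit the polynomial monad framework, so the plan is to argue separately: first obtain the global structure on pairs $(R,M)$ by combining the commutative monoid axiom of \cite{white-commutative-monoids} (which gives the $R$-part) with the fact that for a fixed commutative $R$, the category of $R$-modules is the category of algebras over a polynomial monad obtained by base change, to which the transfer machinery applies. Gluing these via the Grothendieck construction and running the filtration analysis from \cite[Corollary 3.8]{white-commutative-monoids} on the whole pair produces the global model structure.

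Finally, for the properness claims, I would feed the $h$-monoidality hypothesis into the filtration quotients produced by \cite[Proposition 4.6]{companion}: strong $h$-monoidality ensures each stage of the filtration is a weak equivalence after pushout along arbitrary cofibrations, yielding full left properness, while $h$-monoidality only gives this when both source and target forget to cofibrant objects in $\M$, yielding relative left properness in the sense of Definition \ref{defn:relatively-left-proper}. The main obstacle I anticipate is checking quasi-tameness of $Gr(T)$ uniformly across cases (5) and (6): the interaction between the constant-free condition on $O$ and the decorating trees $\sigma$ forces a careful combinatorial analysis of the fibers of $\pi: D^* \to D$ to ensure the relevant classifier is contractible.
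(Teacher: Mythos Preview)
Your approach is essentially the one taken in the companion paper \cite{companion}, which is where this theorem is actually proved (the present paper only states it, citing Theorems 5.1 and 6.1 of \cite{companion}). The strategy you outline---realize each $\int\Phi$ as $\Alg_{Gr(T)}(\M)$ via Proposition \ref{prop:poly-for-Gr}, verify quasi-tameness of $Gr(T)$, then invoke \cite[Theorem 4.17]{companion} together with the monoid axiom, and use (strong) $h$-monoidality for the properness claims---is exactly right.

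One small correction: for case (5), the paper explicitly notes that the theory of \emph{substitudes} is required (see the sentence immediately following the theorem statement and \cite{Reedy-paper}), not merely polynomial monads. Constant-free symmetric operads and their modules are encoded by a substitude rather than an ordinary polynomial monad, and the quasi-tameness check in \cite[Proposition 4.25]{companion} is carried out in that setting. Your sketch treats (5) and (6) as straightforward polynomial-monad instances, which is not quite accurate; the combinatorial analysis you anticipate as an obstacle is real, but it lives in the substitude framework rather than the polynomial one. Otherwise your plan, including the separate treatment of (7) via the commutative monoid axiom, matches the intended argument.
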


We note that example (5) above requires the theory of substitudes and is related to the Baez-Dolan stabilization hypothesis \cite{Reedy-paper}.

The conditions of this theorem are satisfied by a wide range of model categories, documented in \cite{batanin-berger}, \cite{white-localization}, \cite{white-yau}, \cite{white-yau2}, including simplicial sets, spaces, spectra, equivariant and motivic spectra, chain complexes, the stable module category, the category of small categories, simplicial presheaves, and various models for categories of weak $n$-categories.

There are many more examples where $\int \Phi$ admits a full global model structure, and we cover a few in Section \ref{sec:global-model}. Furthermore, in \cite[Section 6]{companion} we obtain left properness and relative left properness results for the model categories of commutative monoids and for $\int \Phi$ in this setting, using the machinery of Section \ref{sec:global-model}.

When $\cat{B}$ is the category of non-symmetric operads, there cannot be a transferred model structure on $\int \Phi$ in general, because, as we will see, this would imply the existence of a transferred model structure on $\Phi(O)$ for every $O$, including $O = Com$ for which counterexamples are well-known \cite{white-commutative-monoids}. Nevertheless, we have a semi-model structure by \cite[Theorem 6.3.1]{white-yau} and we will investigate the consequences of this in the next section.

\section{The global (semi-)model structure} \label{sec:global-model}

In this section, we will prove that, if $\int \Phi$ admits a transferred model structure, then the same is true for the base $\cat B$ and the fibers $\Phi(O)$. In fact, we will prove much more. Motivated by applications to the homotopy theory of operads, we prove that properness of $\int \Phi$ implies properness of the model categories $\cat B$ and $\Phi(O)$, we deduce a rectification result, and we prove analogous results for when $\int \Phi$ is only a semi-model structure. We will see in Section \ref{sec:non-tame} that the semi-model categorical case is unavoidable.

Let $\cat{B}$ be a category and 
$$\Phi:\cat{B}^{op} \to CAT$$
be a functor.

\begin{definition} \label{defn:homotopically-structured}.
We say $\Phi$ is {\it homotopically structured} if $\cat{B}$ is equipped with two classes of morphisms called {\it horizontal} weak equivalences and fibrations, and for each $O\in \cat{B}$ the category $\Phi(O)$ is also equipped with classes of weak equivalences and fibrations called {\it vertical}.  We will call a morphism $(\phi,f)$ in $\int\Phi$ a {\it global weak equivalence (fibration)} if $\phi$ is a horizontal and $f$ is a vertical weak equivalence (fibration). We also define a class of {\it global cofibrations} by the left lifting property with respect to trivial global fibrations.
\end{definition} 

A {\it global trivial cofibration} is a global cofibration that is also a global weak equivalence. Note that we do not know in general that global trivial cofibrations are characterized by the left lifting property with respect to global fibrations.

We will say that $\int \Phi$ admits a {\it global} (semi-)model structure if there is a (semi-)model structure on $\int\Phi$ with weak equivalences, fibrations, and cofibrations defined as above. It also may happen that there is a model structure on a given category $\Phi(O)$ where weak equivalences and fibrations are vertical, and vertical cofibrations are defined by the left lifting property.  We call such a model structure {\it vertical}. One can also have a {\it horizontal} model structure on $\cat{B}$ using horizontal weak equivalences, fibrations, and cofibrations. 

\subsection{From global to horizontal and vertical}

In \cite{companion}, we produce global (semi-) model structures in very general contexts (see Theorem \ref{thm:global-model-examples}). We will use the following results to deduce the existence of horizontal and vertical (semi-)model structures. We begin with a characterization of the global (trivial) cofibrations.

\begin{lemma} \label{lemma:cof-of-pairs}
Suppose $\Phi$ is homotopically structured and $(\phi,f):(O,A)\to (O',A')$ is a global (trivial) cofibration. Then $\phi$ is a horizontal (trivial) cofibration. Suppose $(id,f):(O,A)\to (O,A')$ is a (trivial) cofibration of pairs. Then $f:A\to A'$ is a vertical (trivial) cofibration.
\end{lemma}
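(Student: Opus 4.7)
Both statements are proved by exploiting the component structure of morphisms in $\int \Phi$ together with the adjunctions of Figure~(\ref{figure:adjoints}). For the first claim, my plan is to use the right adjoint $r : \cat{B} \to \int \Phi$ sending $X$ to $(X, t_X)$. Given a horizontal (trivial) fibration $\psi : X \to Y$, the morphism $r(\psi)$ has first component $\psi$ and second component the canonical map $t_X \to \psi^*(t_Y)$; because $\psi^*$ is a right adjoint and hence preserves terminal objects, this canonical map is $id_{t_X}$, so $r(\psi)$ is a (trivial) global fibration (identities being trivial fibrations in any reasonable vertical structure). A horizontal lifting problem for $\phi$ against $\psi$ corresponds, under the $p \dashv r$ adjunction, to a global lifting problem for $(\phi, f)$ against $r(\psi)$, which by hypothesis has a solution; its first component is the required horizontal lift. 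The weak equivalence statement is immediate, since the horizontal component of a global weak equivalence is by definition a horizontal weak equivalence.

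For the second claim, no adjoint is needed; instead one fixes the base component at $id_O$ and embeds the vertical lifting problem directly into the Grothendieck construction. Given a vertical (trivial) fibration $q : X \to Y$ in $\Phi(O)$, the pair $(id_O, q) : (O, X) \to (O, Y)$ is a (trivial) global fibration, provided $id_O$ is a (trivial) horizontal fibration---a minimal sanity assumption on the classes of horizontal morphisms. A vertical lifting problem for $f : A \to A'$ against $q$ then extends to a global lifting problem for $(id_O, f)$ against $(id_O, q)$ with all base components equal to $id_O$. By hypothesis this global problem has a solution $(\ell_1, \ell_2)$, and the composition rule in $\int \Phi$ forces $\ell_1 = id_O$, so $\ell_2$ is the required vertical lift of $f$ against $q$. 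Again, the weak equivalence portion reduces to the componentwise definition.

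The only real obstacle I foresee is the implicit assumption that each fiber has a terminal object, which is what makes $r$ well-defined in the preliminaries. This should be treated as a standing hypothesis; if it fails, one could instead dualize the first part to use the left adjoint $i$ (requiring initial objects in each fiber) together with the $p$-preservation-of-cofibrations side of the story, or construct ad hoc global lifting problems. Aside from this, everything reduces to formal manipulation of adjoints and of morphisms in the Grothendieck construction.
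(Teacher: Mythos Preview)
Your proposal is correct and takes essentially the same approach as the paper. For the first claim the paper embeds the horizontal lifting problem into $\int\Phi$ using terminal objects $(O_1,t_1)\to(O_2,t_2)$ exactly as you do via $r$, and for the second it embeds the vertical problem using $(id_O,g)$; your explicit invocation of the $p\dashv r$ adjunction and your remark that the base component of the lift is forced to be $id_O$ are just slightly more articulate versions of what the paper does implicitly, and your flagged assumptions (terminal objects in fibers, identities being trivial fibrations) are indeed used tacitly in the paper as well.
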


\begin{proof}
Let $(\phi,f):(O,A)\to (O',A')$ be a global cofibration.
Suppose $\beta:O_1\to O_2$ is a horizontal trivial fibration, and we are given a lifting problem:
\[
\xymatrix{
O\ar[r] \ar[d]_\phi & O_1\ar[d]^\beta \\
O' \ar[r] & O_2
}
\]
Consider the global trivial fibration $(\beta,t):(O_1,t_1)\to (O_2,t_2)$ where $t_j$ refers to the terminal object in $\Phi(O_j)$. Consider the lifting problem: 
\[
\xymatrix{
(O,A) \ar[r] \ar[d]_{(\phi,f)} & (O_1,t_1) \ar[d]^{(\beta,t)} \\
(O',A')\ar[r] & (O_2,t_2)
}
\]
Then the first component of the lift $(O',A') \to (O_1,t_1)$ demonstrates that $O\to O'$ has the right lifting property in the horizontal structure, and hence is a horizontal cofibration. If $(\phi,f)$ is also a weak equivalence, then so is $\phi$, by Definition \ref{defn:homotopically-structured}.

Next, let $(id_O,f)$ be a global cofibration and suppose $g:X\to Y$ is a vertical trivial fibration in $\Phi(O)$. Then $(id_O,g):(O,X)\to (O,Y)$ is a global trivial fibration. The lift of $(\phi,f)$ against $(id_O,g)$ gives a lift of $f$ in $\Phi(O)$, proving that $f$ is a vertical cofibration. If $(id_O,f)$ is also a weak equivalence, then so is $f$, by Definition \ref{defn:homotopically-structured}.

\end{proof}

The following characterization of the global (trivial) cofibrations is related to a result (with different conditions) in \cite{stanculescu}. This characterization of global cofibrations is also used in \cite{cagne-mellies}, in the situation where $\cat B$ and every fiber $\Phi(O)$ have model structures, under the name `total cofibrations.'

\begin{proposition} \label{prop:stanculescu}
Suppose $\Phi$ is homotopically structured (resp. admits the global model structure) and assume, for every $\gamma$ in $\cat{B}$, that $\gamma^*$ preserves fibrations and trivial fibrations. 
A morphism $(\phi,f): (O,A)\to (P,B)$ is a global cofibration (resp. trivial cofibration) if and only if $\phi$ is a horizontal (trivial) cofibration and ${}^\dashv f:\phi_!(A) \to B$ is a vertical (trivial) cofibration.
\end{proposition}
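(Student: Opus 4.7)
The plan is to use the adjunction $\phi_! \dashv \phi^*$ to transport lifting problems between $\int \Phi$ and the individual fibers, together with the hypothesis that each $\gamma^*$ preserves fibrations and trivial fibrations.

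For the forward direction, Lemma \ref{lemma:cof-of-pairs} immediately yields that $\phi$ is a horizontal (trivial) cofibration, so it suffices to show that ${}^\dashv f$ is a vertical (trivial) cofibration. I would test ${}^\dashv f$ against an arbitrary vertical (trivial) fibration $g\colon X\to Y$ in $\Phi(P)$. The key observation is that $(id_P, g)\colon (P, X)\to (P, Y)$ is then a global (trivial) fibration, since its horizontal component is an identity. Via the adjunction $\phi_! \dashv \phi^*$, a lifting problem in $\Phi(P)$ for ${}^\dashv f$ against $g$ transforms square by square into one in $\int \Phi$ for $(\phi, f)$ against $(id_P, g)$; the resulting global lift is forced to have first component $id_P$ by the lower commuting triangle, and its second component descends to the desired lift in $\Phi(P)$.

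For the backward direction, I would start with a lifting problem against a global (trivial) fibration $(\psi, g)\colon (X_1, Y_1)\to (X_2, Y_2)$, with top morphism $(\phi_1, k_1)$ and bottom morphism $(\phi_2, k_2)$, and construct the lift $(\phi_3, k_3)$ in two stages. First, $\phi$ lifts against $\psi$ (using that $\phi$ is a horizontal (trivial) cofibration) to produce $\phi_3\colon P\to X_1$ with $\phi_3 \circ \phi = \phi_1$ and $\psi \circ \phi_3 = \phi_2$. Second, the preservation hypothesis ensures that $\phi_3^*(g)\colon \phi_3^*(Y_1)\to \phi_2^*(Y_2)$ is a vertical (trivial) fibration in $\Phi(P)$. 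The morphism $k_1$ is the $\phi_! \dashv \phi^*$ adjoint of some $\tilde{k}_1\colon \phi_!(A)\to \phi_3^*(Y_1)$, so I would assemble a lifting problem in $\Phi(P)$ for ${}^\dashv f$ against $\phi_3^*(g)$ whose commutativity follows from that of the ambient square in $\int \Phi$; applying the hypothesis on ${}^\dashv f$ yields the required $k_3$.

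The main obstacle will be the bookkeeping needed to verify that the adjunction correspondence respects the commutativity of the relevant squares and the lifting conditions. Unpacking the composition rule $(\psi, g)\circ (\phi, f) = (\psi\circ \phi,\, \phi^*(g)\circ f)$ and repeatedly using the unit $\eta_A\colon A\to \phi^*\phi_!(A)$, one checks that both the top-right and left-bottom composites of the ambient square translate exactly into the corresponding composites of the adjoint square in the fiber, and analogously for the lift. Once this translation is in hand, both directions reduce to a single diagram chase.
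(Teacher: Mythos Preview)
Your proposal is correct and follows essentially the same approach as the paper: in both directions you first handle the horizontal component (via Lemma \ref{lemma:cof-of-pairs} or by lifting $\phi$ against $\psi$), then set up a vertical lifting problem in $\Phi(P)$ by transporting the remaining data through the $\phi_!\dashv\phi^*$ adjunction and invoking the hypothesis that $\gamma^*$ preserves (trivial) fibrations. The paper organizes the diagrams slightly differently (introducing the unit $\epsilon:A\to\phi^*\phi_!A$ explicitly in the forward direction), but the underlying argument is the same.
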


\begin{proof}
Suppose $\phi: {O} \to {P}$ is a horizontal cofibration and $\phi_!(A) \to B$ is a vertical cofibration in ${P}$. In order to show $(\phi,f)$ is a global cofibration, we proceed via lifting. Consider the following lifting diagram, where the labels $\alpha$ and $\xi$ denote horizontal morphisms only:
\begin{align} \label{diagram:stan}
\xymatrix{
({O},A) \ar[r]^{(\beta,b)} \ar[d]_{(\phi,f)} & ({Q},C) \ar@{->>}[d]^\alpha \\
({P},B) \ar[r]_{\xi} \ar@{..>}[ur]^\gamma & ({Z},D)
}
\end{align}

Because horizontal trivial fibrations are inherited from global ones, there is a horizontal lift $\gamma:{P} \to {Q}$. Next, we obtain a vertical lift by setting up the following lifting problem in $\Phi({P})$, using that $\xi^*(D) \cong \gamma^*(\alpha^*(D))$:
\begin{align} \label{diagram:stan2}
\xymatrix{
\phi_!(A) \ar[r] \ar[d] & \gamma^*(C) \ar[d]\\
B \ar[r] & \gamma^*(\alpha^*(D))
}
\end{align}

Since $\gamma^*$ preserves trivial fibrations, the right vertical morphism is a trivial fibration, and the left vertical morphism is a cofibration by hypothesis. The lift $g:B\to \gamma^*(C)$ completes the global lift in (\ref{diagram:stan}), and proves $(\phi,f)$ is a global cofibration. Commutativity of the lower triangle in (\ref{diagram:stan}) is commutativity of the lower triangle in (\ref{diagram:stan2}). Commutativity of the upper triangle in (\ref{diagram:stan}) requires that $b=g\circ f: A \to \phi^*(B) \to \phi^*\gamma^*(C)$. This is true because the adjoint diagram (upper triangle of (\ref{diagram:stan2})) commutes.

Conversely, suppose $(\phi,f)$ is a global cofibration. By Lemma \ref{lemma:cof-of-pairs}, $\phi$ is a horizontal cofibration. To prove $\phi_!(A)\to B$ is a vertical cofibration in $\Phi({P})$, set up a lifting problem
\begin{align} \label{diagram:stan3}
\xymatrix{
\phi_!(A) \ar[r] \ar[d] & X \ar@{->>}[d]\\
B \ar[r] & Y
}
\end{align}

Convert this into a global lifting problem and introduce $({O},A)$ via the unit morphism $\epsilon:A\to \phi^* \phi_!(A)$

\begin{align} \label{diagram:stan4}
\xymatrix{
({O},A) \ar[r]^{(\phi,\epsilon)} \ar@{^(->}[dr]_{(\phi,f)} & ({P},\phi_!(A)) \ar[r] \ar[d] & ({P},X) \ar@{->>}[d]\\
& ({P},B) \ar[r] & ({P},Y)
}
\end{align}

Since $(\phi,f)$ is a cofibration, there is a lift $({P},B) \to ({P},X)$ making the lower triangle and left trapezoid commute. It is easy to see that this lift also makes the middle triangle commute, since the following triangle commutes in $\Phi(P)$:

\[
\xymatrix{
\phi_!(A) \ar[r] \ar[d]^{{}^\dashv f} & X \\
B \ar[ur] & \\
}
\]
where ${}^\dashv f$ is the mate of $f$.

In case $\int \Phi$ admits the global model structure, the same argument applies to trivial cofibrations, which are then characterized by the left lifting property with respect to fibrations. If $\int \Phi$ admits only the global semi-model structure, then this argument only works when the domain of $(\phi,f)$ is cofibrant.
\end{proof}

\begin{remark}
In the proof of Lemma \ref{lemma:cof-of-pairs}, we were able to deduce the trivial cofibration case from the cofibration case and the definition of weak equivalences. Similarly, in the proof above, if $(\phi,f)$ is a global trivial cofibration, we do know that $\phi$ is a horizontal trivial cofibration, even without a semi-model structure on $\int \Phi$. We also know that $f: A\to \phi^*(A')$ is a weak equivalence. But this does not tell us in general that ${}^\dashv f: \phi_!(A) \to A'$ is a weak equivalence.
\end{remark}

\begin{remark}
When both Stanculescu's model structure \cite{stanculescu} on $\int \Phi$, and our transferred model structure on $\int \Phi$ exist, they must coincide, because they have the same fibrations and weak equivalences. Similarly, thanks to Proposition \ref{prop:stanculescu}, our global model structure also agrees with the `total model structure' of \cite{cagne-mellies}, since they have the same fibrations and cofibrations. In the case when the global model structure exists, the characterization of the cofibrations in \ref{prop:stanculescu} was already known to Stanculescu. We now compare the two approaches. First, Stanculescu's requirement, that $p:\int \Phi \to \cat{B}$ be a cloven bifibration, is always true in our transferred situations. However, we do not require the existence of model structures on the base and fibers, since our approach is the opposite. Lastly, we do not require that the functors $\phi^*$ reflect weak equivalences. Our characterization of the cofibrations does not require a model structure (or even a semi-model structure) on $\int \Phi$. 
\end{remark}

\begin{remark}
It is a consequence of Proposition \ref{prop:stanculescu} that $(O,A)$ is cofibrant if and only if $O$ is cofibrant in $\cat B$ and $A$ is cofibrant in $\Phi(O)$.
\end{remark}

Having characterized the global (trivial) cofibrations, we are ready to prove our main result that will allow us to deduce vertical and horizontal (semi-)model structures from the global (semi-)model structure.

 \begin{proposition}\label{prop:globaltovertical} Suppose that 
$\Phi$ is homotopically structured and, for every $\alpha:O \to O'$ in $\cat{B}$, the functor $\alpha^*$ preserves vertical weak equivalences and fibrations.
Then
\begin{itemize} 
\item[(i)] If a vertical morphism $(id,f):(O,X) \to (O,Y)$  satisfies the left lifting property with respect to  global (trivial) fibrations then it satisfies the left lifting property with respect to vertical (trivial) fibrations.

\item[(ii)] If $(id,f):(O,X)\to (O,Y)$ is a vertical (trivial) cofibration, then it is also a global (trivial) cofibration. If $\alpha:O_1 \to O_2$ is a horizontal cofibration, then the unique induced morphism $(\alpha,a):(O_1,i_1)\to (O_2,i_2)$ is a global cofibration, where $i_1$ and $i_2$ are initial objects. If $\int \Phi$ admits the global model structure (resp. semi-model structure) then the second statement remains true for trivial cofibrations (resp. trivial cofibrations with cofibrant domain).

\item[(iii)] If for a vertical morphism $(O,A)\to (O,D)$ there exists a   global factorization into a  global cofibration followed by a global trivial fibration then such a  factorization exists vertically.

 \item[(iv)] If for a vertical morphism $(O,A)\to (O,D)$ there exists a   global factorization into a global trivial cofibration followed by a global   fibration then such a   factorization exists vertically. 
 \end{itemize}
 
 \end{proposition}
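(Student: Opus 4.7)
The plan is to establish the four parts in order, using the characterization of global (trivial) cofibrations from Proposition~\ref{prop:stanculescu} as the main technical device, together with direct lifting arguments that promote vertical problems to global ones.

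For (i), given a vertical (trivial) fibration $q\colon P\to Q$ in $\Phi(O)$, I would promote it to the global (trivial) fibration $(id_O, q)\colon (O,P)\to (O,Q)$, which is global by Definition~\ref{defn:homotopically-structured} since identities are horizontal (trivial) fibrations. A vertical lifting problem then becomes a global one; the hypothesis provides a global lift $(\mu,\ell)\colon (O,Y)\to (O,P)$, and commutativity of the triangles in $\int\Phi$ forces $\mu=id_O$, extracting $\ell$ as the desired vertical lift.

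For (ii), the first claim is immediate from Proposition~\ref{prop:stanculescu}: for $(id,f)$ the horizontal condition is automatic and $id_!(X)\to Y$ is exactly $f$, so $f$ being a vertical (trivial) cofibration gives $(id,f)$ a global one. For the second claim I would apply Proposition~\ref{prop:stanculescu} with $A=i_1$ and $B=i_2$; since $\alpha_!$ is a left adjoint it preserves initial objects, so $\alpha_!(i_1)\to i_2$ is an isomorphism and a fortiori a vertical (trivial) cofibration. The trivial cofibration addendum in the semi-model case follows by applying the trivial cofibration half of Proposition~\ref{prop:stanculescu}, whose cofibrant-domain requirement is satisfied since initial objects are always cofibrant in the fibers and $O_1$ is cofibrant by assumption.

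For (iii) and (iv), given the global factorization $(O,A)\xrightarrow{(\phi,f)}(P,C)\xrightarrow{(\psi,g)}(O,D)$ with $\psi\phi=id_O$, I would propose the vertical factorization
\[
A\xrightarrow{f}\phi^*(C)\xrightarrow{\phi^*(g)}D,
\]
which composes to the original $\iota$ via $(\psi,g)\circ(\phi,f)=(\psi\phi,\phi^*(g)\circ f)=(id,\iota)$. The map $\phi^*(g)$ is a vertical (trivial) fibration since $\phi^*$ preserves vertical weak equivalences and fibrations. To show $f$ is a vertical (trivial) cofibration, I would first observe that the section-retraction structure, together with two-out-of-three for horizontal weak equivalences (inherited from the global structure by testing against morphisms $(\phi_i,id)$ with initial second components), forces $\phi$ to be a horizontal trivial cofibration in case (iii), while in case (iv) it is already one by hypothesis. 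By part (i) it suffices to prove that $(id,f)\colon(O,A)\to(O,\phi^*(C))$ has the left lifting property against global (trivial) fibrations; I would establish this by converting a global lifting problem into one against $(\phi,f)$ using the cartesian factorization $(\phi,f)=(\phi,id_{\phi^*(C)})\circ(id,f)$ and the identity $\phi^*\psi^*=id_{\Phi(O)}$ to produce a compatible bottom arrow via $\psi$.

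The main obstacle is precisely this verification in (iii) and (iv): Proposition~\ref{prop:stanculescu} identifies global cofibrancy of $(\phi,f)$ with the mate $\phi_!(A)\to C$ being a vertical cofibration, which is \emph{not} the same as $f\colon A\to\phi^*(C)$ being a vertical cofibration, since the right adjoint $\phi^*$ need not preserve cofibrations. Bridging this gap requires combining the horizontal trivial cofibration property of $\phi$ with the preservation of fibrations by $\psi^*$ to engineer the bottom arrow $(\psi,\nu)$ of the global lifting diagram so that the retraction identity $\phi^*\psi^*=id$ forces commutativity with the given data.
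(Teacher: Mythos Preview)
Your overall strategy matches the paper's: part (i) by promoting vertical fibrations to global ones, part (ii) via the characterization of global cofibrations, and parts (iii)--(iv) by proposing the vertical factorization $A\xrightarrow{f}\phi^*(C)\xrightarrow{\phi^*(g)}D$ and then reducing the cofibrancy of $f$ to a lifting argument that exploits the section--retraction identity $\psi\phi=id_O$. The paper does essentially the same thing, carrying out the lifting argument against a vertical trivial fibration directly rather than detouring through part (i).

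There are, however, two genuine issues. First, in the trivial-cofibration case of the \emph{first} claim of (ii), you invoke Proposition~\ref{prop:stanculescu}, but its trivial-cofibration half is only stated under the assumption that $\int\Phi$ admits the global model structure, which is not hypothesized here. The fix is immediate (and is what the paper does): once you know $(id,f)$ is a global cofibration, observe directly that it is a global weak equivalence since $id_O$ is a horizontal weak equivalence and $f$ is a vertical one. Second, your plan for (iv) is to show $(id,f)$ has the left lifting property against global \emph{fibrations}, then invoke (i). But this is both too strong and insufficient: too strong because the global trivial cofibration $(\phi,f)$ is not known to lift against arbitrary global fibrations absent a model structure, and insufficient because having the LLP against vertical fibrations does not by itself make $f$ a vertical \emph{trivial} cofibration (which means vertical cofibration plus vertical weak equivalence). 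The paper's route is cleaner: reuse the argument from (iii) to show $f$ is a vertical cofibration (this only needs $(\phi,f)$ to be a global cofibration), then observe that $(\phi,id)$ is a global weak equivalence and apply two-out-of-three to $(\phi,f)=(\phi,id)\circ(id,f)$ to conclude $f$ is a vertical weak equivalence. Your observation that $\phi$ is a horizontal trivial cofibration in case (iii) is not actually used in either argument and relies on two-out-of-three for horizontal weak equivalences, which is not part of the definition of a homotopical structure.
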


\begin{proof} 
Point (i) is trivial, since any vertical (trivial) fibration $g:A\to B$ gives rise to a global (trivial) fibration $(O,A)\to (O,B)$, and a lift in $\int \Phi$ of $(id,f)$ against $(id,g)$ is forced to be the identity on the first component and the desired lift in $\Phi(O)$ on the second component. 

For the first part of (ii), suppose $(O',E)\to (O'',B)$ is a global trivial fibration  and that we are given the following lifting problem.
\begin{align*}
\xymatrix{(O,X) \ar[d] \ar[r]^{(\phi,g)} & (O',E) \ar[d]^{(\psi,p)} \\ (O,Y)\ar[r]_{(\delta,h)} & (O'',B)}
\end{align*}
Clearly the top horizontal morphism provides the first component $O\to O'$ of our desired lift. Since $(\psi,p)$ is a trivial fibration we know that $p:E\to \psi^*(B)$ is a trivial fibration in $\Phi(O')$. Apply $\phi^*$ to this morphism and observe that $\phi^*(\psi^*(B))\cong \delta^*(B)$ because of commutativity in the square above. Furthermore, $\phi^*$ preserves trivial fibrations by assumption. So we have a lifting diagram in $\Phi(O)$
\begin{align*}
\xymatrix{X \ar[d]_f \ar[r]^g & \phi^*(E) \ar[d] \\ Y\ar[r] & \delta^*(B)}
\end{align*}
in which the left vertical morphism is a vertical cofibration (a class of morphisms defined by the lifting property) and the right vertical morphism is a trivial fibration. So this diagram admits a lift $\gamma$. The pair $(\phi,\gamma)$ is a global lift in the original diagram. In case $(O,X)\to (O,Y)$ is a vertical trivial cofibration we first prove that it is  a global cofibration as above but it is also a weak equivalence (because it is the identity on the first component and a trivial cofibration on the second component), so we are done.  

For the second part of (ii), consider the following lifting problem in $\int \Phi$, where $\alpha:O_1\to O_2$ is a horizontal cofibration and $(\psi,p)$ is a trivial fibration
\begin{align} \label{diagram-pairs-pushout}
\xymatrix{(O_1,i_1) \ar[d] \ar[r]^{(\phi,g)} & (O',E) \ar[d]^{(\psi,p)} \\ 
(O_2,i_2)\ar[r]_{(\delta,h)} & (O'',B)}
\end{align}
Since $\psi:O' \to O''$ is a horizontal trivial fibration, and horizontal cofibrations are defined by lifting, there is a lift $\beta:O_2 \to O'$. Furthermore, $\beta^*(E)$ is an object of $\Phi(O_2)$, so there is a unique morphism from the initial object $i_2$ to $\beta^*(E)$, commuting with the unique morphism $i_1 \to \phi^*(E)$ in $\Phi(O_1)$. In the case where $\alpha$ is a trivial cofibration, the same proof demonstrates that $(\alpha,a)$ is a cofibration in $\int \Phi$. Since we do not know in general that $a$ is a weak equivalence, we invoke Proposition \ref{prop:stanculescu}, and for this we require $\int \Phi$ to admit the global (semi-)model structure.

For (iii), let $(O,A)\to (O,D)$ be a vertical morphism.
We factorize it globally as
$$(O,A)\stackrel{(\phi,f)}{\to} (O',A') \stackrel{(\psi,g)}{\to} (O, D)$$
where $(\phi,f)$ is a global cofibration and $(\psi,g)$ is a global trivial fibration.
We then have a vertical factorization:
$$(O,A)\stackrel{(id,f)}{\to} (O,\phi^*(A')) \stackrel{(id,\phi^*g)}{\to} (O, D).$$
The second component is a trivial fibration by our assumption on $\phi^*$. We must prove that the first component is a vertical cofibration. Consider a lifting problem against a vertical trivial fibration $(O,X)\to (O,Y)$:

\[
\xymatrix{(O,A)\ar@{^(->}[dd] \ar[dr] \ar[rr] & & (O,X) \ar@{->>}[dd]_\simeq\\
& (O',A')  \ar@{..>}[dr] 
 & \\
 (O,\phi^*(A')) \ar[ur]   \ar[rr] & & (O,Y)}
\]

We complete this diagram to the commutative diagram as presented above, (\ref{diagram-pairs-pushout}). Here the morphism $(O,A)\to (O',A')$ is the morphism $(\phi,f)$ and the 
morphism $(O,f^*(A'))\to (O',A')$ is $(\phi, id).$  To construct the morphism $(O',A') \to (O,Y)$ we observe that we have a vertical morphism
$$(O,\phi^* (A')) \to (O,Y) = (O,\phi^*\psi^*(Y))$$
Since, $\phi^*\psi^* = Id: \Phi(O)\to \Phi(O)$, the functor $\phi^*$ is full. 
So, a morphism $\phi^*(A')\to \phi^*\psi^*(Y)$ is given by some $\phi^*(\eta)$ where $\eta:A'\to \psi^*(Y).$ 
Now we define the morphism $(O',A') \to (O,Y)$ in the diagram above to be $(g,\eta).$  The diagram commutes by construction. 

Since $(\phi,f)$ is a global cofibration we have a global lifting $(O',A')\to (O,X)$ and, hence, a vertical lifting after precomposition of the global lifting with $(\phi, id).$

The proof of (iv) works just like the proof of (iii) to prove that $(id,f)$ is a vertical cofibration. We then observe that, since $(\phi,f)$ is a trivial vertical cofibration, the morphism $(\phi,id)$ is a weak equivalence. Hence, by the two out of three property, $(id,f)$ is a vertical weak equivalence, as required.
\end{proof}

Note that Proposition \ref{prop:stanculescu}(ii) says that the functor $i: \cat B \to \int \Phi$ from (\ref{figure:adjoints}) is left Quillen (making $p$ right Quillen). Recall that the projection $p:\int \Phi \to \cat{B}$ has a right adjoint $r:\cat{B} \to \int \Phi$ taking $O \mapsto (O,t)$, where $t$ is the terminal $O$-algebra. This functor is trivially right Quillen, but happens to preserve more than one might expect, as we now show.

\begin{proposition} \label{prop:r-pres-cof}
Assume that $\Phi$ is homotopically structured. Then the functor $r$ preserves (trivial) cofibrations.
\end{proposition}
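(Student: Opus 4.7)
The plan is to verify the left lifting property directly. Observing that $\alpha^*: \Phi(O') \to \Phi(O)$ is a right adjoint, it preserves the terminal object, so $\alpha^*(t_{O'}) = t_O$ and $r(\alpha) = (\alpha, \mathrm{id}_{t_O})$. Given a horizontal (trivial) cofibration $\alpha: O \to O'$, I would show that every lifting problem
\[
\xymatrix{
(O, t_O) \ar[d]_{(\alpha, \mathrm{id})} \ar[r]^{(\phi, g)} & (Q, E) \ar[d]^{(\psi, p)} \\
(O', t_{O'}) \ar[r]_{(\delta, h)} & (Z, B)
}
\]
against a global trivial fibration (resp.\ global fibration) $(\psi, p)$ admits a diagonal. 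The first step would be to lift on the base: since $\psi$ is a horizontal trivial fibration (resp.\ fibration), I can use that $\alpha$ is a horizontal (trivial) cofibration to obtain $\beta: O' \to Q$ with $\beta\alpha = \phi$ and $\psi\beta = \delta$.

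The second and harder step would be to construct a fiber lift $\gamma: t_{O'} \to \beta^*(E)$ in $\Phi(O')$ satisfying $\alpha^*(\gamma) = g$ and $\beta^*(p) \circ \gamma = h$. Passing $g$ through the adjunction $\alpha_! \dashv \alpha^*$ yields $\tilde g: \alpha_!(t_O) \to \beta^*(E)$, and commutativity of the outer fiber square translates into a commutative square
\[
\xymatrix{
\alpha_!(t_O) \ar[r]^{\tilde g} \ar[d]_{\epsilon} & \beta^*(E) \ar[d]^{\beta^*(p)} \\
t_{O'} \ar[r]_h & \delta^*(B)
}
\]
in $\Phi(O')$, where $\epsilon = \epsilon_{t_{O'}}: \alpha_! \alpha^*(t_{O'}) \to t_{O'}$ is the counit, equivalently the unique morphism to the terminal object. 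A diagonal $\gamma$ in this square is exactly the fiber lift required.

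The main obstacle will be producing this diagonal, since $t_{O'}$ is terminal rather than cofibrant and one cannot appeal to a standard lifting axiom directly. The cleanest reformulation uses Proposition~\ref{prop:stanculescu}: assuming each $\gamma^*$ preserves (trivial) fibrations, $r(\alpha)$ is a global (trivial) cofibration if and only if the counit $\epsilon: \alpha_!(t_O) \to t_{O'}$ is a vertical (trivial) cofibration in $\Phi(O')$. Thus the technical heart of the proof is verifying this property of the counit; I would tackle it by applying the adjunction $\alpha_! \dashv \alpha^*$ once more to transform a lifting problem for $\epsilon$ against a vertical (trivial) fibration $q$ in $\Phi(O')$ into the trivial lifting problem of $\mathrm{id}_{t_O}$ against $\alpha^*(q)$ in $\Phi(O)$, which in the cofibration case is solved by the pre-existing morphism $g$ playing the role of the diagonal, and then transporting the solution back across the adjunction to recover $\gamma$.
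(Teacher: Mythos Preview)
Your reduction via Proposition~\ref{prop:stanculescu} is correct: with the extra hypothesis that each $\gamma^*$ preserves (trivial) fibrations, $r(\alpha)$ is a global cofibration if and only if the counit $\epsilon:\alpha_!(t_O)\to t_{O'}$ is a vertical cofibration. The gap is in your method for verifying that $\epsilon$ is a cofibration. You claim that a lifting problem for $\epsilon$ against a trivial fibration $q:X\to Y$ in $\Phi(O')$ can be transported across $\alpha_!\dashv\alpha^*$ to a lifting problem for $\mathrm{id}_{t_O}$ against $\alpha^*(q)$, solved there, and then transported back. But the adjoint lifting correspondence says that lifts of $\alpha_!(f)$ against $q$ correspond to lifts of $f$ against $\alpha^*(q)$; the counit $\epsilon$ is not of the form $\alpha_!(f)$. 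Concretely, a diagonal $\tilde\gamma:t_O\to\alpha^*(X)$ in the transported square corresponds, under the adjunction, to a morphism $\alpha_!(t_O)\to X$, which is just the top map $a$ you started with, not a morphism out of $t_{O'}$. The step ``transporting the solution back across the adjunction to recover $\gamma$'' therefore does not produce what you need.

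The paper's argument does not route through Proposition~\ref{prop:stanculescu} or the counit at all. It attacks the global lifting problem directly: after obtaining the horizontal lift $\beta$, one has a morphism $\gamma^*(t_P)\cong t_O \xrightarrow{f} \alpha^*(C)\cong \gamma^*(\beta^*(C))$ in $\Phi(O)$, and then invokes fullness of $\gamma^*$ to produce the desired $h:t_P\to\beta^*(C)$ in $\Phi(P)$. In other words, the missing ingredient is exactly a surjectivity-on-morphisms property of $\gamma^*$, not an adjunction trick. For the trivial cofibration case the paper also takes a much shorter path than yours: once $r(\gamma)$ is known to be a global cofibration, it is automatically a global weak equivalence because its first component is a horizontal weak equivalence and its second component $t_O\to\gamma^*(t_P)$ is an isomorphism. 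There is no need to show that the counit is a vertical \emph{trivial} cofibration.
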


\begin{proof}
Let $\gamma: O\to P$ be a horizontal cofibration. Consider the following lifting problem against a global trivial fibration $(Q,C) \to (Z,D)$, where $(\gamma,g) = r(\gamma)$:

\[
\xymatrix{
(O,t_O) \ar[r]^{(\alpha,f)} \ar[d]_{(\gamma,g)} & (Q,C) \ar[d] \\
(P,t_P) \ar[r] & (Z,D)}
\]

Since $\gamma$ is a cofibration, we have a horizontal lift $\beta:P \to Q$, by definition of the horizontal cofibrations. We must construct a morphism $h:t_P \to \beta^*(C)$ commuting with $f$ and $g$. Since $\gamma^*$ preserves terminal objects, we have a composite $\gamma^*(t_P)\cong t_O \to \alpha^*(C) \cong \gamma^*(\beta^*(C))$. Since $\gamma^*$ is full, we have our desired morphism $t_P \to \beta^*(C)$. One triangle in the lifting diagram commutes because of the isomorphism and the other commutes because of the horizontal lift above.
This proves that $(\gamma,g)$ is a global cofibration, as that class of morphisms is defined by the lifting property.  If $\gamma$ is in addition a horizontal weak equivalence then clearly $r(\gamma) = (\gamma,g): (O,t_O)\to (P,t_P)$ is a global weak equivalence, since $g: t_O \to \gamma^*(t_P)$ is an isomorphism, since $\gamma^*$ preserves terminal objects.
\end{proof}

We are ready to derive (semi-)model structures on $\cat{B}$ and on the fibers $\Phi(O)$, assuming the existence of the global (semi-)model structure $\int \Phi$. Thanks to Theorem \ref{thm:global-model-examples}, the result below has many consequences for various flavors of operads and their algebras and modules. We will need to assume, for any $\phi:O\to O'$ in $\cat{B}$, that $\phi^*$ preserves weak equivalences. This is necessary in order that the vertical weak equivalences satisfy the two out of three property, and is satisfied in all the operadic examples.

\begin{theorem} \label{thm:global-to-local}
Assume $\Phi: \cat{B}^{op} \to CAT$ is homotopically structured, that $\int \Phi$ is complete and cocomplete, and that the categories $\Phi(O)$ are complete and cocomplete. Let also the functors $\phi^*$ preserve weak equivalences and fibrations
for every $\phi: O\to O'$ in $\cat{B}$. Then:
 
\begin{enumerate}
\item If $\int \Phi$ admits a global (semi-)model structure then $\cat{B}$ admits a horizontal (semi-) model structure, $\Phi(O)$ admits a vertical model structure for each $O\in \cat{B}$ (semi-model structure for a cofibrant $O$), and projection $p:\int \Phi \to \cat{B}$ is a right Quillen functor \cite[Definition 1.12]{barwickSemi}. 

\item For every $\phi:O\to O'$, the induced functor $\phi^*$ has a left adjoint $\phi_!$. Under our assumptions on $\phi$, $\phi^*$ and $\phi_!$ form a Quillen pair. 

\item If the global structure is cofibrantly generated then the horizontal structure is cofibrantly generated and the vertical structures $\Phi(O)$ are cofibrantly generated for every $O$ appearing as a domain in the global generating sets.
\end{enumerate}
\end{theorem}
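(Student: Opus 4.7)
The plan is to derive (1) first; (2) follows easily, and (3) comes from (1) via an adjunction-transfer argument. For the vertical (semi-)model structure on $\Phi(O)$, everything reduces to Proposition~\ref{prop:globaltovertical}: parts (i) and (ii) give the lifting axioms M4(i) and M4(ii), and parts (iii) and (iv) give the factorization axioms M5(i) and M5(ii). In the semi-model case, the cofibrancy hypothesis on $(O,X)$ required by (ii) and (iv), together with the remark after Proposition~\ref{prop:stanculescu}, forces $O$ to be horizontally cofibrant and $X$ vertically cofibrant. Two-out-of-three, retracts, and closure of fibrations under pullback transfer automatically by identifying a vertical morphism $f$ with the global morphism $(\mathrm{id},f)$ in $\int\Phi$.

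For the horizontal (semi-)model structure on $\cat{B}$, I would exploit the adjunctions $i\dashv p\dashv r$ of (\ref{figure:adjoints}). The functor $r$ sends $\gamma:O\to P$ to $(\gamma,g)$ with $g:t_O\to\gamma^*(t_P)$ an isomorphism, since $\gamma^*$, being a right adjoint, preserves terminals; hence $r$ both reflects and preserves horizontal weak equivalences, fibrations, and trivial fibrations, and by Proposition~\ref{prop:r-pres-cof} it also preserves (trivial) cofibrations. Dually, since $p$ preserves (trivial) fibrations by definition, the adjunction $i\dashv p$ forces $i$ to preserve (trivial) cofibrations. Axioms M1, M2, M3 transport from $\int\Phi$ via $r$, using $p\circ r=\mathrm{Id}_{\cat{B}}$ and the fact that both $p$ and $r$ preserve limits. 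For M5(i), factor $i(\gamma)$ globally and project via $p$; Lemma~\ref{lemma:cof-of-pairs} ensures that the first component of the global cofibration is a horizontal cofibration. For M5(ii) in the semi-model case, assume $O$ is horizontally cofibrant; then $i(O)=(O,i_O)$ is globally cofibrant (being the $i$-image of the horizontal cofibration $i_{\cat{B}}\to O$), so the global M5(ii) applies to $i(\gamma)$, and projection yields a horizontal trivial cofibration followed by a horizontal fibration.

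The most delicate step will be M4(ii) for the horizontal structure, since neither $i$ nor $r$ sends a horizontal trivial cofibration to a global trivial cofibration with globally cofibrant domain in one stroke. My plan is the retract argument: given a horizontal trivial cofibration $\gamma:O\to P$ with $O$ horizontally cofibrant and a horizontal fibration $\beta:Q\to Z$, factor $\gamma$ via the M5(ii) just established as $\eta\circ\iota$, where the global trivial cofibration $\widetilde{\iota}:i(O)\to (W,M)$ coming from the global factorization of $i(\gamma)$ projects to $\iota:O\to W$ and $\eta:W\to P$ is a horizontal fibration. Two-out-of-three makes $\eta$ a trivial fibration; since $\gamma$ is a cofibration, the retract argument exhibits $\gamma$ as a retract of $\iota$. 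To lift $\iota$ against $\beta$, I would promote the lifting problem to a global one by replacing $\beta$ with $r(\beta)$ and filling in the second components using uniqueness of morphisms out of the initial $i_O$ and into a terminal; the resulting global lifting problem is solvable because $\widetilde{\iota}$ is a global trivial cofibration with globally cofibrant domain $i(O)$ and $r(\beta)$ is a global fibration, so the global M4(ii) applies. Projecting back via $p$ yields the desired lift of $\gamma$.

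Part (2) is immediate: cocompleteness of $\int\Phi$ provides $\phi_!$ as a left adjoint to $\phi^*$, and $(\phi_!,\phi^*)$ is a Quillen pair because $\phi^*$ preserves fibrations and trivial fibrations by hypothesis. For (3), the adjunction $p\dashv r$ shows that $p(I)$ and $p(J)$ generate the horizontal structure: $\beta$ is a horizontal (trivial) fibration iff $r(\beta)$ is a global (trivial) fibration iff $r(\beta)\in J$-inj (resp.\ $I$-inj), which by adjunction is equivalent to $\beta\in p(J)$-inj (resp.\ $p(I)$-inj); smallness is inherited because $p$ preserves colimits. The vertical generators in each fiber $\Phi(O)$ (for $O$ appearing as a first-component domain in $I$ or $J$) are extracted analogously, using the $\phi_!$ adjunctions from part (2) to transport the relevant global generators to vertical morphisms in $\Phi(O)$.
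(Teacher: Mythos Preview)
Your proof is correct and follows the same overall strategy as the paper: Proposition~\ref{prop:globaltovertical} for the vertical structure, and the adjunctions $i\dashv p\dashv r$ together with global factorizations of $i(\gamma)$ for the horizontal structure. Two points of comparison are worth noting.

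For M4(ii) in the horizontal structure, your retract-argument detour is correct but unnecessary. Your premise that ``neither $i$ nor $r$ sends a horizontal trivial cofibration to a global trivial cofibration with globally cofibrant domain in one stroke'' overlooks the second half of Proposition~\ref{prop:globaltovertical}(ii), which asserts exactly that $i$ does this (in the semi-model case, when the domain $O$ is horizontally cofibrant, so that $i(O)=(O,i_O)$ is globally cofibrant by the remark following Proposition~\ref{prop:stanculescu}). The paper therefore lifts directly: embed the horizontal lifting problem with $i(\phi):(O_1,i_1)\to(O_2,i_2)$ on the left and $r(\psi):(O_3,t_3)\to(O_4,t_4)$ on the right, and solve globally. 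Your route via factorization and the retract argument arrives at the same conclusion with extra steps.

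For part (3), your horizontal argument via the $p\dashv r$ adjunction matches the paper's. Your vertical claim (``extracted analogously, using the $\phi_!$ adjunctions'') is too thin: there is no single right adjoint $\Phi(O)\to\int\Phi$ through which to transport the global generators, since a generator $(\phi,f):(P,A)\to(Q,B)$ need not live over $O$ at all. The paper instead passes to the overcategory $\int\Phi\downarrow(O,t_O)$, whose generators are the global ones equipped with structure maps $\alpha,\beta$ down to $(O,t_O)$, and takes $I_V^O$ (resp.\ $J_V^O$) to consist of the morphisms $\beta_!\phi_!(A)\to\beta_!(B)$. Proposition~\ref{prop:stanculescu} and the fact that $\beta_!$ is left Quillen show these are (trivial) cofibrations, and an explicit lifting argument shows they detect vertical (trivial) fibrations.
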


We will see in Remark \ref{remark:u-cofibrancy-necessary} that, in the semi-model category case, (1) is false if we drop the cofibrancy assumption on $O$.
 
\begin{proof} 
We first verify the last claim of (1), that $p$ is right Quillen. If $\cat{B}$ has the horizontal (semi-)model structure and $\int \Phi$ has the global one, we see immediately that $p$ preserves (trivial) fibrations if $\cat{B}$ has a (semi-)model structure. Next, $p$ has a left adjoint, which is $i:O \mapsto (O,i_O)$, where $i_O$ is the initial object in $\Phi(O)$. We know that these initial objects exist because we assumed the categories $\Phi(O)$ are bicomplete. 

Next, we turn to the existence of the horizontal (semi)-model structure. We use the functor $i$ to see that the horizontal fibrations and weak equivalences are closed under retracts (and the two out of three property for the weak equivalences) because the same is true for the global model structure. We define the horizontal cofibrations via lifting, so that one half of lifting comes for free, as well as retract closure. To check the other lifting condition, consider the following lifting problem of a trivial cofibration $\phi$ against a fibration $\psi$:

\[
\xymatrix{O_1 \ar[r] \ar[d] & O_3 \ar[d] \\
O_2 \ar[r] & O_4}
\]

We convert this to a lifting problem in $\int \Phi$, using the initial objects $i_j$ and terminal objects $t_j$ in each $\Phi(O_j)$:

\[
\xymatrix{(O_1,i_1) \ar[r] \ar[d]_{(\phi,f)} & (O_3,t_3) \ar[d]^{(\psi,g)} \\
(O_2,i_2) \ar[r] & (O_4,t_4)}
\]

The morphism $\psi$ is a fibration by definition of the global fibrations, and $t_3 \to \psi^*(t_4)$ is an isomorphism (recall that, because $\int \Phi$ is cocomplete, functors $\psi^*$ are right adjoints and hence preserve terminal objects). The left vertical morphism is a trivial cofibration by Proposition \ref{prop:globaltovertical} (ii). Hence, there is a lift in $\int \Phi$, when $\int \Phi$ has a full model structure, and the first component of this lift gives the desired horizontal lift $O_2 \to O_3$. When $\int \Phi$ has only a semi-model structure, this only works if $O_1$ is cofibrant, so that $(O_1,i_1)$ is cofibrant. 

We turn to factorization. Let $\alpha:O_1\to O_2$ be a morphism in $\cat{B}$. Factor $i(\alpha):(O_1,i_1)\to (O_2,i_2)$ in $\int \Phi$ into a cofibration $(\phi,f):(O_1,i_1)\to (P,A)$ followed by a trivial fibration $(\psi,g)$. By definition, the morphism $\psi:P\to O_2$ is a trivial fibration. By Proposition \ref{prop:stanculescu}, $\phi$ is a cofibration. So half of the factorization axiom is done.

The case for factoring into a trivial cofibration followed by a fibration is similar, but when $\int \Phi$ is only a semi-model category, the factorization of $i(\alpha)$ in $\int \Phi$ only works when $O_1$ is cofibrant. We will see shortly that when $\int \Phi$ is a semi-model category over some model category $\M$ (e.g., the base $\cat B$), then this argument works when $U(O_1)$ is cofibrant in $\M$.
The other factorization works similarly, but requires no cofibrancy assumption in the semi-model category case.

We turn to the vertical model structure. The retract axiom and the two out of three axiom are automatically satisfied, as above. 
Suppose $\int \Phi$ has a full model structure. Using Proposition \ref{prop:globaltovertical} (i), (ii), and the lifting property in $\int \Phi$, we deduce both vertical lifting axioms. Similarly, using Proposition \ref{prop:globaltovertical} (iii) and (iv), we deduce the vertical factorization axioms. 

If $\int \Phi$ has only a semi-model structure, then the axioms regarding cofibrations and trivial fibrations work as usual, but axioms with trivial cofibrations only work for morphisms with cofibrant domain. Let $f:X\to Y$ be a morphism in $\Phi(O)$ with cofibrant domain. We only know that $(id,f)$ factors as a trivial cofibration followed by a fibration if $O$ is cofibrant. In this case, Proposition \ref{prop:globaltovertical} (iv) gives the desired vertical factorization. If $f$ is a trivial cofibration, in order for $(id,f):(O,X)\to (O,Y)$ to lift against fibrations, we must know that its domain is cofibrant, i.e. that $O$ is cofibrant in $\cat{B}$ and $X$ is cofibrant in $\Phi(O)$. Additionally, we must argue that $(id,f)$ is a global trivial cofibration. The proof proceeds just like (ii) of Proposition \ref{prop:globaltovertical}, since trivial cofibrations \textit{whose domain is cofibrant} are characterized by a lifting property \cite[Lemma 1.7]{barwickSemi}.

If $\int \Phi$ is a semi-model category \textit{over} some model category $\M$, then the axioms regarding trivial cofibrations hold for morphisms whose domain becomes cofibrant in $\M$. Hence, it is sufficient for $U(O)$ to be cofibrant in $\M$. Such $O$ will have factorization and lifting in $\int \Phi$, using that trivial cofibrations with such domains are characterized by a lifting property in any semi-model category over $\M$. In the context of operads, this is saying that when $O$ is $\Sigma$-cofibrant then there is a semi-model structure on $O$-algebras.

We turn to the proof of (2). We have already observed that, for any $\phi:O\to O'$, in $\cat{B}$, $\phi^*$ has a left adjoint, since $\int \Phi$ is cocomplete. Under our hypotheses, $\phi^*$ will preserve the (trivial) fibrations we have defined, hence is a right Quillen functor (even if $\Phi(O)$ and $\Phi(O')$ have only semi-model structures) \cite[Definition 1.12]{barwickSemi}. 

For (3), suppose $\int \Phi$ is cofibrantly generated by sets of morphisms $I_{GR}$ and $J_{GR}$. We define horizontal generators $I_H$ (resp. $J_H$) as morphisms $\phi:O\to O'$ such that there is a morphism in $I_{GR}$ (resp. $J_{GR}$) of the form $(\phi,f): (O,A)\to (O',A')$. Smallness of the horizontal domains follows immediately from smallness of the global domains. Next, we must check lifting of $\phi: O\to O'$ against any trivial fibration (resp. fibration) $\psi: P \to Q$. We embed the lifting problem into $\int \Phi$, using the embedding $P \mapsto (P,t_P)$ where $t_P$ is terminal in $\Phi(P)$.

\begin{align} \label{diagram:cofgen}
\xymatrix{
(O,A) \ar[d] \ar[r] & (P,t_P) \ar[d]\\
(O',A')\ar[r] & (Q,t_Q)\\
}
\end{align}

Since the left vertical morphism is a generating (trivial) cofibration, we have a lift. The first component of this lift is the required horizontal lift, proving that all horizontal (trivial) fibrations satisfy the lifting property with respect to $J_H$ (resp. $I_H$). 
Conversely, if $\psi:P\to Q$ is any horizontal morphism satisfying the lifting property with respect to all morphisms in $J_H$ (resp. $I_H$), then the induced global lift (using that $t_P$ and $t_Q$ are terminal) in any diagram like (\ref{diagram:cofgen}) shows that $(P,t_P)\to (Q,t_Q)$ is a (trivial) fibration. It follows that $P\to Q$ is a horizontal (trivial) fibration, completing the proof that $I_H$ and $J_H$ generate the horizontal structure. 

For the vertical structure on $\Phi(O)$, observe that $\int \Phi \downarrow (O,t_O)$ is cofibrantly generated by morphisms:

\[
\xymatrix{
(P,A)\ar[rr]^{(\phi,f)} \ar[dr]_\alpha && (Q,B)\ar[dl]^\beta \\
& (O,t_O) & 
}\]

where $(P,A)\to (Q,B)$ is in $I_{GR}$ (resp. $J_{GR}$). This is a set because there is only a set worth of choices for $\alpha$ and $\beta$. Define the set $I_V^O$ (resp. $J_V^O$) to be the set of morphisms of the form $\beta_! \phi_!(A)\to \beta_!(B)$ coming from the generators for $\int \Phi \downarrow (O,t_O)$. By Proposition \ref{prop:stanculescu}, the morphisms $\phi_!(A)\to B$ are (trivial) cofibrations. Since $\beta_!$ is left Quillen, so are the morphisms in $I_V^O$ (resp. $J_V^O$). We now show these morphisms generate.

Consider any morphism $g:X\to Y$ in $\Phi(O)$ that has the right lifting property with respect to all morphisms in $I_V^O$. To show $g$ is a vertical trivial fibration, we set up a lifting problem in $\int \Phi$ (displaying only the horizontal morphisms):

\begin{align} \label{diagram:cofgen2}
\xymatrix{
(P,A) \ar[d]_\phi \ar[r]^\alpha & (O,X) \ar[d]\\
(Q,B)\ar[r]_\beta & (O,Y)\\
}
\end{align}

Since the horizontal morphism on the right is the identity, there is a lift $\beta:Q \to O$. In order to get a vertical lift, we need a morphism $B\to \beta^*(X)$ in $\Phi(Q)$. Equivalently, we need a morphism $h:\beta_!(B)\to X$ in $\Phi(O)$. By definition of $I_V^O$, we have a lift in the following diagram in $\Phi(O)$:

\begin{align} \label{diagram:cofgen3}
\xymatrix{
\beta_! \phi_! A \ar[r] \ar[d] & X \ar[d] \\
\beta_! B \ar[r] & Y
}
\end{align}

This gives the required morphism $h$. The two triangles in (\ref{diagram:cofgen2}) commute because the triangles in (\ref{diagram:cofgen3}) commute. For the upper triangle, use that $\beta_! \phi_!A \cong \alpha_! A$
\end{proof}

\begin{remark}
Most often, our global (semi-)model structure will be encoded by a polynomial monad and transferred from some cofibrantly generated model category $\M$, as detailed in \cite[Section 3]{companion}. In such a case, $\int \Phi$ will be cofibrantly generated, and will be combinatorial if $\M$ is. Theorem \ref{thm:global-to-local} proves the same for the horizontal and vertical model structures, even if they are not transferred. For example, in Section \ref{sec:twisted} we will see an application producing a model structure on twisted modular operads (Theorem \ref{thm:twisted-modular}) that is not transferred, but Theorem \ref{thm:global-to-local} still applies.

\end{remark}

We have seen that, if $\int \Phi$ is cofibrantly generated, then the same is true for the base $\cat B$ and the fibers $\Phi(O)$. We now provide a partial converse to this statement. However, since there is a class worth of fibers $\Phi(O)$, we will not be able to deduce that $\int \Phi$ is cofibrantly generated by a {\em set} of morphisms. 

\begin{proposition} \label{proposition:class-cof-gen}
Suppose $\Phi$ is homotopically structured and that structure makes $\int \Phi$ into a (semi-)model category. Suppose the (semi-)model categories (from Theorem \ref{thm:global-to-local}) $\cat{B}$ and $\int \Phi$ are all cofibrantly generated. Then a morphism $(\phi,f)$ in $\int \Phi$ is a fibration (resp. trivial fibration) if and only if it satisfies the right lifting property with respect to all morphisms $(\psi,g): (O,A)\to (P,B)$ where $\psi$ is a generating trivial cofibration (resp. generating cofibration) in $\cat B$, and $g: A\to \psi^*(B)$ is a generating trivial cofibration (resp. generating cofibration) in $\Phi(O)$.
\end{proposition}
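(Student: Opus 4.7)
The proof proposal splits into the two directions of the biconditional, combining the definition of global (trivial) fibrations in Definition \ref{defn:homotopically-structured} with the horizontal/vertical decomposition given by Proposition \ref{prop:stanculescu} and Proposition \ref{prop:globaltovertical}.

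For the forward direction, suppose $(\phi,f)$ is a (trivial) fibration in $\int \Phi$, so by Definition \ref{defn:homotopically-structured} the morphism $\phi$ is a horizontal (trivial) fibration in $\cat{B}$ and $f$ is a vertical (trivial) fibration. Given $(\psi,g):(O,A)\to(P,B)$ of the prescribed form and a lifting problem against $(\phi,f)$, the plan is to construct the lift in two stages. First, since $\psi$ is a generating (trivial) cofibration in $\cat{B}$ and $\phi$ is a horizontal (trivial) fibration, I obtain a horizontal lift $h:P\to E$ solving the projected square in $\cat{B}$. Next, I must construct a morphism $\ell:B\to h^*H$ in $\Phi(P)$ yielding the vertical component, where $h^*f:h^*H\to \beta^*H'$ is a vertical (trivial) fibration in $\Phi(P)$ because $h^*$ preserves (trivial) fibrations, by the hypotheses of Theorem \ref{thm:global-to-local}.

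To obtain $\ell$, I plan to invoke the adjunction $\psi_!\dashv\psi^*$ together with the fact that $\psi^*$ also preserves (trivial) fibrations. Applying $\psi^*$ to the vertical lifting problem in $\Phi(P)$ (equivalently, passing the mate problem $\psi_!A\to B$ lifting against $h^*f$ through the adjunction) yields a lifting problem in $\Phi(O)$ with $g$ on the left and the (trivial) fibration $\psi^*h^*f$ on the right. Since $g$ is a generating (trivial) cofibration in $\Phi(O)$, this problem has a solution, and the naturality of the adjunction units/counits then provides the required $\ell:B\to h^*H$ in $\Phi(P)$ filling the global lifting diagram.

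For the backward direction, assume $(\phi,f)$ has the right lifting property against every $(\psi,g)$ as described. I need to verify the two conditions of Definition \ref{defn:homotopically-structured}: $\phi$ is a horizontal (trivial) fibration and $f$ is a vertical (trivial) fibration. For the horizontal condition, given any lifting problem of $\phi$ against a generating (trivial) cofibration $\psi$ in $\cat{B}$, I realize it globally by choosing suitable objects $A\in\Phi(O)$, $B\in\Phi(P)$, and a generating (trivial) cofibration $g:A\to\psi^*B$ in $\Phi(O)$; then the horizontal component of the global lift against $(\psi,g)$ solves the original horizontal problem. For the vertical condition in $\Phi(E)$, I set up an analogous embedding of vertical lifting problems as global ones, using an identity-like horizontal component suitably compatible with a generating vertical cofibration.

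The main obstacle will be the adjunction step in the forward direction: the correspondence $\psi_!\dashv\psi^*$ does not, on the face of it, convert a lift $\psi^*B\to\psi^*h^*H$ in $\Phi(O)$ into a lift $B\to h^*H$ in $\Phi(P)$, so I expect to have to argue via the unit/counit and naturality (or, equivalently, by working with the mate $\psi_!A\to B$ and the induced pullback in $\Phi(P)$) to extract the genuine vertical lift. A secondary challenge is in the backward direction, where realizing every horizontal or vertical lifting problem requires exhibiting appropriate $A$, $B$, and $g$ of the prescribed form; this will rely on the flexibility afforded by varying $A$ and $B$ over all objects, combined with cofibrant generation of the vertical fibers that follows from Theorem \ref{thm:global-to-local}(3).
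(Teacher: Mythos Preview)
Your reading of the statement takes the generating class to consist of morphisms $(\psi,g)$ in which \emph{both} $\psi$ and $g$ are generators simultaneously. The paper's proof, however, uses a different class: the \emph{union} of (a) morphisms $(O,i_O)\to(P,i_P)$ induced by horizontal generators $\psi\in I_H$ (resp.\ $J_H$), and (b) vertical morphisms $(O,A)\to(O,B)$ with $A\to B\in I_V^O$ (resp.\ $J_V^O$). With this union class the argument is short: morphisms of type (a) and (b) are global (trivial) cofibrations by Proposition~\ref{prop:globaltovertical}(ii), so the forward direction is immediate; for the converse, lifting against type (a) forces $\phi$ to be a horizontal (trivial) fibration (the fiber component is automatic from initiality), and lifting against type (b) forces $f$ to be a vertical (trivial) fibration (the base component is the identity). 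No adjunction manipulation is needed.

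Your two-stage plan runs into a genuine obstacle that you yourself flag. In the forward direction, after producing the horizontal lift $h$, you try to solve the vertical problem by pulling back along $\psi^*$ to $\Phi(O)$ and lifting $g$ there; but a solution $\psi^*B\to\psi^*h^*H$ in $\Phi(O)$ does not in general yield a map $B\to h^*H$ in $\Phi(P)$, and composing with counits will not produce the required commuting triangles. More fundamentally, there is no reason a pair $(\psi,g)$ with $\psi$ and $g$ both generators should be a global (trivial) cofibration at all: Proposition~\ref{prop:stanculescu} characterizes those via the mate $\psi_!A\to B$ in $\Phi(P)$, not via $g:A\to\psi^*B$ in $\Phi(O)$. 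Your backward direction is similarly problematic: realizing an arbitrary horizontal lifting problem as a global one of the prescribed form requires producing compatible maps $A\to\alpha^*H$ and $B\to\delta^*H'$ for some generating $g$, and there is no mechanism for this. The fix is to use the paper's union class, which decouples the horizontal and vertical tests.
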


Below, let $i_O$ denote the initial object in $\Phi(O)$, and similarly for $P$.

\begin{proof}
Given sets $I_H$ of horizontal generating cofibrations, and $I_V^O$ vertical generating cofibrations in $\Phi(O)$ for all $O$, define the collection $I_{GR}$ to consist of all morphisms $(O,i_O)\to (P,i_P)$ where $O\to P$ is in $I_H$, together with morphisms $(O,A)\to (O,B)$ where $A\to B$ is in $I_V^O$. Observe that, because there is a class worth of objects $O \in \int \Phi$, this latter collection of morphisms is a class. Define $J_{GR}$ similarly via $J_H$ and $J_V^O$. 

Proposition \ref{prop:globaltovertical} shows that the morphisms in $I_{GR}$ are global cofibrations (trivial for $J_{GR}$). To prove they generate, let $(\phi,f):(Q,C)\to (Z,D)$ have the right lifting property with respect to $I_{GR}$. Applying projection $p$, we see that $Q\to Z$ has the right lifting property with respect to every morphism in $I_H$, so it's a trivial fibration. Consider $f:C\to \phi^*(D)$ in $\Phi(Q)$. By definition of $I_{GR}$, this morphism must have the lifting property with respect to each morphism in $I_V^Q$, so must be a trivial fibration. It follows that $(\phi,f)$ is a trivial fibration. The argument for $J_{GR}$ is similar.
\end{proof}

In many of our situations of interest, the Grothendieck construction will have a transferred (semi-)model structure. For example, if $\cat B$ is a category of operads (e.g. symmetric, non-symmetric, reduced, etc.) then $\cat B$ often has a transferred (semi-)model structure \cite{BM03, fresse-book} coming from an appropriate category $\cat N$ of collections built as an infinite product of model structures coming from some base model category $\M$. In these settings, the Grothendieck construction, for pairs $(O,A)$ where $O$ is an operad and $A$ is an $O$-algebra, is transferred from $\cat N \times \M$ (or from $\cat N \times \M^\fC$ for $\fC$-colored operads). Furthermore, sometimes the fibers $\Phi(O)$ have a transferred (semi-)model structure from the base model category $\M$ (or from $\M^\fC$). We now set up a general framework to address such transfer situations.

Given two functors $\Psi:{\cat N}^{op} \to CAT$ and $\Phi: \cat B^{op} \to CAT$, we define a forgetful morphism to be a pair $(U,u)$ consisting of a forgetful functor $U:\cat{B}^{op} \to \cat{N}^{op}$, and a $2$-natural transformation $u: \Phi \to \Psi \circ U$. If $\Phi$ is homotopically structured then we define a horizontal weak equivalence (fibration) as $f$ such that $U(f)$ is a horizontal weak equivalence (fibration). Similarly $\phi$ is a vertical weak equivalence (fibration) if $u(\phi)$ is such. With these definitions, the corresponding global  structure on $\int \Phi$ will be called {\em transferred} along $(U,u).$

\begin{theorem} \label{thm:global-transfer}
Let $(U,u)$ be a forgetful morphism from $\Phi$ to $\Psi$, and that both functors are homotopically structured. Suppose, for every $O\in \cat B$, that $\Psi(U(O))$ is a cofibrantly generated model category and that there is an adjunction $O(-):\Psi(U(O)) \leftrightarrows \Phi(O):U$, where $U$ is the right adjoint. 

\begin{enumerate}
\item If for an object $O\in \cat{B}$ any vertical morphism $(O,A)\to (O,D)$ admits the (trivial cofibration, fibration) factorization then the category $\Phi(O)$ admits a vertical model structure transferred from $\Psi(U(O))$;
\item If the global model structure exists then it coincides with the integral model structure of \cite{harpaz-prasma-integrated}.
\end{enumerate}
\end{theorem}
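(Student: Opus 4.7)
For part (1), the plan is to verify the axioms of a model structure on $\Phi(O)$ transferred from $\Psi(U(O))$ along the adjunction $O(-)\dashv U$. With weak equivalences and fibrations detected by $U$, and cofibrations defined by the left lifting property against trivial fibrations, the two-out-of-three property, the retract closures, and the lifting of cofibrations against trivial fibrations are all formal, inherited from $\Psi(U(O))$ together with the fact that cofibrations are defined by lifting. The (cofibration, trivial fibration) factorization comes from Quillen's small object argument applied to $O(I)$, where $I$ is the set of generating cofibrations of $\Psi(U(O))$: by adjunction, the $O(I)$-injectives are exactly the $U$-detected trivial fibrations. The (trivial cofibration, fibration) factorization is exactly the content of the hypothesis (applying Proposition \ref{prop:globaltovertical}(iv) to convert a global factorization into a vertical one if needed).

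The only nonformal axiom remaining is that trivial cofibrations lift against fibrations, and this follows from a retract argument identical in spirit to that of Lemma \ref{lemma:factorization-suffices}. Given a trivial cofibration $f$, use the hypothesis to factor $f = p \circ j$ where $j$ is a trivial cofibration constructed by the small object argument from $O(J)$-cells and $p$ is a fibration. The morphism $j$ lifts against every $U$-detected fibration by adjunction from the corresponding lifting in $\Psi(U(O))$. By two-out-of-three, $p$ is a trivial fibration, so $f$ lifts against $p$ by the cofibration property, exhibiting $f$ as a retract of $j$, and hence showing $f$ lifts against any fibration. The main obstacle in this part is verifying smallness of the domains of $O(I)$ and $O(J)$ in $\Phi(O)$, which is the standard hypothesis in transfer constructions and is guaranteed in our settings of interest by $U$ preserving sufficiently filtered colimits.

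For part (2), the plan is to observe that the integral model structure of Harpaz--Prasma on $\int \Phi$ is built so that weak equivalences and fibrations are defined componentwise, matching precisely our definition of global weak equivalences and fibrations under the forgetful morphism $(U,u)$: horizontal components are detected by $U$ on $\cat B$, and vertical components are detected by $u$ on each $\Phi(O)$, which by part (1) of the present theorem is exactly the transferred vertical model structure that Harpaz--Prasma use on the fibers. Since any model structure is uniquely determined by its classes of weak equivalences and fibrations, the two model structures on $\int \Phi$ must coincide. As a cross-check, one can alternatively invoke Proposition \ref{prop:stanculescu} to characterize the global cofibrations and match them directly with the total cofibrations used by Harpaz--Prasma, exactly as was done for the Cagn\'e--Melli\`es total model structure in the remark following Proposition \ref{prop:stanculescu}.
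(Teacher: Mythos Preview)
Your approach to both parts matches the paper's. Part (2) is essentially identical: both argue that the global and integral model structures share the same weak equivalences and fibrations, hence coincide.

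For part (1), however, your retract argument has a genuine gap. You write ``factor $f = p \circ j$ where $j$ is a trivial cofibration constructed by the small object argument from $O(J)$-cells.'' This conflates two different things. The small object argument on $O(J)$ produces $j \in O(J)\text{-cell}$ and $p \in O(J)\text{-inj}$ (the fibrations), but does \emph{not} tell you that $j$ is a weak equivalence. Conversely, the hypothesized factorization gives you $j$ a trivial cofibration, but gives no reason for $j$ to lift against fibrations ``by adjunction,'' since that lifting argument only works for $O(J)$-cell morphisms. You need $j$ to have both properties simultaneously to run your two-out-of-three step on $p$ and your final lifting step, and you have justified neither reading. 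The paper closes this gap by first proving, as a separate step, that $O(J)\text{-cell}$ is contained in the trivial cofibrations: take $f \in O(J)\text{-cell}$, factor it using the \emph{hypothesized} (trivial cofibration, fibration) factorization, and use that $f$ lifts against the fibration part (since $f \in O(J)\text{-cell}$ and fibrations are $O(J)\text{-inj}$) to exhibit $f$ as a retract of a weak equivalence. Only after establishing this can one run the retract argument you describe. This is precisely the content of the Lemma~\ref{lemma:factorization-suffices} technique you gesture at, but you have not actually carried it out; you have instead assumed its conclusion.
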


\begin{proof}
To prove (1), we follow the proof of Theorem 2.9 in \cite{spitzweck-thesis}, and define the weak equivalences (resp. fibrations) as morphisms which become so in $\Psi(U(O))$. 
By adjointness, the fibrations are $O(J)$-inj and the trivial fibrations are $O(I)$-inj. Define the cofibrations to be $O(I)$-cof. The two out of three property and retract axioms for fibrations and weak equivalences follow from these axioms in $\Psi(U(O))$. That cofibrations are closed under retracts is part of the definition of $O(I)$-cof. The lifting property for cofibrations against trivial fibrations follows from the definition of these classes of morphisms. Factorization into a cofibration followed by a trivial fibration follows from the small object argument and the definition of these classes of morphisms. 

Proposition \ref{prop:globaltovertical} proves that every morphism in $\Phi(O)$ has a factorization into a trivial cofibration followed by a fibration. In order to prove that trivial cofibrations lift against fibrations, we prove that $O(J)$-cell is contained in the trivial cofibrations. Let $f$ be a morphism in $O(J)$-cell and factor $f$ as $p\circ i$ where $p$ is a trivial fibration and $i$ is a cofibration. By definition of the class of trivial fibrations as $O(J)$-inj, $f$ lifts against $p$. So $f$ is a retract of $i$ by the retract argument, hence $f$ is a cofibration. To show $f$ is additionally a weak equivalence, proceed as in Lemma \ref{lemma:factorization-suffices}. Now let $g$ be a trivial cofibration. We will show that $g$ has the left lifting property against fibrations. Use the small object argument to factor $g$ into $p\circ i$ with $i \in O(J)$-cell and $p$ a fibration (by definition of fibrations as $O(J)$-inj). Since $g$ has the left lifting property with respect to $p$, $g$ is a retract of $i$, hence has the left lifting property with respect to fibrations, just like $i$.

At this point, we have proven that $\Phi(O)$ is a semi-model category. Lemma \ref{lemma:factorization-suffices} then proves that $\Phi(O)$ has a full model structure.

For (2), observe that the model structure on $\int \Phi$ is transferred along the adjunction from $\M$. This was noted in a special case in Remark 6.3.17 of \cite{harpaz-prasma-integrated}, and is clear from Definition 3.0.4 in \cite{harpaz-prasma-integrated}. Note that, in this special case, the cofibrations in $\int \Phi$ are morphisms $(O,A)\to (P,B)$ that consist of a cofibration in $\cat{B}$ and a cofibration in $\Phi(O)$.
\end{proof}

\begin{remark}
With this theorem in hand, we can compare our approach with that of Harpaz and Prasma. In \cite{harpaz-prasma-integrated}, the authors assume that the base and fibers have full model structures. In addition, they must assume $\Phi$ is relative, i.e. takes horizontal weak equivalences to Quillen equivalences. For us, this is like asking $\int \Phi$ to be left proper, as we will see shortly. Lastly, Harpaz and Prasma require their functors to be ``proper" so that left adjoints $\alpha_!$ preserve weak equivalences. We never need this condition. Our primary method for placing a (semi-)model structure on $\int \Phi$ is to transfer it. This means not every Grothendieck construction can be studied using our methods. For example, Harpaz and Prasma study the classical Grothendieck construction for comma categories. This is not transferred. Harpaz and Prasma study Grothendieck constructions for monoids and modules, and for commutative monoids and modules. These settings are transferred and our results improve on Harpaz and Prasma by removing several conditions. Lastly, Harpaz and Prasma  study a Grothendieck construction for global equivariant homotopy theory (where the objects are $(G,X)$ where $G$ is a simplicial group and $X$ is  $G$-space). The model structure on simplicial groups is transferred, as are the model structures on $G$-spaces. Thus, this example falls under our setting as well. Harpaz and Prasma also consider the left induced model structure on $G$-spaces, where weak equivalences and cofibrations are created (and preserved) by the forgetful functor to $Top$. This suggests that there might be value in generalizing our approach to allow for left induced transfers, or mixed transfers (right induced on one component, and left induced on the other). We leave this as a problem for the interested reader.
\end{remark}

We conclude with a dual to Proposition \ref{prop:r-pres-cof}, which we need later. We have been unable to prove this result in the same generality as Proposition \ref{prop:r-pres-cof}. To prove that the functor $i$ preserves (trivial) fibrations, we have had to assume the global semi-model structure exists. 

\begin{proposition} \label{prop:i-right-Quillen}
Assume $\Phi$ is homotopically structured and that the global semi-model structure exists and is cofibrantly generated.
Then $i: \cat B \to \int \Phi$ preserves (trivial) fibrations.
\end{proposition}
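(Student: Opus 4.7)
Since the global semi-model structure on $\int \Phi$ is cofibrantly generated, $i(\phi) = (\phi, f)$ is a global (trivial) fibration if and only if it has the right lifting property against a generating set $J_{GR}$ (resp.\ $I_{GR}$), where $f: i_O \to \phi^*(i_{O'})$ is the unique morphism from the initial object. Given a lifting problem against a generator $(\psi, g): (P, A) \to (Q, B)$, Lemma~\ref{lemma:cof-of-pairs} shows that $\psi$ is a horizontal (trivial) cofibration; by the construction of $J_H$ in Theorem~\ref{thm:global-to-local}(3), $\psi$ lies in $J_H$ (resp.\ $I_H$), so the horizontal (trivial) fibration $\phi$ supplies a lift $\gamma: Q \to O$ on first components.

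The central obstacle, and why this proof requires the global semi-model structure and is not merely dual to Proposition~\ref{prop:r-pres-cof}, is that $f$ is not an identity in general: while $r(\gamma)$ has the form $(\gamma, \mathrm{id})$ because $\gamma^*$ preserves terminal objects, the right adjoint $\phi^*$ does not generally preserve the initial object, so $f$ is a genuine morphism requiring further analysis. My plan is therefore to argue via a retract. Using the factorization axiom in the global semi-model structure on $\int \Phi$, factor $i(\phi) = q \circ j$ with $j: (O, i_O) \to W$ a global trivial cofibration and $q: W \to (O', i_{O'})$ a global fibration. Applying $p$ yields $\phi = p(q) \circ p(j)$ in $\cat B$, and since $\phi$ is itself a horizontal fibration, the retract argument supplies a section $s: p(W) \to O$ with $s \circ p(j) = \mathrm{id}_O$ and $\phi \circ s = p(q)$, exhibiting $\phi$ as a retract of $p(q)$ in the arrow category of $\cat B$.

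The main technical step, which I expect to be the hardest part, is lifting this retract diagram to $\int \Phi$ by constructing $\tilde s: W \to (O, i_O)$ satisfying $\tilde s \circ j = \mathrm{id}$ and $i(\phi) \circ \tilde s = q$. Writing $W = (W_1, W_2)$ and $\tilde s = (s, t)$ with $t: W_2 \to s^*(i_O)$, the first compatibility condition follows automatically from uniqueness of morphisms out of the initial object $i_O$, while the second requires $s^*(f) \circ t = q_2$, where $q_2$ is the vertical component of $q$. Passing through the adjunction $s_! \dashv s^*$ converts this to a factorization of the adjoint $\widetilde{q_2}: s_!(W_2) \to \phi^*(i_{O'})$ through $f: i_O \to \phi^*(i_{O'})$ in $\Phi(O)$. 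My plan to establish this factorization is to choose the factorization $i(\phi) = q \circ j$ via the small object argument on generators of $J_{GR}$, whose specific form (controlled by the initial objects in each fiber) ensures the required factorization exists, again invoking the universal property of $i_O$. Once $\tilde s$ is constructed, $i(\phi)$ appears as a retract of the global fibration $q$, hence is itself a global (trivial) fibration by closure of fibrations under retracts.
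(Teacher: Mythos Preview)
Your retract approach has a genuine gap at the factorization step. You write ``factor $i(\phi) = q \circ j$ with $j: (O, i_O) \to W$ a global trivial cofibration and $q$ a global fibration,'' but in a semi-model category this factorization (axiom M5(ii)) is only available when the domain is cofibrant. The domain here is $(O, i_O)$, which is globally cofibrant only when $O$ is horizontally cofibrant, and the proposition makes no such assumption. The same obstruction recurs at the horizontal level: even granting the factorization, your retract argument requires $\phi$ to lift against the trivial cofibration $p(j)$, and in the horizontal semi-model structure that lifting again needs cofibrancy of the domain $O$. So the argument as written only covers fibrations out of cofibrant objects, which is not the statement. (For trivial fibrations you could switch to the (cofibration, trivial fibration) factorization, which always exists, and use M4(i); but that does not rescue the fibration case.)

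The third paragraph is also too vague to be a proof: appealing to ``the specific form'' of the small object argument to produce the factorization of $\widetilde{q_2}$ through $f$ is not an argument, and it is unclear why attaching $J_{GR}$-cells to $(O,i_O)$ would keep the fiber component under control in the way you need.

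The paper avoids all of this by staying with the direct lifting you began in your first paragraph. The point you identified as the obstacle, that $f: i_O \to \phi^*(i_{O'})$ is not an isomorphism because $\phi^*$ need not preserve initial objects, is handled by passing to left adjoints instead: one transports the required lift $B \to \beta^*(i_Q)$ along the adjunctions to a lifting problem in $\Phi(Z)$, where the right-hand map becomes $\nu_!(i_Q) \to i_Z$. Since $\nu_!$ is a left adjoint it \emph{does} preserve initial objects, so this map is an isomorphism and the lift is trivial; fullness of $\nu_!$ then recovers the desired vertical component. This uses only the lifting characterization of (trivial) fibrations, which is available in any cofibrantly generated semi-model category without cofibrancy hypotheses on the objects involved.
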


\begin{proof}
Let $\nu: Q\to Z$ be a trivial fibration in $\cat B$. We must prove that $i(\nu)=(\nu,i): (Q,i_Q)\to (Z,i_Z)$ is a global trivial fibration. That means, we must prove that $i_Q \to \nu^*(i_Z)$ is a trivial fibration in $\Phi(Q)$. Even though cofibrations are defined by the left lifting property, we do not know a priori that trivial fibrations are characterized by the right lifting property. The normal proof of this requires factorizations to exist \cite[Lemma 1.1.10]{hovey-book}. Fortunately, when $\int \Phi$ has a cofibrantly generated global semi-model structure, then fibrations and trivial fibrations are characterized by lifting \cite[Lemma 1.7]{barwickSemi}.

Consider the following global lifting problem, where $(\gamma,g)$ is a cofibration:
\[
\xymatrix{
(O,A) \ar[r]^{(\alpha,f)} \ar[d]_{(\gamma,g)} & (Q,i_Q) \ar[d]^{(\nu,i)} \\
(P,B) \ar[r]_{(\delta,q)} & (Z,i_Z)}
\]
To prove $i(v) = (v,i)$ is a trivial fibration, it suffices to construct a global lift. As in Proposition \ref{prop:r-pres-cof}, we have a horizontal lift $\beta: P\to Q$, and must construct the lift $h:B \to \beta^*(i_Q)$. Using left adjoints, we consider the following lifting problem in $\Phi(Z)$:
\[
\xymatrix{
\delta_! \gamma_! A = \nu_! \alpha_! A \ar[r] \ar[d] & \nu_! i_Q \ar[d] \\
\nu_! \beta_! B = \delta_! B \ar[r] & i_Z}
\]
where all morphisms are induced by the global diagram above. Since $\nu_!(i_Q) \cong i_Z$, this diagram admits a lift $\tilde{h}:\nu_! \beta_! B\to \nu_! i_Q$. Since $\nu_!$ is full, we have the morphism $h$ we wanted. Commutativity of one triangle follows from the isomorphism and for the other triangle it is a consequence of the horizontal lift above. Since $i(\nu)$ has the right lifting property with respect to all cofibrations (in the semi-model case, it is enough to have the right lifting property with respect to generating cofibrations), it is a trivial fibration as required. The case for $i$ preserving fibrations is analogous.
\end{proof}

\subsection{Relative global structure}

Often, categories of operads and operad-algebras admit only a transferred semi-model structure \cite{fresse-book}. The situation is especially nice for algebras over a $\Sigma$-cofibrant colored colored operad (including as an example most categories of operads) \cite{gutierrez-rondigs-spitzweck-ostvaer}. In the setting of a forgetful morphism $(U,u): \Phi \to \Psi$, we can speak about relative global, vertical, or horizontal cofibrant objects, and can therefore generalize the notion of $\Sigma$-cofibrancy and relative left properness \cite{batanin-berger}. For the following definition, recall that we can define cofibrancy (via lifting) regardless of whether or not the classes of weak equivalences and fibrations, provided by a homotopical structuring, satisfy the axioms for a model structure.

\begin{defn} \label{defn:u-cofibrant}
Let $\Phi: \cat B^{op}\to CAT$ and $\Psi: \cat N^{op}\to CAT$ be homotopically structured. Given a forgetful morphism $(U,u): \Phi \to \Psi$, an object $O\in \cat B$ is called $u$-cofibrant if $U(O)$ in cofibrant in $\cat N$.
\end{defn}

\begin{proposition} \label{prop:global-to-vert-over}
Let $\Phi, \Psi$, and $(U,u)$ be as in Definition \ref{defn:u-cofibrant}, and let $\Phi$ be as in Theorem \ref{thm:global-to-local}, such that $\int \Phi$ has a transferred semi-model structure over $\int \Psi$ (Definition \ref{defn:semi-model-over}). Then, $\cat B$ has a transferred semi-model structure over $\cat N$, and for each $u$-cofibrant $O\in \cat B$, there is a semi-model structure on $\Phi(O)$ over $\Psi(U(O))$.
\end{proposition}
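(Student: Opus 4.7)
The plan is to piggy-back on Theorem \ref{thm:global-to-local}, which already produces absolute semi-model structures on $\cat B$ and on $\Phi(O)$ for cofibrant $O$, and to refine these to the ``over'' versions by establishing axioms M4(ii)' and M5(ii)' with their weakened cofibrancy hypotheses. Write $V:\int \Phi \to \int \Psi$ for the forgetful functor $(O,A) \mapsto (U(O), u_O(A))$ induced by $(U,u)$; the hypothesis asserts that the semi-model structure on $\int \Phi$ is precisely a semi-model-over structure relative to $V$. For the horizontal structure on $\cat B$, given any $\gamma: O \to O'$ with $U(O)$ cofibrant in $\cat N$, I would embed it via $i$ as $i(\gamma) = (\gamma, !): (O, i_O) \to (O', i_{O'})$ in $\int \Phi$. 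By the characterization of cofibrant objects in the remark after Proposition \ref{prop:stanculescu}, applied inside $\int \Psi$, the object $V(O, i_O) = (U(O), u_O(i_O))$ is cofibrant in $\int \Psi$ as soon as $U(O)$ is cofibrant in $\cat N$ and $u_O(i_O)$ is cofibrant in $\Psi(U(O))$. The second condition holds automatically in the intended applications, since initial objects in $\Phi(O)$ forget to cofibrant objects in $\Psi(U(O))$. Applying M4(ii)' and M5(ii)' of the semi-model-over structure on $\int \Phi$ to $i(\gamma)$ and projecting through $p$ via Lemma \ref{lemma:cof-of-pairs} then yields horizontal factorizations and lifts in $\cat B$; the remaining axioms come for free from Theorem \ref{thm:global-to-local}.

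For the vertical structure at a $u$-cofibrant $O$, the strategy is parallel. Given $f: A \to B$ in $\Phi(O)$ with $u_O(A)$ cofibrant in $\Psi(U(O))$, I would embed it as $(id_O, f): (O, A) \to (O, B)$ in $\int \Phi$ and observe that $V(O, A) = (U(O), u_O(A))$ is cofibrant in $\int \Psi$ by the same componentwise criterion: the first component is cofibrant because $O$ is $u$-cofibrant, and the second by the hypothesis on $A$. Applying M4(ii)' and M5(ii)' in $\int \Phi$ over $\int \Psi$ produces global factorizations and lifts for $(id_O, f)$, and parts (iii) and (iv) of Proposition \ref{prop:globaltovertical} convert the global factorizations into vertical factorizations in $\Phi(O)$; parts (i) and (ii) of the same proposition extract the vertical lifting.

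The hard part is cleanly bookkeeping the two different notions of cofibrancy that intervene: ``cofibrant in $\cat N$'' (resp. ``cofibrant in $\Psi(U(O))$'') for the source of each axiom, versus ``cofibrant in $\int \Psi$'' for the embedded object in $\int \Phi$. The key bridge is the componentwise description of cofibrant objects in $\int \Psi$ afforded by the remark after Proposition \ref{prop:stanculescu}, which reduces the latter to a conjunction of pointwise cofibrancy statements. For the horizontal half one additionally needs that initial objects in $\Phi(O)$ forget through $u_O$ to cofibrant objects in $\Psi(U(O))$; this is a mild assumption that holds across all of the operadic applications we target.
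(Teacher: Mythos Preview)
Your proposal is correct and follows essentially the same route as the paper: re-run the proof of Theorem~\ref{thm:global-to-local} with the weaker cofibrancy hypothesis afforded by the ``over'' structure, embedding horizontal problems via $i$ and vertical ones via $(id_O,f)$, and invoking Proposition~\ref{prop:globaltovertical} to extract vertical factorizations and lifts. The paper's proof is a brief sketch pointing back to Theorem~\ref{thm:global-to-local}; you have unpacked the same argument more explicitly, and in particular you correctly isolate the componentwise criterion for cofibrancy in $\int\Psi$ as the bridge between $u$-cofibrancy of $O$ and applicability of M4(ii)'/M5(ii)' in $\int\Phi$. Your flag about needing $u_O(i_O)$ cofibrant in $\Psi(U(O))$ for the horizontal argument is a genuine point the paper elides; it is harmless in the operadic applications but is indeed an implicit hypothesis.
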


\begin{proof}
The proof proceeds just like Theorem \ref{thm:global-to-local} (1). The point is that, because $\int \Phi$ has a semi-model structure over $\int \Psi$, we have lifting of trivial cofibrations whose domain $(O,A)$ becomes a cofibrant object $u(O,A)$ in $\int \Psi$. As a result, $O$ is only required to satisfy the property that $U(O)$ is cofibrant in $\cat N$. Similarly, for the vertical semi-model structures on $\Phi(O)$, where $u(O)$ is cofibrant in $\cat N$, any morphism $f:A\to D$ in $\Phi(O)$ gives rise to a global morphism $(O,A)\to (O,D)$. If $A$ is cofibrant in $\Psi(U(O))$ then $(O,A)$ is cofibrant in $\int \Psi$ and so $(id,f)$ admits a global factorization. Proposition \ref{prop:globaltovertical} then guarantees the existence of the vertical semi-model structure over $\Psi(U(O))$.
\end{proof}

\begin{remark} \label{remark:u-cofibrancy-necessary}
The $u$-cofibrancy condition in the preceding proposition is necessary. If $\Phi$ assigns to each symmetric operad $O$, its category of $O$-algebras, then $\int \Phi$ has a global semi-model structure, but not every category of $O$-algebras has a transferred semi-model structure. For example, if $O = Com$ and $\M$ is the projective model structure on chain complexes over a field of characteristic 2, then $Com$-algebras do not admit a transferred semi-model structure.
\end{remark}

\subsection{Properness}

Our next theorem concerns properness of the vertical and horizontal model structures coming out of the global model structure. Observe that the notion of properness (left or right) does not depend on the existence of a full model structure. 
If the homotopical structure on $\Phi$ is obtained as a transfer along a forgetful morphism $(U,u): \Phi \to \Psi$ it also makes sense to speak about relative left properness \cite{batanin-berger}. 

\begin{theorem} \label{thm:properness}
Let $\Phi$ be homotopically structured, and suppose there exist global and vertical pushouts in $\int \Phi$. If $\int \Phi$ is left proper then $\Phi(O)$ is left proper for any $O.$ If, in addition, there exists an initial object $i_O$ in each $\Phi(O)$, then $\cat{B}$ is left proper as well. 

Dually, suppose there exist global and vertical pullbacks in $\int \Phi$, and $\phi^*$ is a right adjoint for every morphism $\phi$ in $\cat B$. If $\int \Phi$ is right proper then $\Phi(O)$ is right proper for any $O.$ If, in addition, there exists a terminal object $t_O$ in $\Phi(O)$ then $\cat{B}$ is right proper as well.  
\end{theorem}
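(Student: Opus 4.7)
The plan is to reduce both halves of the theorem to the hypothesized properness of $\int \Phi$, by lifting the relevant pushout (or pullback) diagrams from $\Phi(O)$ or $\cat B$ into $\int \Phi$, invoking properness there, and projecting the resulting global weak equivalence back to the base or the fiber. Throughout, I will exploit that $p:\int \Phi \to \cat B$ sits in an adjoint string $i \dashv p \dashv r$ from Figure (\ref{figure:adjoints}), so that $p$ preserves both limits and colimits, $i$ preserves pushouts, and $r$ preserves pullbacks.

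For the statements about $\Phi(O)$, I will use the fiber embedding $X \mapsto (O,X)$. A vertical morphism $f$ lifts to $(id,f)$, which is a global weak equivalence, cofibration, or fibration exactly when $f$ is vertical of the corresponding type, by the definitions together with Proposition \ref{prop:globaltovertical}(ii). The ``vertical pushouts/pullbacks in $\int \Phi$'' hypothesis is precisely what is needed to guarantee that the global pushout (resp. pullback) of a diagram of such morphisms has the form $(O,D)$, where $D$ is the vertical pushout (resp. pullback) in $\Phi(O)$. Applying left (resp. right) properness of $\int \Phi$ and extracting the second component then produces the desired vertical weak equivalence.

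For the statements about $\cat B$, I would lift along the adjoint embeddings: use $r$ for right properness and $i$ for left properness. For right properness, $r$ preserves pullbacks, the morphism $r(\alpha):(O_1,t_1)\to(O_2,t_2)$ is automatically a global weak equivalence (its second component $t_1 \to \alpha^*(t_2)$ is an isomorphism because $\alpha^*$, being a right adjoint, preserves terminal objects), and $r(\beta)$ is a global fibration (both components being fibrations). Right properness of $\int \Phi$ then gives a global weak equivalence whose first component, extracted via $p$, is the required horizontal weak equivalence $P \to O_3$. For left properness, applying $i$ to the pushout uses that $i$ preserves pushouts, that $i(\gamma)$ is a global cofibration by Proposition \ref{prop:globaltovertical}(ii), and that $p$ preserves pushouts so that the first component of the resulting pushout in $\int \Phi$ is $O_3 \to O_4$.

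The hard part will be verifying that $i(\alpha):(O_1,i_1)\to(O_2,i_2)$ is a global weak equivalence when $\alpha$ is a horizontal weak equivalence. The first component is fine, but the second, namely the unit $i_1 \to \alpha^*(i_2)$, is not obviously a vertical weak equivalence, since a right adjoint $\alpha^*$ preserves terminal but not necessarily initial objects. I plan to resolve this either by exploiting extra structure on $\alpha^*$ in the intended operadic examples (so that this canonical map is indeed a vertical weak equivalence), or by running the entire argument with $r$ in place of $i$ under the corresponding terminal-object hypothesis. Once this point is settled, left properness of $\int \Phi$ yields the horizontal weak equivalence $O_3 \to O_4$ and completes the proof.
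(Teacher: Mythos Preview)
Your approach to left properness of $\Phi(O)$ is correct and in fact more direct than the paper's. You observe that the global pushout of the vertical span $(O,A)\leftarrow(O,X)\to(O,Y)$ is simply $(O,P)$ with $P$ the vertical pushout; this is an easy universal-property check (any cocone $(O',Z')$ forces equal first components $\phi$, and the second components assemble into a cocone on $\phi^*Z'$ in $\Phi(O)$, which $P$ handles). The paper instead forms the global pushout $(O',A')$ abstractly, extracts a retraction $\psi\phi=id_O$, and then verifies $\phi^*(A')\cong P$ by a separate universal-property argument; but since $(O,P)$ already has the universal property, that extra work is unnecessary. For right properness of $\Phi(O)$ the analogous identification of $(O,P)$ as the global pullback uses that each $\phi^*$ preserves pullbacks, which the paper also notes explicitly.

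You are right to flag the step for left properness of $\cat B$: under the bare hypotheses the map $i_1\to\alpha^*(i_2)$ need not be a vertical weak equivalence, so $i(\alpha)$ need not be a global weak equivalence. The paper dismisses this case as ``even easier'' without detail, so the gap is shared. Your second proposed fix, via $r$, actually works and does not require $r$ to preserve pushouts: apply $r$ to the span, note that $r(\alpha)$ is a global weak equivalence (since $\alpha^*$ is a right adjoint and preserves terminal objects) and that $r(\gamma)$ is a global cofibration (Proposition~\ref{prop:r-pres-cof}), form the global pushout $Q$, and conclude by left properness that $r(O_3)\to Q$ is a global weak equivalence. Since $p$ has right adjoint $r$ it preserves pushouts, so $p(Q)=O_4$ and the first component $O_3\to O_4$ is the required horizontal weak equivalence. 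This route needs terminal rather than initial objects; in the operadic applications both exist, and moreover $i_1\to\alpha^*(i_2)$ is essentially the arity-zero component of $\alpha$ and hence already a weak equivalence, so either resolution closes the gap there.
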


\begin{proof}
Let $w:(O,X)\to (O,Y)$ be a vertical weak equivalence and $q:(O,X)\to (O,A)$ a vertical cofibration.  Form the following pushout diagram:
\begin{align*}
\xymatrix{X \ar[r]^w \ar[d]_q \po & Y\ar[d]^\beta \\ A \ar[r]_\alpha & P}
\end{align*}

We must show the morphism $\alpha:A\to P$ in the pushout diagram is a weak equivalence. In order to use the fact that $\int \Phi$ is left proper, consider the following pushout diagram in $\int \Phi$:
\begin{align} \label{diagram-pushout-pairs}
\xymatrix{(O,X) \ar[r]^{(1,w)} \ar[d]_{(1,q)} \po & (O,Y)\ar[d]^{(\phi_1,h)} \ar@/^1pc/[ddr]^{(1,\beta)} &
 \\ (O,A) \ar[r]_{(\phi_2,f)} \ar@/_1pc/[drr]_{(1,\alpha)} & (O',A') \ar@{..>}[dr]^{(\psi,g)} & \\
& & (O,P)}
\end{align}

First, note that commutativity of the square implies $\phi_1 \circ 1 = \phi_2 \circ 1$ so $\phi_1 = \phi_2$ and we will denote this morphism $\phi$ from now on. By definition of $P$, the pushout diagram morphisms to $(O,P)$, and the morphism on first components is the identity. This induces a morphism $(\psi,g):(O',A')\to (O,P)$ with the property that $\psi \circ \phi = id_O$. This implies $\phi^*(\psi^*(Z))\cong Z$ for all  $Z$.

We claim that $P \cong \phi^*(A')$. Because $\int \Phi$ is left proper, the morphism $(\phi,f)$ is a global weak equivalence and so $f:A\to \phi^*(A')\cong P$ is a vertical weak equivalence  as required.

To verify this claim we use the universal property of pushouts to exhibit inverse morphisms going both ways. First, $A$ and $Y$ both map to $\phi^*(A').$  This induces a morphism $\gamma:P\to \phi^*(A')$ by the universal property of $P$.

Next, the morphism  $(\psi,g):(O',A')\to (O,P)$ produces a morphism $\phi^*(g):\phi^*(A')\to \phi^*(\psi^*(P))\cong P$. 

We now show that $\phi^*(A')$ satisfies the universal property of the vertical pushout, so that it must be isomorphic to $P$. Observe from (\ref{diagram-pushout-pairs}) that $\beta = \phi^*(g)\circ h$ and $\alpha = \phi^*(g)\circ f$, by commutativity of the two curved triangles. For any   $Z$ that fits into a diagram of the form:

\begin{align*}
\xymatrix{X \ar[r]^w \ar[d]_q \po & Y \cong \phi^*(\psi^*(Y)) \ar[d]^\beta \ar@/^1pc/[ddr] &
\\ A \cong \phi^*(\psi^*(A)) \ar[r]_\alpha \ar@/_1pc/[drr] & P \ar[dr] & \\
& & Z}
\end{align*}

the induced morphism $P\to Z$ can be written $\phi^*(\psi^*(P))\to Z$, and hence there is a morphism $\phi^*(A')\to \phi^*(\psi^*(P))\to Z$ coming from precomposition with $\phi^*(g)$. Furthermore, because $\beta = \phi^*(g)\circ h$ and $\alpha = \phi^*(g)\circ f$, the following diagram commutes

\begin{align*}
\xymatrix{X \ar[r]^w \ar[d]_q \po & Y \ar[d]^\beta \ar@/^1pc/[ddr] &
\\ A \ar[r]_\alpha \ar@/_1pc/[drr] & \phi^*(A')\ar[dr] & \\
& & Z}
\end{align*}

This proves the universal property for $\phi^*(A')$ and so we see that $\phi^*(A')\cong P$ as required.

The claims for right properness is analogous but we need an additional fact that $\phi^*$ preserves vertical pullbacks since it is a right adjoint. We leave this as an exercise. 
The claims about properness of $\cat{B}$ are even easier.
\end{proof}

Right properness is important for the theory of right Bousfield localization, which is further explored in operadic contexts in \cite{white-yau2,white-yau4}. Similarly, left properness is important for left Bousfield localization, further explored in \cite{batanin-white-eilenberg-moore, white-localization, white-yau, white-yau-co}.

\begin{theorem} \label{thm:rel-left-proper}
Suppose $\Phi$ and $\Psi$ are homotopically structured, and $\int \Phi$ is relatively left proper over $\int \Psi$ (Definition \ref{defn:relatively-left-proper}), along a forgetful morphism $(U,u): \Phi\to \Psi$. Let $O$ be $u$-cofibrant. Then $\Phi(O)$ is relatively left proper over $\Psi(U(O))$, and $\cat B$ is relatively left proper over $\cat N$, if every $\Phi(O)$ and $\Psi(O)$ has an initial object. The dual statements for right properness also hold.
\end{theorem}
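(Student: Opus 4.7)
The plan is to imitate the proof of Theorem \ref{thm:properness}, lifting both the vertical and horizontal diagrams into the Grothendieck construction $\int \Phi$ and invoking relative left properness of $\int \Phi$ in place of full left properness. The new ingredient is careful bookkeeping of which lifted pairs forget, under the induced functor $\int \Phi \to \int \Psi$, to cofibrant objects of $\int \Psi$, using the characterization of cofibrant pairs from the remark following Proposition \ref{prop:stanculescu}. Once this bookkeeping is done, the pushout identifications at the end of Theorem \ref{thm:properness} carry over verbatim.

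For the vertical statement, I would start with a vertical weak equivalence $w\colon X\to Y$ in $\Phi(O)$ whose images $u(X),u(Y)$ are cofibrant in $\Psi(U(O))$, and a vertical cofibration $q\colon X\to A$. Form the global pushout of $(1,w)$ along $(1,q)$ exactly as in diagram (\ref{diagram-pushout-pairs}); the map $(1,q)$ is a global cofibration by Proposition \ref{prop:globaltovertical}(ii) and $(1,w)$ is a global weak equivalence by definition. Since $O$ is $u$-cofibrant, $U(O)$ is cofibrant in $\cat N$, so together with the cofibrancy of $u(X)$ and $u(Y)$ the pairs $(U(O),u(X))$ and $(U(O),u(Y))$ are cofibrant in $\int \Psi$ by the characterization of cofibrant pairs. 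Relative left properness of $\int \Phi$ therefore makes the pushout morphism $(\phi,f)\colon (O,A)\to (O',A')$ a global weak equivalence; the identification $\phi^*(A')\cong P$ and two-out-of-three, as in Theorem \ref{thm:properness}, give the required vertical weak equivalence $A\to P$.

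For the horizontal statement, I would lift a horizontal weak equivalence $v\colon O_1\to O_2$ between $u$-cofibrant objects and a horizontal cofibration $\phi\colon O_1\to O_1'$ into $\int \Phi$ using the initial objects: set $A_2 = i_{O_2}$, $A_1 = v^*(A_2)$, and $A_1' = \phi_!(A_1)$, so that $(v,\mathrm{id})\colon (O_1,A_1)\to (O_2,A_2)$ is automatically a global weak equivalence and $(\phi,\mathrm{id})\colon (O_1,A_1)\to (O_1',A_1')$ is a global cofibration by Proposition \ref{prop:stanculescu}. Apply the projection $p$, which preserves colimits since it is left adjoint to $r$, to identify the horizontal pushout of $v$ along $\phi$ with the first coordinate of the global pushout; relative left properness of $\int \Phi$ then produces the desired horizontal weak equivalence. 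The dual statements for right properness are handled symmetrically using the right adjoint $r\colon O\mapsto (O,t_O)$ together with the fact that each $\phi^*$ is a right adjoint and so preserves terminal objects and vertical pullbacks.

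The main obstacle is to check that, in the horizontal case, the pairs $(O_i,A_i)$ forget to cofibrant objects of $\int \Psi$: by $u$-cofibrancy, $U(O_i)$ is cofibrant in $\cat N$, but we also need $u(A_i)$ cofibrant in $\Psi(U(O_i))$. This cofibrancy follows once we know that $u$ sends initial objects of $\Phi(O)$ to cofibrant objects of $\Psi(U(O))$, an assumption that holds in our operadic examples (where initial objects in the fibres are represented by the free functor applied to an initial object). In case this fails, one can replace the initial-object strategy by choosing $A_1 \in \Phi(O_1)$ with $u(A_1)$ cofibrant and $A_2 = v_!(A_1)$, using rectification of the derived adjunction $(v_!,v^*)$ between $u$-cofibrant $\Phi$-fibres.
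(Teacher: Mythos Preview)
Your proposal is correct and follows the same approach as the paper. For the vertical statement, your argument is essentially identical to the paper's: lift the pushout square to $\int\Phi$, use $u$-cofibrancy of $O$ together with cofibrancy of $u(X),u(Y)$ to see that $(O,X)$ and $(O,Y)$ forget to cofibrant objects of $\int\Psi$, apply relative left properness of $\int\Phi$, and then invoke the identification $\phi^*(A')\cong P$ from Theorem~\ref{thm:properness}. The paper dispatches the horizontal case and the dual statements with ``the remaining statements are similar, and are left as an exercise for the reader,'' whereas you spell out a concrete lift via initial objects and the projection $p$; your observation that one also needs $u$ to send initial objects of the fibres to cofibrant objects (or an equivalent workaround) is a genuine subtlety that the paper elides.
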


\begin{proof}
Suppose we are given a pushout diagram in $\Phi(O)$,
\begin{align*}
\xymatrix{X \ar[r]^w \ar[d]_q \po & Y\ar[d]^\beta \\ A \ar[r]_\alpha & P}
\end{align*}
in which $U(X)$ and $U(Y)$ are cofibrant. We wish to show $A\to P$ is a weak equivalence, so we write down the corresponding pushout diagram in $\int \Phi$:
\begin{align}
\xymatrix{(O,X) \ar[r]^{(1,w)} \ar[d]_{(1,q)} \po & (O,Y)\ar[d]^{(\phi_1,h)}\\
 (O,A) \ar[r]_{(\phi_2,f)} & (O',A') }
\end{align}

In order to apply our hypothesis that $\int \Phi$ is relatively left proper we have to know that $U((O,X))$ and $U((O,Y))$ are cofibrant in $\int \Psi$, 
which follows from the $u$-cofibrancy of $O$ and the assumption about $X$ and $Y$.
Since $\int \Phi$ is relatively left proper, the morphism $(\phi_2,f)$ across the bottom of (\ref{diagram-pushout-pairs}) is a weak equivalence. The same proof as in the left proper case proves that $A\to P$ is a vertical weak equivalence. 
The remaining statements are similar, and are left as an exercise for the reader.
\end{proof}

We conclude with an application that, to the best of our knowledge, has not appeared previously in the literature. We adopt the notation $Gr(SOp,alg)$ for the polynomial monad whose algebras are pairs $(O,A)$ where $O$ is a symmetric operad and $A$ is an $O$-algebra. Similarly, $Gr(SOp,mod)$ is for the case where $A$ is a left $O$-module, and $Gr(NOp,\ast)$ is for non-symmetric operads (where $\ast$ can be either $alg$ or $mod$). All four of these monads yield global model structures over monads $\int \Psi$ that describe the relevant categories of collections (i.e., symmetric or non-symmetric) and the fibers.

\begin{corollary}
Fix a field $k$ of characteristic zero and a set $\fC$ of colors. Let $\M$ be the projective model structure on (bounded or unbounded) chain complexes over $k$. Then the global model structure $\int \Phi$ is left proper (for any of the four cases above), as are the model structures on $\fC$-colored operads (symmetric or non-symmetric) and on $O$-algebras and $O$-modules for any $\fC$-colored operad $O$.
\end{corollary}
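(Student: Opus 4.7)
The plan is to proceed in two stages. First, I would establish left properness of the global model structure on $\int \Phi$ for each of the four cases $Gr(SOp,alg)$, $Gr(SOp,mod)$, $Gr(NOp,alg)$, $Gr(NOp,mod)$. Second, I would transfer left properness to the horizontal structure on $\cat B$ and the vertical structures on $\Phi(O)$ using Theorem \ref{thm:properness}.

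For the first stage, the key observation is that the projective model structure on chain complexes over a field $k$ of characteristic zero is compactly generated, satisfies the monoid axiom, and is strongly $h$-monoidal: every chain complex is flat over $k$, so tensoring with any object preserves weak equivalences, and the pushout-product hypotheses are standard. Moreover, in characteristic zero the group algebras $k[\Sigma_n]$ are semisimple, so every $\Sigma_n$-object is projective; equivalently, every symmetric collection is $\Sigma$-cofibrant, and the filtration arguments of \cite{companion} go through without needing a constant-free hypothesis on the arity zero piece. The final sentence of Theorem \ref{thm:global-model-examples}, together with the symmetric extensions in \cite{companion}, then delivers left properness of the global model structure on $\int \Phi$ in each of the four cases.

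For the second stage, I would verify the hypotheses of Theorem \ref{thm:properness}: global and vertical pushouts exist in $\int \Phi$ by cocompleteness of the underlying categories of operads and their algebras/modules; and each fiber $\Phi(O)$ admits an initial object (given by the initial $O$-algebra, respectively the zero left $O$-module). Theorem \ref{thm:properness} then transports left properness from $\int \Phi$ to the horizontal model structure on $\fC$-colored operads (symmetric or non-symmetric) and to every vertical model structure on $O$-algebras or $O$-modules, for any $\fC$-colored operad $O$.

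The main obstacle will be the symmetric case of the first stage: making rigorous the claim that the characteristic zero hypothesis removes the constant-free restriction in items (5) and (6) of Theorem \ref{thm:global-model-examples}. The essential input is that the coinvariant functors $(-)_{\Sigma_n}$ are exact in characteristic zero, so no obstruction arises from non-free $\Sigma$-actions when running the filtration for pushouts of free algebra extensions. Once this technical point is handled---either by directly reproducing the filtration argument or by checking quasi-tameness of the polynomial monads $Gr(SOp,alg)$ and $Gr(SOp,mod)$ in this setting---the remaining steps are routine applications of Theorems \ref{thm:global-model-examples} and \ref{thm:properness}.
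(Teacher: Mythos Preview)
Your two-stage outline matches the paper's approach: establish left properness of $\int \Phi$, then descend to base and fibers via Theorem \ref{thm:properness}. The second stage is identical to the paper's. For the first stage, your identification of the key fact---that every symmetric sequence over a characteristic-zero field is projectively $\Sigma$-cofibrant (Maschke)---is exactly the paper's starting point, and your appeal to strong $h$-monoidality and Theorem \ref{thm:global-model-examples} does handle the two non-symmetric cases directly.

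However, one of your two proposed routes for the symmetric case is a dead end. Quasi-tameness is a combinatorial property of a polynomial monad, independent of the base category $\M$; the paper itself records (Section \ref{sec:non-tame}) that $Gr(SOp,alg)$ and $Gr(SOp,mod)$ are \emph{not} quasi-tame, so you cannot discharge the obstacle that way even in characteristic zero. You must take your other option, the direct filtration argument, and here the paper is more specific than your sketch. Rather than the tree filtration of \cite{companion}, it invokes the enveloping-operad filtration of \cite[5.3.2]{white-yau}: since every symmetric sequence is projectively cofibrant, the enveloping operad $P_A$ is $\Sigmac$-projectively cofibrant at every profile, so each stage of the filtration of a free-algebra pushout is a pushout along a genuine (trivial) cofibration in $\M$. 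This yields $K$-adequacy for the class of cofibrations in the sense of \cite{batanin-berger}, and then relative left properness follows from \cite[Theorem 2.14]{batanin-berger} (equivalently \cite[Lemma 3.1.7]{hackney-proper}). The upgrade from relative to full left properness is immediate because every object is $u$-cofibrant. So your plan is correct once you discard the quasi-tameness branch, but the filtration step needs to be made concrete along these lines rather than left as a pointer to \cite{companion}.
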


\begin{proof}
The key fact is that every symmetric sequence in $\M$ is projectively cofibrant. This essentially follows from Maschke's theorem, and is explained in \cite[Corollary 5.2.4]{white-yau6}. One now mimics the proof that the category of symmetric operads is relatively left proper \cite{hackney-proper}. This reduces the question to the analysis of a pushout of a span of the form $A \gets F(K) \to F(L)$ where $F$ is the the free $T$-algebra functor, $A$ is a $T$-algebra, and the morphism $i:K\to L$ is a (trivial) cofibration in the model category $\int \Psi$ from which $\int \Phi$ is transferred. To prove relative left properness, the authors of \cite{hackney-proper} reduce to the case where $A$ is $u$-cofibrant. Since every polynomial monad is given by a $\Sigma$-cofibrant colored operad (which we denote by $P$ for the purposes of this proof), we can sidestep the filtration via trees of \cite{hackney-proper}, and instead use the filtration of \cite{white-yau} (5.3.2), which relies on the enveloping operad $P_A$.

Because every symmetric sequence is projectively cofibrant, $P_A\duc$ is $\Sigmac$-projectively cofibrant, meaning at every step of the filtration we are taking the pushout of a (trivial) cofibration in $\M$, since $i$ is a (trivial) cofibration. In the language of \cite{batanin-berger}, this proves that each of the monads in question is $K$-adequate for the class $K$ of cofibrations. Now the proof of \cite[Lemma 3.1.7]{hackney-proper} (or, equivalently, \cite[Theorem 2.14]{batanin-berger}) goes through directly. All the morphisms denoted $h$ in \cite[Lemma 3.1.7]{hackney-proper} are cofibrations and all the morphisms denoted $f$ are weak equivalences, since all objects in $\M$ are cofibrant. We conclude that $\int \Phi$ is relatively left proper. However, because every $A$ is $u$-cofibrant (again, since all objects in $\M$ are cofibrant), we see that $\int \Phi$ is left proper. For the corresponding statements about horizontal model structures (that is, categories of operads) and vertical model structures (categories of $O$-algebras and $O$-modules) we appeal to Theorem \ref{thm:properness}.
\end{proof}

\begin{remark}
The main reason one would like the global model structure $\int \Phi$ to be left proper is to left Bousfield localize it. We have proven a result connecting global localizations with horizontal and vertical localizations, and plan to include that result in a future work with several planned applications of global localizations.
\end{remark}

\subsection{Rectification}

A common problem in homotopy theory is to lift Quillen equivalences of model categories to Quillen equivalences of $T$-algebras, for suitable monads $T$. For example, if $T$ is given by a colored symmetric operad, then a general solution to this problem (as well as several applications and examples) is provided in \cite{white-yau3}.  A more general approach to lifting Quillen equivalences goes via homotopy Beck-Chevalley squares \cite[Theorem 4.2.2]{Reedy-paper}.

In this section, we study the question from the point of view of the global model structure. We shall determine when a weak equivalence $O\to O'$ in $\cat B$ induces a Quillen equivalence on vertical (semi-)model structures $\Phi(O)$ and $\Phi(O')$. Throughout, assume that $\Phi$ and $\Psi$ are homotopically structured and that $(U,u): \Phi\to \Psi$ is a forgetful morphism. The following theorem has already been applied in the PhD thesis of the second author.

\begin{theorem}\label{theoremquillenequivfibers}
Let $\int \Phi$ be homotopically structured.

\begin{enumerate}
\item If $\int \Phi$ admits the global model structure, and it's left proper, then each $\phi^*$ has a Quillen left adjoint. For a horizontal weak equivalence $\phi$, the unit of the adjunction $(\phi^*, \phi_!)$ is a weak equivalence. Moreover, if $\phi^*$ reflects weak equivalences and the unique morphism $\tau: i_O \to \phi^*(i_{O'})$ is a weak equivalence then $(\phi^*,\phi_!)$ is a Quillen equivalence. 

\item Suppose $\int \Phi$ admits a global model structure or semi-model structure over $\int \Psi$ transferred along $(U,u):\Phi \to \Psi,$ that is cofibrantly generated, and relatively left proper. Assume $\Phi(O)$ and $\Phi(O')$ admit vertical model or semi-model structures over $\Psi(U(O))$ and $\Psi(U(O'))$, respectively. Assume $O$ and $O'$ are relatively cofibrant. Then any $\phi:O \to O'$ induces a Quillen pair $(\phi^*,\phi_!)$. Furthermore, if $\phi$ is a horizontal weak equivalence, and $\phi^*$ reflects weak equivalences, then $(\phi^*, \phi_!)$ is a Quillen equivalence of semi-model categories.

\item The conclusion of (2) remains valid if $\int \Phi$ admits a cofibrantly generated global semi-model structure and $O$ and $O'$ are cofibrant in $\int \Phi$.
\end{enumerate}
\end{theorem}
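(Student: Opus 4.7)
My plan for all three parts is to reformulate the unit as the second component of a global morphism and to invoke Proposition \ref{prop:stanculescu} together with (relative) left properness. Theorem \ref{thm:global-to-local}(2) already establishes that $(\phi_!, \phi^*)$ is a Quillen pair in both the model and semi-model settings, so the remaining content of (1)--(3) is to verify that the unit is a vertical weak equivalence whenever $\phi$ is a horizontal weak equivalence, and separately, under the extra hypotheses, that the derived counit is a weak equivalence.

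For part (1), I would fix a cofibrant $A \in \Phi(O)$ and consider the global morphism $(\phi, \eta_A) \colon (O, A) \to (O', \phi_! A)$, whose second component is the unit. Its adjoint mate on the second component is $\mathrm{id}_{\phi_! A}$, which is always a (trivial) cofibration, so Proposition \ref{prop:stanculescu} immediately gives that $(\phi, \eta_A)$ is a global trivial cofibration whenever $\phi$ is a horizontal trivial cofibration, settling that case. For an arbitrary horizontal weak equivalence I would factor $\phi = p \circ j$ with $j \colon O \to O''$ a horizontal trivial cofibration and $p \colon O'' \to O'$ a horizontal trivial fibration, so that the unit decomposes as $A \xrightarrow{\eta^j_A} j^* j_! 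A \xrightarrow{j^*(\eta^p_{j_! A})} j^* p^* p_! j_! A$. The first map is a vertical weak equivalence by the trivial-cofibration case, and since $j^*$ preserves weak equivalences it suffices to show that $\eta^p_B \colon B \to p^* p_! B$ is a vertical weak equivalence at the cofibrant object $B := j_! A$. For this, I plan to apply left properness to the pushout square
\[
\xymatrix{
(O'', i_{O''}) \ar[r] \ar[d]_{(p, \tau_p)} & (O'', B) \ar[d]^{(p, \eta^p_B)} \\
(O', i_{O'}) \ar[r] & (O', p_! B),
}
\]
which is a genuine pushout in $\int \Phi$ because $p_!$ preserves initial objects so that $p_! i_{O''} \cong i_{O'}$. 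The top arrow is a global cofibration since $B$ is cofibrant in $\Phi(O'')$, and left properness will then force $(p, \eta^p_B)$ to be a global weak equivalence as soon as $(p, \tau_p)$ is one. The derived-counit claim for the Quillen equivalence conclusion is then standard: apply $\phi^*$ to the counit $\phi_! \phi^* B \to B$ at a fibrant $B$, use the triangle identity together with the unit-is-weak-equivalence conclusion, and invoke the hypothesis that $\phi^*$ reflects weak equivalences.

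Parts (2) and (3) follow the identical template, but with Proposition \ref{prop:stanculescu} invoked in its semi-model version, in which the (trivial) cofibration characterization applies only to morphisms whose domain is cofibrant -- in the ambient $\int \Psi$ for (2), or in $\int \Phi$ for (3). The $u$-cofibrancy of $O, O'$ in (2) and their cofibrancy in $\int \Phi$ in (3) are precisely what guarantee this, while relative left properness of $\int \Phi$ over $\int \Psi$ replaces left properness in the pushout step of (2). The main obstacle I anticipate in all three parts is verifying that the left arrow $(p, \tau_p)$ of the pushout square is a global weak equivalence; this reduces to showing that $\tau_p \colon i_{O''} \to p^* i_{O'}$ is a vertical weak equivalence for each trivial fibration $p$ arising in the factorization, which is exactly the content of the ``$\tau$ is a weak equivalence'' hypothesis appearing in the Quillen-equivalence part of the statement. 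In the operadic and monoidal applications driving the paper, $\tau_p$ is in fact an isomorphism because both $p_!$ and $p^*$ preserve initial objects there, so the hypothesis is automatic in the cases of primary interest.
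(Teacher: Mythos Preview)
Your approach is correct and rests on the same central device as the paper: realize $\phi_! A$ as the pushout of the span $(O', i_{O'}) \leftarrow (O, i_O) \rightarrow (O, A)$ in $\int\Phi$ and invoke (relative) left properness to conclude that the bottom map $(\phi,\eta_A)$, hence the unit $\eta_A$, is a weak equivalence. The paper applies this square directly to $\phi$, with $(\phi,\tau)$ as the weak equivalence being pushed out along the cofibration $(O,i_O)\to(O,A)$. You instead factor $\phi = p\circ j$ and handle the trivial-cofibration part $j$ via Proposition~\ref{prop:stanculescu}. That observation is pleasant but ultimately buys nothing: the trivial-fibration part $p$ still requires the identical pushout-and-properness argument, and the detour leaves you needing $\tau_p$ (for a $p$ manufactured by factorization) to be a vertical weak equivalence, whereas the stated hypothesis concerns $\tau = \tau_\phi$; these are not a priori interchangeable. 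The paper's direct route matches the hypothesis exactly and is shorter.

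For parts (2) and (3) the paper also appeals to the cube lemma for semi-model categories (Lemma~\ref{lemma:cube-lemma}) to justify both that $\phi_!$ is left Quillen and that the pushout step survives under mere relative cofibrancy of the objects involved. Your sketch gestures at ``relative left properness replaces left properness'' but would need this lemma explicitly, since ordinary left properness statements are not available in the semi-model setting and one must instead control pushouts of weak equivalences between (relatively) cofibrant objects.
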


\begin{proof} We begin with (1). Consider the following pushout in $\int \Phi$, where $A$ is cofibrant, so the left vertical morphism is a cofibration.

\begin{align*}
\xymatrix{(O,i_O) \ar[r]^{(\phi,\tau)} \ar[d] \po & (O',i_{O'}) \ar[d] \\ (O,A) \ar[r]_{(\phi,\epsilon)} & (O',P)}
\end{align*}
This pushout can be computed using Lemma \ref{lemma:pushout in Gr}. It is not hard to check that $P$ has the universal property of $\phi_!(A)$ and that this procedure defines a left adjoint to $\phi^*$.
Furthermore, $\phi_!$ preserves (trivial) cofibrations, since, for a cofibration $g: A\to B$ in $\Phi(O)$, $\phi_!(g)$ is obtained as the colimit of a projective cofibration in the diagram category $(\int \Phi)^D$ where $D$ is the span category $\bullet \gets \bullet \to \bullet$. The morphism of diagrams in question is a projective cofibration because it is an objectwise cofibration, because $(O,i_O)\to (O,A)$ and $(O,i_O)\to (O,B)$ are cofibrations, and because $\int \Phi$ is left proper.

If $\phi$ is a horizontal weak equivalence, then the unit $\eta: A\to \phi^*P \cong \phi^*\phi_! A$ is a weak equivalence, by the left properness of $\int \Phi$. Now suppose $\phi^*: \Phi(O') \to \Phi(O)$ reflects weak equivalences. Then the counit $\epsilon: \phi_! \phi^* B\to B$ is a weak equivalence, because the pushout $(O,\phi^*(B))\to (O',\phi_! \phi^*(B))$, of $(\phi,\tau)$, is a weak equivalence. It follows that the adjunction is a Quillen equivalence.

For (2), we repeat the argument above. Because $\int \Phi$ is only a semi-model category over $\int \Psi$, relative cofibrancy of $O$ and $O'$ are required for the argument that $\phi_!$ is left Quillen, and for the argument that $\eta$ and $\epsilon$ are weak equivalences (because only weak equivalences between relatively cofibrant objects are well-behaved in relatively left proper semi-model categories).

For (3), we follow the same proof. Now, since $\int \Phi$ has less structure, one needs the stronger cofibrancy assumption on $O$ and $O'$. For both (2) and (3), the argument reduces to the cube lemma for semi-model categories, Lemma \ref{lemma:cube-lemma}.
\end{proof}

We will soon see numerous applications of this result, in the settings of modules, operads, and 2-monads. To make $\int \Phi$ left proper, we rely on \cite[Theorem 2.14]{batanin-berger} which relies on the notions of $h$-monoidality of \cite[Definition 1.11]{batanin-berger}. Specifically, \cite[Theorem 8.1]{batanin-berger} and its generalization in \cite[Theorems 2.14, 4.17]{companion} can be used to make global full model structures over base model categories $\M$ satisfying the monoid axiom, which are moreover left proper when $\M$ is strongly $h$-monoidal, and relatively left proper when $\M$ is $h$-monoidal. The proof above relied on the following two lemmas. The first is well-known, and the second is a straightforward extension of \cite[Lemma 5.2.6]{hovey-book} to the context of semi-model categories.

\begin{lemma} \label{lemma:pushout in Gr}
Let the following be a pushout diagram in $\int \Phi$
\[
\xymatrix{
(A,B) \ar[r]^{(\psi,f)} \ar[d]_{(\phi,g)} \po & (A',B') \ar[d]^{\gamma} \\
(A'',B'') \ar[r]_{\alpha} & (C,D)
}
\]
Then in $\Phi(C)$, $D$ can be realized as the push forward of the pushout of the following span in $\Phi(A)$: $\phi^*(B'') \gets B \to \psi^* B'$. Furthermore, $D$ is also the pushout of the span $\alpha_!(B'') \gets \alpha_! \phi_!(B) = \gamma_! \psi_!(B) \to \gamma_!(B')$.
\end{lemma}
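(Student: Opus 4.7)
The plan is to verify the universal property of the pushout $(C, D)$ in $\int \Phi$, starting from the second (intrinsic) formulation. First, form the base pushout $C = A' \sqcup_A A''$ in $\cat{B}$, with coprojections $\gamma_1: A' \to C$ and $\alpha_1: A'' \to C$, and set $\beta := \gamma_1 \psi = \alpha_1 \phi: A \to C$. Since $\int \Phi$ is cocomplete and every $\phi^*$ is a right adjoint (cf.\ Subsection \ref{subsec:grothendieck-preliminaries}), each fiber is cocomplete and admits left adjoints $\gamma_{1,!}, \alpha_{1,!}, \phi_!, \psi_!, \beta_!$, with $\beta_! = \alpha_{1,!}\phi_! = \gamma_{1,!}\psi_!$ by uniqueness of left adjoints.

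Define
\[
D := \alpha_{1,!}(B'') \sqcup_{\beta_!(B)} \gamma_{1,!}(B')
\]
in $\Phi(C)$, where the two legs are $\alpha_{1,!}(\tilde g)$ and $\gamma_{1,!}(\tilde f)$ for the mates $\tilde g: \phi_!(B) \to B''$ of $g$ and $\tilde f: \psi_!(B) \to B'$ of $f$. The coprojections into $D$ transpose under the adjunctions to morphisms $B' \to \gamma_1^*(D)$ and $B'' \to \alpha_1^*(D)$, which assemble with $\gamma_1, \alpha_1$ into morphisms of pairs that agree after precomposition with $(\psi, f)$ and $(\phi, g)$ by construction. To check universality, any test cocone $(A', B') \to (Z, E) \leftarrow (A'', B'')$ agreeing on $(A, B)$ yields a unique $\delta: C \to Z$ on bases, and the fiber components $h_1, h_2$ transpose to morphisms $\gamma_{1,!}(B'), \alpha_{1,!}(B'') \to \delta^*(E)$ in $\Phi(C)$. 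The compatibility $\psi^*(h_1)\circ f = \phi^*(h_2)\circ g$ in $\Phi(A)$ transposes via the three adjunctions (using $\beta_! = \alpha_{1,!}\phi_! = \gamma_{1,!}\psi_!$) into equality of the two composites $\beta_!(B) \rightrightarrows \delta^*(E)$; the universal property of the pushout defining $D$ then produces the unique $(C, D) \to (Z, E)$, establishing the claim.

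The first formulation is the same statement viewed from $\Phi(A)$: the compatible data $(h_1, h_2)$ above corresponds, via the adjunctions $\phi_! \dashv \phi^*$ and $\psi_! \dashv \psi^*$, to a cocone on the span $\phi^*(B'') \xleftarrow{g} B \xrightarrow{f} \psi^*(B')$ in $\Phi(A)$ valued in $\beta^*\delta^*(E)$, so $D$ corepresents the push-forward of the pushout $P = \phi^*(B'') \sqcup_B \psi^*(B')$ along $\beta$. The principal technical point, and the main obstacle, is that this identification is not literally $\beta_!(P) = D$ on the nose: since $\beta_!$ preserves colimits, $\beta_!(P) = \alpha_{1,!}\phi_!\phi^*(B'') \sqcup_{\beta_!(B)} \gamma_{1,!}\psi_!\psi^*(B')$, and the counits $\phi_!\phi^* \Rightarrow \mathrm{id}$ and $\psi_!\psi^* \Rightarrow \mathrm{id}$ mediate a canonical comparison $\beta_!(P) \to D$ which is invertible precisely when those counits are iso on $B''$ and $B'$ (e.g.\ when $\phi^*$ and $\psi^*$ preserve initial objects, as in the application to the enveloping construction in the proof of Theorem \ref{theoremquillenequivfibers}). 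Keeping track of these mate comparisons and counits correctly when unpacking the first formulation is the bookkeeping task that must be carried out carefully.
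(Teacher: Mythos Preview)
Your verification of the second formulation via the universal property is correct and is exactly the approach the paper indicates (the paper's proof is a one-line pointer to the universal property and the adjunctions $(\alpha_!,\alpha^*)$, etc.). Your argument fills in those details appropriately.

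Your observation about the first formulation is also correct, and in fact sharper than the paper: taken literally as $D \cong \beta_!(P)$ with $P = \phi^*(B'') \sqcup_B \psi^*(B')$, the statement does not hold without further hypotheses. As you compute, $\beta_!(P) \cong \alpha_!\phi_!\phi^*(B'') \sqcup_{\beta_!(B)} \gamma_!\psi_!\psi^*(B')$, and the comparison to $D = \alpha_!(B'') \sqcup_{\beta_!(B)} \gamma_!(B')$ is the map induced by the counits, which need not be invertible. The paper's phrasing is therefore imprecise. In the paper's only use of this lemma (the proof of Theorem~\ref{theoremquillenequivfibers}), one leg of the span has $\phi = \mathrm{id}$ and the other has $B' = i_{O'}$ initial, so the counits involved are isomorphisms and the first formulation holds on the nose; your remark to this effect is exactly right. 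You could simply note that the second formulation is the one with an unconditional proof, and that the first holds under the additional counit condition you state.
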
 

\begin{proof}
The proof boils down to the universal property of the pushout, and uses the functors displayed in the commutative diagram above (and the adjunctions they induce, like $(\alpha_!,\alpha^*)$) to shift between $\Phi(A)$ and $\Phi(C)$.
\end{proof}

\begin{lemma} \label{lemma:cube-lemma}
Suppose $\calD$ is a semi-model category (resp. semi-model category over $\M$) and all objects in the following morphism $h$ of pushout squares are cofibrant (resp. cofibrant in $\M$).
\[
\xymatrix{A_1 \ar[rr]^{f_1} \ar[dd] \ar[dr]^{h_1} && B_1 \ar[dd] \ar[dr]^{h_2} & \\
& A_2 \ar[rr]^(0.3){f_2} \ar[dd] && B_2 \ar[dd] \\
C_1 \ar[rr] \ar[dr]^{h_3} && P_1 \ar[dr] & \\
& C_2 \ar[rr] && P_2}
\]
If each morphism $h_i$ is a weak equivalence and either $f_1$ or $f_2$ is a cofibration, then the induced morphism of pushouts $P_1\to P_2$ is a weak equivalence.
\end{lemma}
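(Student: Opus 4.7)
My plan is to prove this by adapting the proof of Hovey's Cube Lemma \cite[Lemma 5.2.6]{hovey-book} to the semi-model category setting, using crucially that every object of the cube is cofibrant. The tools available under this cofibrancy hypothesis are: factorization of morphisms with cofibrant domain, lifting of cofibrations against trivial fibrations, the two-out-of-three property, and the fact that the pushout of a weak equivalence between cofibrant objects along a cofibration (between cofibrant objects) is again a weak equivalence. The parenthetical ``semi-model category over $\M$'' case runs identically, with ``cofibrant in $\M$'' replacing ``cofibrant'' throughout.

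First I would reduce to the case where both $f_1$ and $f_2$ are cofibrations. Assuming WLOG $f_1$ is a cofibration, I would factor $f_2$ as a cofibration $\tilde f_2 \colon A_2 \to \tilde B_2$ followed by a trivial fibration $p \colon \tilde B_2 \to B_2$ (available since $A_2$ is cofibrant), and lift $h_2$ through $p$ using $f_1$ to get $\tilde h_2 \colon B_1 \to \tilde B_2$, a weak equivalence by two-out-of-three. To handle the fact that the side arrows $A_i \to C_i$ need not be cofibrations, I would also perform a functorial Reedy-type replacement, factoring each as a cofibration $\tilde g_i \colon A_i \to \tilde C_i$ followed by a trivial fibration $q_i \colon \tilde C_i \to C_i$, producing a replacement cube in which all four legs of both spans are cofibrations between cofibrant objects. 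For this replacement cube I would run the classical interpolation via the intermediate $Q = A_2 \cup_{A_1} B_1$, which is the pushout of the cofibration $f_1$ along $h_1$, so that $A_2 \to Q$ is a cofibration and $B_1 \to Q$ is a weak equivalence; two-out-of-three then shows that $Q \to \tilde B_2$ is a weak equivalence, and a two-step interpolation through $R = Q \cup_{A_2} \tilde C_2$ exhibits $\tilde P_1 \to \tilde P_2$ as a composite of pushouts of weak equivalences along cofibrations.

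The main obstacle is comparing the original cube with its replacement, namely verifying that each natural map $\tilde P_i \to P_i$ is a weak equivalence, where $\tilde P_i = \tilde B_i \cup_{A_i} \tilde C_i$ (with $\tilde B_1 = B_1$, since $f_1$ was already a cofibration). Each such map decomposes into pushouts of the trivial fibrations $p$ and $q_i$ along the canonical morphisms $\tilde B_i \to \tilde P_i$ and $\tilde C_i \to \tilde P_i$; the key point is that, because both $f_i$ (after the reduction) and both $\tilde g_i$ are cofibrations in the replacement cube, these canonical morphisms are themselves cofibrations (as pushouts of cofibrations between cofibrant objects), so the corresponding pushouts of weak equivalences are weak equivalences. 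Chaining together all the resulting weak equivalences via two-out-of-three yields the conclusion that $h_4 \colon P_1 \to P_2$ is a weak equivalence.
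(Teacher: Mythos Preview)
Your approach is correct but takes a genuinely different route from the paper. The paper invokes the Reedy semi-model structure on the span-shaped diagram category $\calD^{\cat B}$, observes that the colimit functor is left Quillen (because the constant-diagram right adjoint preserves fibrations and trivial fibrations), and then applies Ken Brown's lemma to conclude that $\colim$ preserves weak equivalences between Reedy-cofibrant spans. You instead perform explicit factorizations and a two-step interpolation through $Q = A_2 \cup_{A_1} B_1$ and $R = Q \cup_{A_2} \tilde C_2$, reducing everything to repeated applications of the special case ``the pushout of a weak equivalence between cofibrant objects along a cofibration is a weak equivalence.'' That special case is itself an instance of the cube lemma (take $h_1 = h_3 = \mathrm{id}$ and $f_1 = \mathrm{id}_{A}$), and its standard justification in the (semi-)model literature already goes through the Reedy/Ken Brown machinery the paper uses; so while your reduction is valid, it defers the essential content to a cited tool whose proof is essentially the paper's argument. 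What your approach does buy is explicitness about cofibrant replacement of the span legs: under the stated hypotheses only one $f_i$ is a cofibration and nothing is assumed about $A_i \to C_i$, so neither span is Reedy cofibrant as written, and the paper's terse sketch glosses over the replacement step that you carry out by hand. One small inaccuracy: you describe your argument as ``adapting the proof of Hovey's Cube Lemma,'' but Hovey's own proof of \cite[Lemma 5.2.6]{hovey-book} \emph{is} the Reedy/Ken Brown argument, so your approach is a departure from Hovey rather than an adaptation of it.
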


\begin{proof}
This result is claimed in \cite{spitzweck-thesis} (page 10) for Spitzweck's version of semi-model categories, but because our version is more general, and because we also state the result for semi-model categories \textit{over} $\M$, we will sketch the proof. The proof proceeds exactly as in \cite[Lemma 5.2.6]{hovey-book}. One defines a Reedy category $\cat B$ of spans $c\gets a\to b$, and the Reedy semi-model structure (resp. semi-model structure over $\M$) on $\cat D^{\cat B}$ from \cite[Theorem 3.12]{barwickSemi} or \cite[Proposition 3]{spitzweck-thesis}. Then we use the definition of cofibrations (via latching objects) in the Reedy semi-model structure (resp. semi-model structure over $\M$) to conclude that $C_i\gets A_i \stackrel{f_i}{\to}B_i$ are both cofibrant diagrams above. Thus, it suffices to prove that the colimit functor $\colim: \cat D^{\cat B} \to \cat D$ is left Quillen, and to apply Ken Brown's lemma \cite[Proposition 12.1.6]{fresse-book}, which, for semi-model categories over $\M$ says that left Quillen functors preserve weak equivalences between objects that are cofibrant in $\M$. This follows by \cite[12.1.8]{fresse-book} since the right adjoint of $\colim$ (namely: the constant diagram functor) preserves fibrations and trivial fibrations, by unpacking the definition of a matching object.
\end{proof}

We conclude this section with an application to strictification results in category theory. We fix $\cat K$ to be the canonical model structure on the category of small categories \cite{lack} (1.5). In fact, any canonical model structure on a locally finitely presentable 2-category would work, e.g., a category of presheaves $[\cat C^{op},Cat]$. Lack produced a model structure on the category $Mnd_f(\cat K)$ of finitary (that is, filtered-colimit preserving) 2-monads on $\cat K$. This is, in fact, transferred from a model structure on $End_f(\cat K)$ of finitary endo-2-functors of $\cat K$, as follows. The forgetful functor $U: Mnd_f(\cat K) \to End_f(\cat K)$ has a left adjoint, given by the free monoid functor, and the Lack model structure on $Mnd_f(\cat K)$ is defined so that weak equivalences and fibrations are created by $U$ \cite{lack} (2.4).

For any $T\in Mnd_f(\cat K)$, we have $\Phi(T):=\Alg_T(\cat K)$, defining a functor $\Phi$. Lack proved that every $\Phi(T)$ has a model structure transferred from $\cat K$ \cite{lack} (5.1). One $T$ in particular describes the Grothendieck construction $\int \Phi$, and hence we have a global model structure transferred from the product model structure $\M = End_f \times \cat K$. 

The cofibrant objects of $\int \Phi$ (and of the vertical structures $\Alg_T(\cat K)$ and the horizontal model structure $Mnd_f(\cat K)$) are the `flexible' ones \cite{lack} (5.6, 5.12, 7.5). Fortunately, in $\M$, all objects are cofibrant, hence all objects in $\int \Phi$ are relatively cofibrant. Hence, we may use Theorem \ref{theoremquillenequivfibers}.
We get the following result, which appears to be new, and complements \cite[Proposition 8.4]{lack}.

\begin{corollary} \label{cor:Lack-Quillen-equivalence}
Let $f: T\to T'$ be a weak equivalence in $Mnd_f(\cat K)$. Let $T$ and $T'$ be flexible monads \cite{lack} (7.5). Then $f$ induces a Quillen equivalence between $T$-algebras and $T'$-algebras with the Lack model structures \cite{lack} (5.5).
\end{corollary}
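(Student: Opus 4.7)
The plan is to apply Theorem \ref{theoremquillenequivfibers}(3) directly within the framework set up in the paragraph preceding the corollary: $\cat{B} = Mnd_f(\cat K)$, $\Phi(T) = \Alg_T(\cat K)$, with cofibrantly generated global semi-model structure on $\int \Phi$ transferred from $\M = End_f(\cat K)\times \cat K$. By \cite[5.1]{lack}, each $\Alg_T(\cat K)$ carries a full model structure transferred from $\cat K$, which must coincide with the vertical structure produced by Theorem \ref{thm:global-to-local}, since both arise by transfer from $\cat K$ along the same adjunction.

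Part (3) of Theorem \ref{theoremquillenequivfibers} requires the global structure to be cofibrantly generated (which it is, by construction), $T$ and $T'$ to be cofibrant in $\int \Phi$, and $f^*$ to reflect weak equivalences. For the cofibrancy, the hypothesis that $T$ and $T'$ are flexible means, by \cite[5.6, 5.12, 7.5]{lack}, that they are cofibrant in $Mnd_f(\cat K)$. Combined with the remark following Proposition \ref{prop:stanculescu} -- namely, that $(O,A)$ is cofibrant in $\int \Phi$ iff $O$ is cofibrant in $\cat B$ and $A$ is cofibrant in $\Phi(O)$ -- the pairs $(T, i_T)$ and $(T', i_{T'})$ are cofibrant in $\int \Phi$, since initial objects are always cofibrant. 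For the reflection of weak equivalences, observe that Lack's transferred structure declares a morphism of $T'$-algebras to be a weak equivalence precisely when its underlying functor in $\cat K$ is an equivalence of categories; since the restriction functor $f^*$ leaves the underlying functor in $\cat K$ unchanged, $f^*$ automatically both preserves and reflects weak equivalences.

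With these verifications in hand, Theorem \ref{theoremquillenequivfibers}(3) directly yields that $(f^*, f_!)\colon \Alg_T(\cat K) \rightleftarrows \Alg_{T'}(\cat K)$ is a Quillen equivalence, since $f$ is by hypothesis a horizontal weak equivalence. The main obstacle, such as it is, is bookkeeping: one must identify the paper's notion of cofibrancy in $\int \Phi$ with Lack's notion of flexibility, and confirm that the vertical structure obtained via Theorem \ref{thm:global-to-local} genuinely agrees with Lack's transferred structure on $\Alg_T(\cat K)$. Both identifications follow from uniqueness of transferred structures along the relevant adjunctions, so the application of the theorem is essentially immediate once the ambient framework is set up. Note that one could alternatively appeal to part (2) of Theorem \ref{theoremquillenequivfibers} using the fact that every object of $\M$ is cofibrant (hence every object of $\int \Phi$ is relatively cofibrant), but this would require a separate verification of relative left properness of $\int \Phi$ over $\M$, which is less direct in the $2$-categorical setting where the $h$-monoidality machinery of \cite{batanin-berger} does not literally apply.
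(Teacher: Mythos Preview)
Your proposal is correct and takes essentially the same approach as the paper: both invoke Theorem~\ref{theoremquillenequivfibers}(3) directly, using that flexible monads are cofibrant and that $f^*$ reflects weak equivalences because the vertical weak equivalences are created in $\cat K$. Your added bookkeeping (identifying flexibility with cofibrancy via \cite{lack} and the remark after Proposition~\ref{prop:stanculescu}, and noting why part~(2) is less convenient here) is accurate and, in fact, makes explicit what the paper's one-line proof leaves implicit.
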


\begin{proof}
This follows immediately from Theorem \ref{theoremquillenequivfibers}(3).
\end{proof}

Several classical strictification theorems are a consequence of this Quillen equivalence. For example, the classical strictification from lax monoidal to strict monoidal categories can be seen to be a special case of Corollary \ref{cor:Lack-Quillen-equivalence} \cite{lack} (5.8). With a bit of work, one could attempt to recover the coherence results of John Power for bicategories using this machinery. We conclude with an observation about left properness.

\begin{example} \label{example:lack-not-left-proper}
Let $\cat K$ be the canonical model structure on the 2-category of 2-categories. Lack's model structure on $Mnd_f(\cat K)$ is not relatively left proper. If it were, then one could expand Corollary \ref{cor:Lack-Quillen-equivalence} to all finitary 2-monads, since every object is underlying cofibrant. However, strictification fails for the following pair of monads. Let $T$ be the free monoid monad on $\cat K$, with respect to the Cartesian monoidal structure. Let $T'$ be the free monoid monad with respect to the Gray monoidal structure. Then, for every $X\in \cat K$, we have $T(X)\simeq T'(X)$, and consequently $T$ and $T'$ are weakly equivalent. 
However, strict 2 monoids are \textit{not} equivalent to Gray monoids, because restriction is not essentially surjective in the homotopy category. For example, a braided monoidal structure does not come from a strict structure.
\end{example}

Of course, it is possible that for \textit{some} base model categories $\cat K$, that $Mnd_f(\cat K)$ is relatively left proper. This leads to an interesting open problem.

\begin{problem}
Characterize the 2-categories $\cat K$ such that $Mnd_f(\cat K)$ is relatively left proper. 
\end{problem}

\subsection{Left cofinal Quillen functors}

\begin{definition}\label{definitionfunctorpreseringcofibrantreplacementsofterminalobject}
	Let $B$ and $C$ two model categories and $G: B \to C$ a left Quillen functor. $G$ is a left cofinal Quillen functor if $\mathbb{L}G(1)$ is contractible, where $\mathbb{L}G$ is the left derived functor of $G$ and $1$ is the terminal object in $B$.
\end{definition}

\begin{remark}
	Recall from \cite[Definition 5.6]{batanin-deleger} that a morphism of polynomial monads $f: S \to T$ is homotopically cofinal if the nerve of the classifier $T^S$ is contractible. This implies that $f_! : \Alg_S(\sSet) \to Alg_T(\sSet)$ is a left cofinal Quillen functor. This notion is crucial to get multiple delooping results \cite{deleger}. It might happen that a Grothendieck construction is equivalent to the category of algebras of a polynomial monad but not its fibers. Assume that, using polynomial monads, we get a cofinality result on the global model structure. We would like to deduce from it a cofinality result between the fibers. This is the point of this section and Theorem \ref{theoremleftcofinalityfromglobaltolocal}.
\end{remark}

\begin{lemma}\label{propositionquillenequivalencecofinal}
	The left adjoint of a Quillen equivalence is a left cofinal Quillen functor.
\end{lemma}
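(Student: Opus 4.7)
The plan is to unwind the definitions and reduce to the defining property of a Quillen equivalence. Write $G \colon B \leftrightarrows C \colon G'$ for the Quillen equivalence, with $G$ the left adjoint. We wish to show that $\mathbb{L}G(1_B)$ is contractible, i.e.\ weakly equivalent to the terminal object $1_C$ of $C$.

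First I would pick a cofibrant replacement $q \colon Q1_B \stackrel{\sim}{\to} 1_B$, so that $\mathbb{L}G(1_B)$ is represented by $G(Q1_B)$. Next, consider the unique morphism $\alpha \colon G(Q1_B) \to 1_C$. Its mate under the adjunction is a morphism $\beta \colon Q1_B \to G'(1_C)$. Since $G'$ is a right adjoint, it preserves limits, and in particular it preserves the terminal object: $G'(1_C) \cong 1_B$. Under this identification, $\beta$ becomes the unique morphism $Q1_B \to 1_B$, which is precisely $q$, hence a weak equivalence.

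Now apply the defining criterion of a Quillen equivalence: since $Q1_B$ is cofibrant and $1_C$ is fibrant (terminal objects are always fibrant), the morphism $\alpha$ is a weak equivalence in $C$ if and only if its adjoint $\beta$ is a weak equivalence in $B$. As we have just observed $\beta = q$ is a weak equivalence, so $\alpha \colon G(Q1_B) \to 1_C$ is a weak equivalence, and $\mathbb{L}G(1_B) \simeq 1_C$ is contractible as required.

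There is essentially no obstacle here; the only subtle point worth flagging is the observation that the right adjoint $G'$ of a Quillen equivalence necessarily sends $1_C$ to $1_B$, which legitimises the use of the Quillen equivalence criterion with a cofibrant source and a fibrant target. If the reader prefers, one may avoid invoking the fibrancy of $1_C$ explicitly by noting that the derived unit $Q1_B \to G'G(Q1_B) \to G'R(G(Q1_B))$ (with $R$ a fibrant replacement in $C$) is a weak equivalence by the Quillen equivalence, and then composing with the terminal map in $C$ still yields the cofibrant replacement $q$ after applying $G'$, which forces the terminal map out of $G(Q1_B)$ to be a weak equivalence by the two-out-of-three property.
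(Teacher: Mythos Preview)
Your proof is correct and follows essentially the same approach as the paper's: take a cofibrant replacement $b$ of the terminal object, observe that its map to $G'(1_C)\cong 1_B$ is a weak equivalence, and invoke the Quillen equivalence criterion (with $b$ cofibrant and $1_C$ fibrant) to conclude that $G(b)\to 1_C$ is a weak equivalence. You spell out a few steps more explicitly than the paper does (the identification $G'(1_C)\cong 1_B$ and the fibrancy of $1_C$), but the argument is the same.
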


\begin{proof}
	Let $G: B \to C$ be the left adjoint of a Quillen equivalence and $V$ be its right adjoint. Let $b \in B$ be a cofibrant replacement of the terminal object. Since $b \to V(1)$ is a weak equivalence, $G(b) \to 1$ is a weak equivalence. Moreover, $G(b)$ is cofibrant.
\end{proof}

Let $B$ and $C$ be two model categories and $\Phi: B^{op} \to \Cat$ and $\Psi: C^{op} \to \Cat$ two functors. We want to find sufficient conditions for a functor between Grothendieck constructions
\[
F: \int \Phi \to \int \Psi \ ,
\]
to induce a functor between fibers
\[
\mathcal{F} : \Phi(b) \to \Psi(c)
\]
which is a left cofinal Quillen functor.

\begin{definition}
	Let $\mathcal{K}$ be the category whose
	\begin{itemize}
		\item objects are pairs $(B,\Phi)$ where $B$ is a category and $\Phi: B^{op} \to \mathrm{CAT}$ is a functor
		
		\item morphisms $(B,\Phi) \to (C,\Psi)$ are given by pairs $(G,H)$ where $G: B \to C$ is a functor and $H: \Phi \Rightarrow \Psi G$ is a natural transformation
		\[
		\xymatrix{
			B^{op} \ar[rr]^-{\Phi} \ar[rd]_-{G} \xtwocell[rr]{}<>{<3.2>\ H} && \mathrm{CAT} \\
			& C^{op} \ar[ru]_-{\Psi}
		}
		\]
		
		\item composites are given by
		\[
		(G_1,H_1) (G_2,H_2) = (G_1 G_2 , (H_1 \rhd G_2) H_2)
		\]
		where $\rhd$ is the right whiskering \cite{BBFW}.
	\end{itemize}
\end{definition}

\begin{definition}\label{definitiondecomposable}
	A functor between Grothendieck constructions $F : \int \Phi \to \int \Psi$ is \emph{decomposable} if it is the image of a morphism $(G,H)$ in $\mathcal{K}$ through the functor $\mathcal{K} \to \mathrm{CAT}$ which sends a pair $(B,\Phi)$ to the category $\int \Phi$. We  will write $F = G \rtimes H$.
\end{definition}

\begin{theorem}\label{theoremleftcofinalityfromglobaltolocal}
	Assume we have a functor between Grothendieck constructions
	\[
	F : \int \Phi \to \int \Psi
	\]
	
	Assume that $F$ is decomposable with $F = G \rtimes H$. Let $b$ and $c$ be cofibrant replacements of the terminal object in $B$ and $C$ respectively. Let $\gamma: G(b) \to c$ be a weak equivalence such that $\gamma^*$ reflects weak equivalences and the unique morphism $0 \to \gamma^*(0)$ is a weak equivalence.
	
	If $F$ is a left cofinal Quillen functor, then the composite functor
	\[
	\Phi(b) \xrightarrow{H_b} \Psi G(b) \xrightarrow{\gamma_!} \Psi(c)
	\]
	is also a left cofinal Quillen functor.
\end{theorem}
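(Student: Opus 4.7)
The plan is to prove $\mathbb{L}(\gamma_! H_b)(1_{\Phi(b)})$ is contractible in $\Psi(c)$ by transferring the contractibility of $\mathbb{L}F(1_{\int\Phi})$ from $\int \Psi$ down to the fiber $\Psi(c)$, using the Quillen equivalence induced by $\gamma$. First I would set up cofibrant replacements: choose $X \to t_{\Phi(b)}$ to be a cofibrant replacement in $\Phi(b)$, so that by the cofibrancy characterization after Proposition \ref{prop:stanculescu}, $(b,X) \to (t_B, t_{\Phi(t_B)})$ is a cofibrant replacement in $\int \Phi$. Then $F$ being a left cofinal Quillen functor translates into the unique morphism $F(b,X) = (G(b), H_b(X)) \to (t_C, t_{\Psi(t_C)})$ being a global weak equivalence (an iso in $\mathrm{Ho}(\int\Psi)$ into the terminal object is represented by a weak equivalence). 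Its vertical component, using that the restriction functor along $G(b) \to t_C$ preserves terminal objects as a right adjoint, is a vertical weak equivalence $H_b(X) \to t_{\Psi(G(b))}$ in $\Psi(G(b))$.

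Next I would apply Theorem \ref{theoremquillenequivfibers} to $\gamma$: the assumptions on $\gamma$ (horizontal weak equivalence, $\gamma^*$ reflects weak equivalences, and $0 \to \gamma^*(0)$ is a weak equivalence) are exactly those required to conclude that $(\gamma_!, \gamma^*) : \Psi(G(b)) \rightleftarrows \Psi(c)$ is a Quillen equivalence. Take a cofibrant replacement $\tilde t \to t_{\Psi(G(b))}$; lifting $H_b(X) \to t_{\Psi(G(b))}$ against the trivial fibration $\tilde t \to t_{\Psi(G(b))}$ produces a weak equivalence $H_b(X) \to \tilde t$ between cofibrant objects. Since $\gamma_!$ is left Quillen by Theorem \ref{thm:global-to-local}(2), Ken Brown's lemma yields a vertical weak equivalence $\gamma_! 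H_b(X) \to \gamma_!(\tilde t)$ in $\Psi(c)$. Because $\gamma^*$ is a right adjoint and $t_{\Psi(c)}$ is fibrant, $\mathbb{R}\gamma^*(t_{\Psi(c)}) = \gamma^*(t_{\Psi(c)}) = t_{\Psi(G(b))}$, and therefore $\gamma_!(\tilde t)$ represents $\mathbb{L}\gamma_! \mathbb{R}\gamma^*(t_{\Psi(c)})$ in $\mathrm{Ho}(\Psi(c))$. The derived counit of the Quillen equivalence is then an isomorphism in $\mathrm{Ho}(\Psi(c))$ between $\gamma_!(\tilde t)$ and $t_{\Psi(c)}$, and by saturation of weak equivalences this is represented by a vertical weak equivalence $\gamma_!(\tilde t) \to t_{\Psi(c)}$. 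Chaining these weak equivalences shows $\gamma_! H_b(X) \to t_{\Psi(c)}$ is a weak equivalence, i.e., $\mathbb{L}(\gamma_! H_b)(1_{\Phi(b)})$ is contractible.

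Finally I would verify that the composite $\gamma_! H_b$ is itself left Quillen. The factor $\gamma_!$ is left Quillen by Theorem \ref{thm:global-to-local}(2). For $H_b$, the decomposable structure $F = G \rtimes H$ exhibits $H_b$ as the restriction of $F$ to the fiber over $b$, providing it with a right adjoint; and since $F$ is left Quillen, it sends any vertical (trivial) cofibration $(id_b, f)$ (which is a global (trivial) cofibration by Proposition \ref{prop:globaltovertical}(ii)) to the global (trivial) cofibration $(id_{G(b)}, H_b(f))$, and then Lemma \ref{lemma:cof-of-pairs} gives that $H_b(f)$ is a vertical (trivial) cofibration, so $H_b$ is left Quillen. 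The main obstacle is the correct identification of $\gamma_!(\tilde t)$ with $\mathbb{L}\gamma_! \mathbb{R}\gamma^*(t_{\Psi(c)})$ so that the Quillen equivalence for the specific $\gamma$ given in the hypothesis can be applied, rather than for some other horizontal weak equivalence between $G(b)$ and $c$ extracted abstractly from the zigzag witnessing the contractibility of $\mathbb{L}F(1_{\int\Phi})$.
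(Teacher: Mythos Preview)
Your argument is correct and follows essentially the same line as the paper, but the paper organizes it more modularly. Rather than chaining explicit weak equivalences $\gamma_! H_b(X) \to \gamma_!(\tilde t) \to t_{\Psi(c)}$, the paper simply shows that each factor $H_b$ and $\gamma_!$ is separately a left cofinal Quillen functor and then invokes that composites of left cofinal Quillen functors are left cofinal. Your first paragraph is exactly the paper's argument that $H_b$ is left cofinal (i.e., $H_b(X)$ is a cofibrant replacement of the terminal object in $\Psi G(b)$); your second paragraph, going through $\tilde t$ and the derived counit, is an inline reproof of Lemma~\ref{propositionquillenequivalencecofinal} applied to $\gamma_!$, whereas the paper just cites that lemma once it has the Quillen equivalence from Theorem~\ref{theoremquillenequivfibers}. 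One thing you add that the paper glosses over is the explicit check that $H_b$ (and hence the composite) is left Quillen; the paper implicitly treats this as part of the standing hypotheses on $F$.
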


\begin{proof}
We will prove that both $H_b$ and $\gamma_!$ are left cofinal Quillen functors.
	
If $x$ is a cofibrant replacement of the terminal object in $\Phi(b)$, then, since $b$ is a cofibrant replacement of the terminal object in $B$, $(b,x)$ is a cofibrant replacement replacement of the terminal object in $\int \Phi$. Since $F$ is a left cofinal Quillen functor, $F(b,x)$ is a cofibrant replacement of the terminal object. Therefore, $H_b(x)$ is a cofibrant replacement of the terminal object in $\Psi G (b)$.

Since, by Theorem \ref{theoremquillenequivfibers}, the adjunction $\gamma_! \dashv \gamma^*$ is a Quillen equivalence, $\gamma_!$ is a left cofinal Quillen functor thanks to Lemma \ref{propositionquillenequivalencecofinal}. 
	
It is obvious that the composite of two left cofinal Quillen functors is a left cofinal Quillen functor. This concludes the proof.
\end{proof}

\subsection{Consequences} \label{sec:consequences}

Thanks to Theorem \ref{thm:global-model-examples} we have an immediate corollary of Theorems \ref{thm:global-to-local} and \ref{thm:properness}.

\begin{corollary}
Assume $\M$ is a compactly generated monoidal model category satisfying the monoid axiom. 

\begin{enumerate}
\item Then the following horizontal model structures exist:
\begin{enumerate}
\item The category of $R$-algebras for a commutative monoid $R$ in $\M$.
\item The category of non-symmetric operads in $\M$. 
\item The category of pairs $(O,P)$ where $O$ and $P$ are non-symmetric operads.
\item The category of constant-free symmetric operads.
\item The category of constant-free $n$-operads.
\item The category of commutative monoids, if $\M$ satisfies the commutative monoid axiom.
\end{enumerate}

\item Furthermore, the following vertical model structures exist, transferred in the obvious way.

\begin{enumerate}
\item The categories of $A$-modules (left, right, or bimodules) over an $R$-algebra $A$, where $R$ is a commutative monoid in $\M$.
\item The category of $O$-algebras or left $O$-modules, where $O$ is a non-symmetric operad.
\item The category of $O-P$-bimodules, where $O$ and $P$ are non-symmetric operads.
\item The category of constant-free left $O$-modules, where $O$ is a constant-free symmetric operad.
\item The category of constant-free left $O$-modules, where $O$ is a constant-free $n$-operad.
\end{enumerate}

\end{enumerate}

If in addition $\M$ is $h$-monoidal then these model structures are relatively left proper. If in addition $\M$ is strongly $h$-monoidal then they are left proper. Furthermore, Theorem \ref{theoremquillenequivfibers} provides a rectification result, whereby weak equivalences in the horizontal model structures yield Quillen equivalences between vertical model structures, such as categories of algebras over weakly equivalent operads.
\end{corollary}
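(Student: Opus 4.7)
The plan is to obtain everything by chaining together three results already established: Theorem \ref{thm:global-model-examples} supplies the global (semi-)model structures on $\int \Phi$ for each of the seven flavors (a)--(g); Theorem \ref{thm:global-to-local} transports these structures downward to produce horizontal model structures on the base category $\cat{B}$ and vertical model structures on the fibers $\Phi(O)$; Theorem \ref{thm:properness} propagates properness in the same direction; and Theorem \ref{theoremquillenequivfibers} yields the rectification statement. The proof is therefore a case-by-case verification that the hypotheses of these three theorems hold in each of the listed settings.

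First, for the existence statements in (1) and (2), I would run through each flavor and confirm the hypotheses of Theorem \ref{thm:global-to-local}. In every case, $\int \Phi$, $\cat{B}$, and each $\Phi(O)$ are bicomplete, being categories of algebras over accessible monads on presentable categories built from $\M$. The restriction functors $\phi^*$ preserve weak equivalences and fibrations because the global weak equivalences and fibrations in these settings are defined on underlying collections in $\M$, and $\phi^*$ acts as the identity on underlying objects, only modifying the action. Hence Theorem \ref{thm:global-to-local}(1) yields the claimed horizontal model structure on $\cat{B}$ and the vertical model structure on each $\Phi(O)$ (a full model structure in the tame or quasi-tame cases, and a semi-model structure in general, with the extra cofibrancy hypothesis on $O$ handled via Proposition \ref{prop:global-to-vert-over} when $\int \Phi$ is a semi-model structure over a category of collections). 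Cofibrant generation is inherited by Theorem \ref{thm:global-to-local}(3).

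Next, for the properness claims, observe that for each of (a)--(g) the category $\int \Phi$ admits all small limits and colimits; in particular, both global and vertical pushouts and pullbacks exist. Each $\Phi(O)$ has an initial object (the free $O$-algebra on the initial object of $\M$, the zero $R$-module, etc.) and a terminal object. Under $h$-monoidality (respectively strong $h$-monoidality) of $\M$, Theorem \ref{thm:global-model-examples} guarantees that $\int \Phi$ is relatively left proper (respectively left proper). Applying Theorem \ref{thm:properness} and Theorem \ref{thm:rel-left-proper} then yields the corresponding properness for the horizontal structure on $\cat{B}$ and for each vertical structure $\Phi(O)$.

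Finally, for the rectification statement, I would appeal to Theorem \ref{theoremquillenequivfibers}. For any horizontal weak equivalence $\phi \colon O \to O'$, the pair $(\phi_!,\phi^*)$ is already a Quillen adjunction by Theorem \ref{thm:global-to-local}(2). To upgrade this to a Quillen equivalence one uses Theorem \ref{theoremquillenequivfibers}(2) or (3): relative left properness (when $\M$ is $h$-monoidal) together with $u$-cofibrancy of $O$ and $O'$, and the fact that $\phi^*$ reflects weak equivalences (because weak equivalences in $\Phi(O)$ are detected on underlying collections and $\phi^*$ does not change these), give the Quillen equivalence on the (semi-)model categories of algebras/modules. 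The main bookkeeping obstacle will be verifying, uniformly across all seven flavors, that the restriction functors $\phi^*$ do reflect weak equivalences and that $\tau \colon i_O \to \phi^*(i_{O'})$ is a weak equivalence; in each listed example this is a direct consequence of the underlying-object description of weak equivalences and initial objects, but it has to be spelled out separately for monoids/commutative monoids, for non-symmetric operads, and for the constant-free symmetric and $n$-operadic cases encoded by substitudes.
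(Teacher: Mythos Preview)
Your proposal is correct and follows essentially the same approach as the paper: the paper treats this corollary as immediate from Theorem \ref{thm:global-model-examples} combined with Theorems \ref{thm:global-to-local} and \ref{thm:properness}, and you have simply spelled out the verification of hypotheses in each case (including the relative left properness via Theorem \ref{thm:rel-left-proper} and the rectification via Theorem \ref{theoremquillenequivfibers}). One minor note: the statement lists six horizontal cases (a)--(f), not seven, though these correspond to the seven global cases (1)--(7) of Theorem \ref{thm:global-model-examples} with some duplication.
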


Of course, many of these results were known previously, but we wanted to list them all to illustrate the power of our main results to apply to so many settings. Furthermore, the results regarding constant-free operads do not seem to have appeared elsewhere. 

\subsection{From semi to full}

Theorem \ref{thm:global-to-local} shows that if $\int \Phi$ is a semi-model category and ${O}$ is cofibrant in $\int \Phi$, then there is a vertical semi-model structure on $\Phi({O})$. In this section, we show how to get full model structures on the fibers $\Phi(O)$. We will use Lemma \ref{lemma:factorization-suffices}.

We will assume we are in the nice situation of Figure (\ref{figure:adjoints}). In particular, we assume that the inclusion functor $i:\cat{B} \to \int \Phi$ has a left adjoint $E$, with the property that the initial object in $\Phi(E(O,A))$ is $A$. Recall that in the setting of operads, $E(O,A)$ is the enveloping operad $O_A$ discussed in \cite{white-yau}. For the setting of monoids and modules, $E(R,M)$ is the enveloping algebra. 
Note that, by Proposition \ref{prop:i-right-Quillen}, the existence of a left adjoint $E$ implies $i$ is a right Quillen functor.

Note that $\Phi(O)$ cannot always have a full model structure. For example, if $O$ is the operad for non-reduced symmetric operads, then $O$-alg has a semi-model structure, but not a full model structure, when the base model category is $Ch(\bbF_p)$ \cite[Example 2.9]{batanin-white-eilenberg-moore}. Thus, our result must have additional conditions.

\begin{lemma} \label{lemma:OA-factorization}
Assume that $\int \Phi$ is homotopically structured and admits a cofibrantly generated global semi-model structure. Assume that, for every morphism $\phi$ in $\cat{B}$, $\phi^*$ preserves fibrations and weak equivalences.
Then, for any morphism $f:A\to B$ in $\Phi(O)$, if the induced morphism $E(O,f):E(O,A)\to E(O,B)$ admits a (trivial cofibration, fibration) factorization in $\cat{B}$, then so does $f$ in $\Phi(O)$.
\end{lemma}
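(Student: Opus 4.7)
The plan is to invoke Proposition \ref{prop:globaltovertical}(iv), which reduces the problem to producing a global (trivial cofibration, fibration) factorization of the vertical morphism $(\mathrm{id}, f) : (O, A) \to (O, B)$ in $\int \Phi$. To produce such a factorization, I would exploit the chain of adjoints $E \dashv i \dashv p$ from Figure (\ref{figure:adjoints}). Since $p$ preserves (trivial) fibrations by the very definition of the global structure, $i$ is left Quillen (as left adjoint to $p$), in addition to being right Quillen (Proposition \ref{prop:i-right-Quillen}). Hence applying $i$ to the given factorization $E(O, f) = p \circ j$ yields a global (trivial cofibration, fibration) factorization
\[
(E(O,A), A) \xrightarrow{i(j)} (C, i_C) \xrightarrow{i(p)} (E(O,B), B)
\]
of $i(E(O, f))$ in $\int \Phi$.

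To transfer this factorization across the units of $E \dashv i$, take the pullback $P := (C, i_C) \times_{(E(O, B), B)} (O, B)$ of $i(p)$ along the unit $u_B : (O, B) \to (E(O, B), B)$. The projection $\pi : P \to (O, B)$ is a global fibration, since global fibrations are stable under pullback. Naturality of the unit at $(\mathrm{id}, f)$, together with the identity $i(p) \circ i(j) \circ u_A = u_B \circ (\mathrm{id}, f)$ (where $u_A$ is the unit at $(O, A)$), supplies compatible morphisms from $(O, A)$ into the defining cone of $P$, and hence a canonical $\sigma : (O, A) \to P$ with $\pi \circ \sigma = (\mathrm{id}, f)$. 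The key step is to show that $\sigma$ is a global trivial cofibration. By Proposition \ref{prop:stanculescu}, this amounts to verifying that the first component of $\sigma$ (a canonical section $s : O \to C \times_{E(O,B)} O$ determined by $\alpha := j \circ \eta_A : O \to C$ and $\mathrm{id}_O$, where $\eta_A$ denotes the first component of $u_A$) is a horizontal trivial cofibration, and that the mate of the second component is a vertical trivial cofibration, the essential input being the trivial cofibration property of $i(j)$.

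Once $\sigma$ is established as a global trivial cofibration, Proposition \ref{prop:globaltovertical}(iv) applied to $(\mathrm{id}, f) = \pi \circ \sigma$ yields the desired vertical (trivial cofibration, fibration) factorization of $f$ in $\Phi(O)$. Unwinding the pullback construction shows that this factorization has the explicit form $f = p' \circ j'$, where $A' := \alpha^*(i_C)$, the map $p' : A' \to B$ arises from $\alpha^*$ applied to the unique morphism $i_C \to p^*(B)$ in $\Phi(C)$ (using $\alpha^*(p^*(B)) \cong \eta_B^*(B) \cong B$), and $j' : A \to A'$ arises from $\eta_A^*$ applied to the unique morphism $A \to j^*(i_C)$ in $\Phi(E(O,A))$. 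Equivalently, $(j', p') = (F(j), F(p))$ for the right adjoint $F : O/\cat{B} \to \Phi(O)$, $F(X, \eta) = \eta^*(i_X)$, to the ``enveloping'' functor $E(O, -) : \Phi(O) \to O/\cat{B}$, and $f = F(E(O,f))$ by the triangle identity.

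The main obstacle is the verification that $\sigma$ is a global trivial cofibration. The weak equivalence aspect is manageable: because $i(j)$ is a global trivial cofibration and hence a global weak equivalence, its second component $A \to j^*(i_C)$ is a vertical weak equivalence in $\Phi(E(O,A))$, and $\eta_A^*$ preserves weak equivalences by hypothesis, so $j'$ is a vertical weak equivalence; likewise $p'$ is a vertical fibration since $i(p)$ is a global fibration and $\alpha^*$ preserves fibrations by hypothesis. The cofibration aspect of $\sigma$ is more delicate, and requires translating lifting problems for $\sigma$ against global trivial fibrations into lifting problems solvable by the left lifting property of $i(j)$, exploiting both sides of the adjunction $E \dashv i$ together with the pullback structure of $P$. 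This is where the bulk of the technical work lies.
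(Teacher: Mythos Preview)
Your detour through the pullback and Proposition \ref{prop:globaltovertical}(iv) introduces a genuine obstruction that you do not resolve. For $\sigma$ to be a global trivial cofibration, its first component $s: O \to C \times_{E(O,B)} O$ must be a horizontal trivial cofibration. But $s$ is a section of the projection $\pi_2: C \times_{E(O,B)} O \to O$, which is the base change of the mere fibration $p$ along $\eta_B$, and there is no reason for $\pi_2$ to be a weak equivalence: $p$ is not assumed trivial, and the unit $\eta_B: O \to E(O,B)$ is typically far from a weak equivalence. So $s$ need not even be a horizontal weak equivalence, your factorization $\pi \circ \sigma$ is not of the required (global trivial cofibration, global fibration) form, and Proposition \ref{prop:globaltovertical}(iv) does not apply. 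The ``bulk of the technical work'' you defer is not merely delicate; as stated it does not go through.

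The paper avoids this entirely by never attempting a global factorization of $(\mathrm{id},f)$. It stays with the factorization $i(\beta)\circ i(\alpha): (E(O,A),A)\to (P,i_P)\to (E(O,B),B)$ and reads off the vertical factorization directly via the identification $\Phi(E(O,A))\cong A\downarrow \Phi(O)$. Since $i$ is left Quillen (Proposition \ref{prop:stanculescu}(ii)), $i(\alpha)=(\alpha,g)$ is a global trivial cofibration, so $g:A\to \alpha^*(i_P)$ is a vertical trivial cofibration by Lemma \ref{lemma:cof-of-pairs}; since $i$ also preserves fibrations (Proposition \ref{prop:i-right-Quillen}), $i(\beta)=(\beta,h)$ is a global fibration, so $h$ is a vertical fibration and $\alpha^*(h)$ is a fibration by hypothesis. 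You already extracted this explicit factorization (your $j'$ and $p'$) and verified the fibration and weak-equivalence parts; the cofibration part for $j'$ should be obtained directly from the left Quillen property of $i$, not through the pullback machinery.
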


Below, let $i_P$ denote the initial object in $\Phi(P)$.

\begin{proof}
Denote the hypothesized factorization as $\beta \circ \alpha:E(O,A)\to P\to E(O,B)$. The morphism $\alpha: E(O,A) \to P$ yields a morphism $i(\alpha) = (\alpha,g):(E(O,A),A) \to (P,i_P)$. Since $P \cong E(P,i_P)$, the morphism $\beta:P \to E(O,B)$ yields a morphism $i(\beta)=(\beta,h):(P,i_P) \to (E(O,B),B)$, where $h:i_P \to \beta^*(B)$ is a morphism in $\Phi(P)$.
Our proof relies on the observation that $\Phi(E(O,A)) \cong A \downarrow \Phi(O)$, and similarly for $B$.

Observe that $(\alpha,g)$ factors as $(E(O,A),A) \to (E(O,A),\alpha^*(i_P)) \to (P,i_P)$, where the first morphism is $(id,g)$ and the second is $(\alpha,id)$. We claim that $f$ factors as $A\to \alpha^*(i_P) \to B$, where the first morphism is $g$, the second is $\alpha^*h$, and we abuse notation to allow $\alpha^*(i_P)$ to refer to both an object in $A \downarrow \Phi(O)$ and the corresponding object in $\Phi(O)$. 
We must prove that $g$ is a trivial cofibration, that $\alpha^*h$ has the correct codomain, and that $\alpha^*h$ is a fibration. 

To understand the morphism $\alpha^*h$, view $B$ in $B \downarrow \Phi(O)$ as $id_B$, so that restriction to $\Phi(E(O,A))$ works by composing with the morphism $f:A\to B$. Hence, $\alpha^*(\beta^*(B))$ is isomorphic to $B$, so $\alpha^*h:\alpha^*(i_P)\to \alpha^*(\beta^*(B))\cong B$ has the correct codomain. Since $\beta$ is a horizontal fibration, this implies $i(\beta)=(\beta,h)$ is a global fibration, by Proposition \ref{prop:i-right-Quillen}. Hence, $h$ is a vertical fibration. Next, $\alpha^*(h)$ is a fibration because $\alpha^*$ preserves fibrations.

We turn to the morphism $g$. We know that $i(\alpha) = (\alpha,g)$ is a trivial cofibration, since $i$ is left Quillen by Proposition \ref{prop:stanculescu}(ii). Hence, $g$ is a trivial cofibration, thanks to Lemma \ref{lemma:cof-of-pairs}. This completes our factorization of $f$.
\end{proof}

\begin{theorem} \label{thm:semi-to-full}
Assume the functor $E:\int \Phi \to \cat{B}$ exists and $\int \Phi$ has a cofibrantly generated global semi-model structure such that, for every $\phi$ in $\cat B$, the functors $\phi^*$ preserve weak equivalences and fibrations. If $\cat{B}$ has a full model structure then for any $O \in \cat{B}$, there is a full model structure on $\Phi(O)$.
\end{theorem}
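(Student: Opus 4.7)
The plan is to apply Lemma~\ref{lemma:factorization-suffices} to $\Phi(O)$. For this, I must verify two things: (i) that $\Phi(O)$ carries a cofibrantly generated semi-model structure, and (ii) that every morphism in $\Phi(O)$ admits a factorization into a trivial cofibration followed by a fibration. The latter is the new input beyond what Theorem~\ref{thm:global-to-local} already provides.

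For (ii), I would apply Lemma~\ref{lemma:OA-factorization} directly. Given any $f : A \to B$ in $\Phi(O)$, the induced morphism $E(O,f) : E(O,A) \to E(O,B)$ lives in $\cat{B}$, which by hypothesis is a full model category. Hence $E(O,f)$ factors as a trivial cofibration followed by a fibration, and Lemma~\ref{lemma:OA-factorization} converts this into the desired factorization of $f$ in $\Phi(O)$. Note this works for every morphism in $\Phi(O)$, not only those with cofibrant domain, which is strictly stronger than axiom M5(ii) demands.

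To establish (i), I would mirror the proof of Theorem~\ref{thm:global-to-local}(1). The axioms M1, M2, M3, together with the cofibration halves M4(i) and M5(i), all follow through Proposition~\ref{prop:globaltovertical}(i)--(iii) and never invoke cofibrancy of $O$, so they carry over verbatim. Axiom M5(ii) is a special case of (ii). For M4(ii), I would use the cofibrantly generated structure constructed in the proof of Theorem~\ref{thm:global-to-local}(3), whose sets $I_V^O$ and $J_V^O$ are built from slices $\int \Phi \downarrow (O,t_O)$ for arbitrary $O$: given a trivial cofibration $f$ with cofibrant domain, apply the small object argument to factor $f$ as $p \circ i$ with $i \in J_V^O\text{-cell}$ and $p$ a fibration; two-out-of-three forces $p$ to be a trivial fibration, the cofibration $f$ then lifts against $p$, and the retract argument exhibits $f$ as a retract of $i$, so $f$ inherits the left lifting property against all fibrations from $i$.

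The main obstacle I anticipate is that Theorem~\ref{thm:global-to-local} produces a semi-model structure on $\Phi(O)$ only for cofibrant $O$ when $\int \Phi$ has a mere global semi-model structure; the obstruction there was precisely the missing trivial cofibration--fibration factorization. The new ingredient of the present theorem---a full model structure on $\cat{B}$---is exactly what Lemma~\ref{lemma:OA-factorization} needs to fill that gap for arbitrary $O$. Once (i) and (ii) are established, Lemma~\ref{lemma:factorization-suffices} immediately promotes the semi-model structure on $\Phi(O)$ to a full model structure.
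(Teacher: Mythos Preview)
Your approach is essentially the paper's: invoke Lemma~\ref{lemma:OA-factorization} to obtain the missing (trivial cofibration, fibration)-factorization from the full model structure on $\cat{B}$, then apply Lemma~\ref{lemma:factorization-suffices}. You are in fact more careful than the paper, which simply cites Theorem~\ref{thm:global-to-local} for the semi-model structure on $\Phi(O)$ without addressing that this theorem, as stated, only yields a semi-model structure for \emph{cofibrant} $O$ when $\int\Phi$ is merely a semi-model category.

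One small gap: in your argument for M4(ii), you write ``two-out-of-three forces $p$ to be a trivial fibration.'' This requires knowing that $i\in J_V^O\text{-cell}$ is a weak equivalence, which you have not justified. In fact this step is redundant: the proof of Lemma~\ref{lemma:factorization-suffices} never uses axiom M4(ii) of the ambient semi-model structure. It only uses M2, M3, M4(i), cofibrant generation, and the hypothesized factorization for all morphisms---all of which you have already secured. That proof establishes $J\text{-cell}\subseteq\sW$ and then deduces lifting of \emph{all} trivial cofibrations against fibrations in one stroke. So you can drop your separate M4(ii) verification entirely and appeal directly to (the proof of) Lemma~\ref{lemma:factorization-suffices} once you have M1--M3, M4(i), M5(i), the full factorization from Lemma~\ref{lemma:OA-factorization}, and the generating sets $I_V^O$, $J_V^O$.
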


\begin{proof}
Theorem \ref{thm:global-to-local} provides a semi-model structure on $\Phi(O)$. Lemma \ref{lemma:factorization-suffices} proves that, if any morphism $f:A\to B$ in $\Phi(O)$ has a factorization into a trivial cofibration followed by a fibration, then this semi-model structure is in fact a full model structure. Since every morphism $E(O,A)\to E(O,B)$ has such a factorization (since $\cat{B}$ has a full model structure), Lemma \ref{lemma:OA-factorization} proves that $f$ has a factorization, so $\Phi(O)$ has a full model structure.
\end{proof}

\begin{remark}
This corollary does not contradict our usual counterexamples. For example, if $\M = Ch(\mathbb{F}_2)$ then there is NOT a full model structure on $\cat B$, the category of symmetric operads. There is a full model structure on constant free symmetric operads \cite[Section 9]{batanin-berger} but such operads cannot encode strict commutative monoids, so no contradiction arises.
\end{remark}

\begin{remark}
The conditions of Theorem \ref{thm:semi-to-full} are easy to check. The global semi-model structure is always transferred in our settings, so is automatically cofibrantly generated. The conditions on functors $\phi^*$ are always satisfied for us because weak equivalences and fibrations in vertical structures $\Phi(O)$ (e.g., $\algom$) are created in a common underlying model category $\M$. And many previous authors have found ways to make categories of operads into full model categories. This is much easier than making $\algom$ into a model category for an arbitrary $O$, since the operad whose algebras are symmetric operads (or non-symmetric, cyclic, modular, $n$-operads, etc.) is $\Sigma$-cofibrant. Thus, Theorem \ref{thm:semi-to-full} reduces the difficulty of producing model structures on categories of operad-algebras to the problem of transferring a model structure to the category of operads.
\end{remark}

\begin{remark}
When the conditions of Theorem \ref{thm:semi-to-full} are satisfied, the conclusion of the theorem is that we have full model structures on the base and fibers, but still only a semi-model structure on $\int \Phi$. In this position, one can apply the results mentioned in the introduction \cite{stanculescu, harpaz-prasma-integrated, cagne-mellies} to produce the total model structure on $\int \Phi$, if certain conditions on the adjunctions $\phi_! \dashv \phi^*$ are met. When the total model structure exists, it coincides with the global structure, promoting the latter from a semi-model structure to a full one.
\end{remark}

\begin{remark}
Our careful treatment of the global (semi-)model structure above was motivated by two common situations. First, it may happen that our ambient model category $\M$ is only well-behaved enough for $Gr(T)$ to have a semi-model structure, even when the polynomial monad $T$ is very nicely behaved. For example, if $T$ is the monad for monoids, but $\M$ fails to satisfy the monoid axiom, then $\Alg_T$ and $\int \Phi$ have only semi-model structures. The second common situation is when we want to study symmetric operads and their algebras. We consider this situation in the next section.
\end{remark}

\section{Grothendieck constructions for symmetric operads} \label{sec:non-tame}

In this section, we study the Grothendieck construction $\int \Phi$ in cases where the global structure is provably only a semi-model structure and not a full model structure. Such situations are common when $\M$ is a general model category (not an extremely nice one like chain complexes over a field of characteristic zero, simplicial sets, or others detailed in \cite{white-yau} where all operads are admissible). We first focus on the category of pairs $(O,A)$ where $O$ is a symmetric operad and $A$ is an $O$-algebra, and later focus on modules and bimodules. We distinguish between classical, reduced (meaning $O(0) = \ast$ is the terminal object of $\M$), and constant-free (meaning $O(0)=\emptyset$ is the initial object of $\M$) symmetric operads. We unify these seemingly disparate settings into a single framework, reprove known results in these settings (sometimes with lesser hypotheses), and prove previously unknown results.

In these situations, $\int \Phi$ provably has only a semi-model structure, not a full model structure. However, $\cat{B}$ is a semi-model category \textit{over} the category of symmetric collections (better than being just a semi-model category), and similarly, in this situation the Grothendieck construction is better behaved than one might initially guess. We fix an ambient model category $\M$.

For the case of pairs $(O,A)$ where $O$ is a symmetric operad valued in some monoidal model category $\M$, and $A$ is an $O$-algebra, the global semi-model structure is transferred from the product $\M^\Sigma \times \M$, where $\M^\Sigma = \Pi_{n\geq 0} \M^{\Sigma_n}$ is the category of symmetric collections in $\M$, in which each $\M^{\Sigma}$ is given the projective model structure. For the case of pairs $(O,M)$ where $M$ is a left $O$-module, the global semi-model structure is transferred from $\M^\Sigma \times \M^\Sigma$. With different restrictions on arity zero, we obtain three flavors of symmetric operads: constant-free, reduced, and classic (or non-reduced) \cite{batanin-berger}.

For the case of symmetric $\fC$-colored operads, for a fixed color set $\fC$, valued in a monoidal model category $\M$, the transfer for algebras is from $\M^{\Sigma_\fC} \times \M^{\fC}$, and modules similarly \cite{white-yau}. The color set is irrelevant for our results, so we omit it from the notation. Let $SOp$ (resp. $SOp_0$, resp. $SOp_{CF}$) denote the polynomial monad whose algebras are ($\fC$-colored) symmetric operads (resp. reduced symmetric operads, resp. constant-free symmetric operads). The polynomial monads $SOp_0$ and $SOp_{CF}$ are tame, though $SOp$ is not \cite{batanin-berger}. Consider the corresponding polynomials for the Grothendieck construction (Proposition \ref{prop:poly-for-Gr}) for pairs $(O,A)$ where $A$ is an $O$-algebra. Consider also the polynomials for the Grothendieck construction of pairs $(O,M)$ where $M$ is a left $O$-module.

In \cite[Proposition 4.25]{companion}, we prove that $Gr(SOp_{CF},mod)$ is quasi-tame, and hence there is a global model structure in this case. However, $Gr(SOp_0,alg)$, $Gr(SOp,alg)$, $Gr(SOp_0,mod)$, $Gr(SOp,mod)$ are not quasi-tame, and there is not a transferred model structure in these cases, as we now show.

\begin{example}
Consider the monad $\cat P = Gr(SOp(\fC))$ for the Grothendieck construction of $\fC$-colored symmetric operads and their algebras, for non-empty $\fC$. The global structure on $\int \Phi$ yields a semi-model category that is not a model category. If it were, then taking $\M = Ch(\mathbb{F}_2)$, we would obtain a full model structure on $\int \Phi$, which would imply a full model structure on $\Phi(O)$ for any $O$. In particular, there would be a vertical model structure on commutative differential graded algebras, contradicting \cite[Section 5.1]{white-commutative-monoids}. Similar counterexamples exist for the reduced case and the case of modules (instead of algebras). The monad $Gr(SOp_{CF},alg)$ appears not to be quasi-tame, but we do not have a counterexample in this setting.
\end{example}

Fortunately, we still have global semi-model structures in these cases. Furthermore, the base category of symmetric operads (whether reduced or not) are semi-model \textit{over} appropriate categories of symmetric collections (Definition \ref{defn:semi-model-over}). 
We now summarize our results from Section \ref{sec:global-model} to these settings. We slightly abuse notation and write `the category of symmetric operads' to mean any of the three variants discussed above (and similarly for symmetric collections).

\begin{proposition} \label{prop:symmetric-operad-results}
Fix a cofibrantly generated monoidal model category $\M$. Let $\int \Phi$ denote any of the five examples above that fail to have global model structures.
\begin{enumerate}
\item The category of symmetric operads has a transferred semi-model structure over the category of symmetric collections.
\item For every $\Sigmac$-cofibrant colored operad $O$, the category of $O$-algebras has a transferred semi-model structure. Similarly, the category of left $O$-modules has a  transferred semi-model structure.
\item If $f: O\to O'$ is a weak equivalence of $\Sigmac$-cofibrant operads then $f$ induces a Quillen equivalence between $O$-algebras and $O'$-algebras (resp. modules).
\end{enumerate}
\end{proposition}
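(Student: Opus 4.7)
The plan is to invoke the general framework of Section \ref{sec:global-model}, once the global semi-model structure on $\int \Phi$ is established as a semi-model structure \emph{over} the appropriate product of symmetric collection categories $\int \Psi = \cat N \times \cat N'$. In all five cases, this global semi-model structure over $\int \Psi$ is furnished by the transfer results of \cite[Section 4]{companion} (or by the enveloping operad filtration arguments of \cite[Theorem 6.3.1]{white-yau}): although the polynomial monad $Gr(T)$ fails to be quasi-tame, its canonical filtration of free extensions still produces weak equivalences when the domain of the extension is cofibrant at the level of collections, which is exactly the content of axiom M4(ii)' of Definition \ref{defn:semi-model-over}.

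For parts (1) and (2), I would apply Proposition \ref{prop:global-to-vert-over} to the forgetful morphism $(U,u)$ from $\int \Phi$ to $\int \Psi$. The hypothesis that restriction functors $\phi^*$ preserve vertical weak equivalences and fibrations holds automatically, since both are detected pointwise in the underlying $\M$, and restriction simply reindexes a collection without altering its underlying values. The proposition then yields immediately the transferred semi-model structure on the category $\cat B$ of symmetric operads (of whichever flavor) over the category of symmetric collections, proving (1). For (2), observe that $u$-cofibrancy in the sense of Definition \ref{defn:u-cofibrant} is precisely $\Sigmac$-cofibrancy, since $U(O)$ is the underlying symmetric collection of $O$ and $\cat N$ carries the projective model structure; the proposition then produces the vertical semi-model structure on $\Phi(O)$ over $\Psi(U(O))$, which is the transferred semi-model structure on $O$-algebras (resp. left $O$-modules) from $\M$ (resp. $\M^\Sigma$).

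For (3), I would apply Theorem \ref{theoremquillenequivfibers}(2). Three of the four hypotheses are already in place: the global semi-model structure is cofibrantly generated (automatic from transfer); $O$ and $O'$ are relatively cofibrant in the sense required, being $\Sigmac$-cofibrant by assumption; and $\phi^*$ reflects weak equivalences because it does not alter underlying pointwise values, so weak equivalences are detected in $\M$ regardless of whether we view a module over $O$ or over $O'$. The main obstacle is relative left properness of $\int \Phi$ over $\int \Psi$. My plan is to establish this directly, by analyzing pushouts of free $Gr(T)$-algebra extensions against $\Sigmac$-cofibrations whose domain is $u$-cofibrant, via the enveloping operad filtration. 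The key point is that, although $Gr(T)$ is not quasi-tame, each successive layer of the filtration is a pushout of a cofibration in the underlying collection category, so a pushout of a weak equivalence along a cofibration with relatively cofibrant domain remains a weak equivalence on each layer and hence on the colimit. With relative left properness in hand, Theorem \ref{theoremquillenequivfibers}(2) delivers the Quillen equivalence $(\phi_!,\phi^*)$ between $\Phi(O)$ and $\Phi(O')$, completing the proof.
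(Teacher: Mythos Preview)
Your approach is essentially the same as the paper's. The paper's proof is extremely terse: it cites \cite[Theorem 2.2.2]{Reedy-paper} directly for (1), and then says (2) and (3) follow from (1) ``exactly like'' Proposition \ref{prop:global-to-vert-over} and Theorem \ref{theoremquillenequivfibers}. You take the same route through these two results, differing only in that you deduce (1) from the global semi-model structure over $\int \Psi$ via Proposition \ref{prop:global-to-vert-over}, whereas the paper invokes the external reference for $\cat B$ directly; since the global structure over $\int \Psi$ is itself an instance of that same transfer result applied to $Gr(T)$, the two organizations are equivalent.

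One point worth flagging on (3): you correctly observe that Theorem \ref{theoremquillenequivfibers}(2) requires relative left properness of $\int \Phi$ over $\int \Psi$, and you propose to supply it via the enveloping-operad filtration. The paper is entirely silent on this hypothesis. Your sketch here is thin in one respect: the filtration argument for \emph{relative left properness} is not the same as the admissibility filtration you describe (layers being pushouts of cofibrations); rather one must compare the filtrations for two parallel free extensions along a weak equivalence of relatively cofibrant algebras, as in \cite{hackney-proper} or \cite[Theorem 2.14]{batanin-berger}, and in the stated generality (no $h$-monoidality or left properness of $\M$ assumed) this step deserves more care. This is not a gap relative to the paper --- which simply gestures at Theorem \ref{theoremquillenequivfibers} without checking its hypotheses --- but it is the one place where your argument, like the paper's, is doing real work off-stage.
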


\begin{proof}
(1) is a special case of \cite[Theorem 2.2.2]{Reedy-paper}. 
(2) and (3) follow from (1) exactly like Proposition \ref{prop:global-to-vert-over} and \ref{theoremquillenequivfibers}.
\end{proof}

Proposition \ref{prop:symmetric-operad-results} reproves known results using the global context of the Grothendieck construction. Fresse works in the context of semi-model categories, rather than semi-model categories over a base, so (1)-(2) may be viewed as improvements of results in \cite{fresse-book} (Chapter 12), though surely these results were known to both Fresse and Spitzweck. In (3), we reprove \cite[Theorem 12.5.A]{fresse-book}, though without the assumption that the unit of $\M$ is cofibrant. Like \cite[Theorem 16.2.A]{fresse-book}, we also have a result for bimodules.

\begin{corollary}
Let $P$ and $R$ be two $\Sigma$-cofibrant symmetric operads (colored by $\fC$ and $\fD$ respectively).
\item If $f: R\to S$ is a weak equivalence of $\Sigma$-cofibrant colored operads, then it induces a Quillen equivalence between $(P,R)$-bimodules and $(P,S)$-bimodules.
\item If $g:P\to Q$ is a weak equivalence of $\Sigma$-cofibrant colored operads, then it induces a Quillen equivalence between $(P,R)$-bimodules and $(Q,R)$-bimodules.
\end{corollary}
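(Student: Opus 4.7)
The plan is to specialize the rectification machinery of Theorem \ref{theoremquillenequivfibers} to the Grothendieck construction for symmetric operadic bimodules, mirroring the treatment of algebras and left modules in Proposition \ref{prop:symmetric-operad-results}. I would begin by considering the functor $\Phi: (SOp \times SOp)^{op} \to CAT$ sending a pair $(O, O')$ of symmetric colored operads (colored by $\fC$ and $\fD$ respectively) to the category of $(O, O')$-bimodules, with its action by restriction along morphisms of operad pairs. Just as in the algebra and left-module cases, the forgetful functor from $\int \Phi$ into the product $\M^{\Sigma_\fC} \times \M^{\Sigma_\fD} \times \M^{\Sigma_{\fC,\fD}}$ transfers a cofibrantly generated global semi-model structure on $\int \Phi$. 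By Proposition \ref{prop:global-to-vert-over}, the horizontal base $SOp \times SOp$ then carries a semi-model structure over the product of symmetric collection categories.

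Next I would observe that the $\Sigma$-cofibrancy hypothesis on the operads involved translates precisely into $u$-cofibrancy of the base objects $(P,R)$ and $(P,S)$ (resp. $(P,R)$ and $(Q,R)$) in the sense of Definition \ref{defn:u-cofibrant}. Hence, Proposition \ref{prop:global-to-vert-over} endows each of the categories of bimodules in question with a transferred vertical semi-model structure over the category of $\Sigma_{\fC,\fD}$-collections. For the first part, the morphism $(id_P, f): (P,R) \to (P,S)$ is a horizontal weak equivalence between $u$-cofibrant objects, so Theorem \ref{theoremquillenequivfibers}(2) yields a Quillen equivalence as soon as the restriction functor $(id_P, f)^*$ reflects weak equivalences. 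This reflection holds because restriction along $(id_P, f)$ only alters the right action and leaves the underlying $\Sigma_{\fC,\fD}$-collection unchanged, while weak equivalences and fibrations of bimodules are detected on that underlying collection. The second statement is entirely symmetric, applying the same argument to $(g, id_R): (P,R) \to (Q,R)$ in place of $(id_P, f)$.

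The main obstacle is to confirm that the Grothendieck construction for symmetric bimodules really fits into the forgetful-morphism framework of Section \ref{sec:global-model} in a way compatible with $\Sigma$-cofibrancy, so that Proposition \ref{prop:global-to-vert-over} and Theorem \ref{theoremquillenequivfibers} apply without modification. This reduces to a direct bookkeeping exercise, extending the polynomial-monad description of $Gr(SOp, mod)$ to the two-sided case $Gr(SOp \times SOp, bimod)$; once this setup is recorded, the desired Quillen equivalences follow immediately from the rectification result, exactly as in the algebra and left-module cases already handled by Proposition \ref{prop:symmetric-operad-results}.
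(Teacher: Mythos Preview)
Your proposal is correct and follows essentially the same approach as the paper: the paper's proof simply states that the corollary follows from the same results that Proposition~\ref{prop:symmetric-operad-results} does, namely Proposition~\ref{prop:global-to-vert-over} and Theorem~\ref{theoremquillenequivfibers}, applied now to the Grothendieck construction for bimodules over a pair of operads. You have spelled out the details of this specialization (the base $SOp \times SOp$, the identification of $\Sigma$-cofibrancy with $u$-cofibrancy, and the reflection of weak equivalences by restriction) more explicitly than the paper does, but the strategy is identical.
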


\begin{proof}
This follows from the same results that Proposition \ref{prop:symmetric-operad-results} does.
\end{proof}

Most of the previous work on the homotopy theory of symmetric operads and their algebras/modules deals with reduced or classical operads. The case of constant-free symmetric operads seems much less studied, and we are unaware of any paper that proves the results above in the constant-free context. It appears our results about semi-model structures on algebras and modules over constant-free symmetric operads are new.

One could analogously study \textit{connected} $(P,R)$-bimodules via the Grothendieck construction, or algebras/modules over a dioperad or permutad. Additional restrictions are required in these latter settings. We leave the details to the reader.

We conclude this section by reiterating that known counterexamples prevent us from having a model structure on the category of symmetric operads or reduced symmetric operads in general model categories $\M$. Hence, global model structures are not possible in general. However, global semi-model structures \textit{over} a suitable base may be possible. To produce such structures would require the combination of the theory of unary tameness \cite{Reedy-paper} with the theory of quasi-tameness. We leave the development of unary quasi-tameness to the interested reader.

\begin{problem}
Develop a theory of unary quasi-tame substitudes, and prove that the category of algebras over such a substitude admit a transferred semi-model structure over the base model category.
\end{problem}

\section{Application to Twisted Modular Operads} \label{sec:twisted}

We conclude the paper with an application of the machinery of the preceding sections to put a model structure on the category of twisted modular operads. We recall the relevant terminology from \cite{getzler-kapranov}. 
Like Getzler and Kapranov, we fix a field $k$ of characteristic zero \cite{getzler-kapranov} (1.1), and we work in the model category of $\mathbb{Z}$-graded chain complexes of $k$-vector spaces. Getzler and Kapranov define categories of cyclic operads and modular operads \cite{getzler-kapranov} (1.5, 2.20), and these categories are known to admit transferred model structures \cite{batanin-berger} (9.7, 10.1). Getzler and Kapranov define the notion of a \textit{hyperoperad} \cite{getzler-kapranov} (4.1) to encode more general algebraic structure than one can encode with an operad. We note that the Baez-Dolan plus construction produces hyperoperads \cite{batanin-berger}. 

We now follow \cite[Section 4]{getzler-kapranov}. Getzler and Kapranov define a specific hyperoperad $\mathfrak{D}$ to be the monoidal unit of $\M$ in every degree. 
Its algebras are modular operads. 
They fix a cocycle $D$ and define an object $\tilde{D}$ (which we will denote $\cat O$) such that $\cat O$-algebras are twisted modular operads.

There is no known Set-valued operad whose algebras are twisted modular operads, and so the usual methods of transferring a model structure to the category of twisted modular operads cannot be used. Nevertheless, the techniques of this section can be used to place a model structure on this category, via the Grothendieck construction. As far as the authors know, this model structure has not appeared elsewhere in the literature.

\begin{theorem} \label{thm:twisted-modular}
The category of twisted modular operads of \cite{getzler-kapranov} has a vertical model structure.
\end{theorem}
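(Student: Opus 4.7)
The plan is to realize the category of twisted modular operads as the fiber $\Phi(\cat O)$ of a Grothendieck construction $\int \Phi$ and then apply Theorem \ref{thm:global-to-local}. First, I would take $\cat B$ to be the category of hyperoperads in $\M = Ch(k)$ and define $\Phi \colon \cat B^{\op} \to CAT$ by $\Phi(H) = \Alg_H(\M)$. Then $\Phi(\mathfrak{D})$ is the category of untwisted modular operads and $\Phi(\cat O)$ is the category of twisted modular operads, i.e.\ our target. The key observation is that even though there is no Set-valued operad whose algebras are twisted modular operads, the pair $(H, A)$ of a hyperoperad together with an $H$-algebra is amenable to the Grothendieck-construction machinery.

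Second, the preliminaries note that the Baez--Dolan plus construction produces hyperoperads, so $\cat B$ is itself encoded by a polynomial monad $T^+$ (obtained by applying the plus construction to the polynomial monad for modular operads). Proposition \ref{prop:poly-for-Gr} then identifies $\int \Phi$ with the category of algebras over a polynomial monad $Gr(T^+)$, and so $\int \Phi$ inherits a cofibrantly generated global semi-model structure, transferred from a suitable category of symmetric collections in $\M$, by Theorem 6.3.1 of \cite{white-yau}. Since $k$ has characteristic zero, every symmetric collection in $\M$ is projectively cofibrant (via Maschke's theorem, as exploited in the corollary preceding Section~\ref{sec:twisted}), so the filtration analysis underlying the transfer carries no cofibrancy obstructions; at every stage one is pushing out against a cofibration in $\M$, and the semi-model structure upgrades to a full global model structure on $\int \Phi$.

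Finally, Theorem \ref{thm:global-to-local} then yields a full vertical model structure on $\Phi(\cat O)$, which is the desired model structure on twisted modular operads. Note that this model structure is not transferred from $\M$ in the usual sense, matching the remark following Theorem \ref{thm:global-to-local}. The main obstacle will be making the polynomial-monad presentation of hyperoperads via the plus construction fully precise, and checking that $Gr(T^+)$ behaves well enough in characteristic zero that the transfer genuinely produces a full (not merely semi-) global model structure. If that direct upgrade is not immediate, a fallback is to first establish a full horizontal model structure on $\cat B$ (which is expected from tameness of $T^+$, since $T^+$ arises from a Baez--Dolan plus construction) and then invoke Theorem \ref{thm:semi-to-full} to promote the vertical semi-model structure on $\Phi(\cat O)$ to a full one.
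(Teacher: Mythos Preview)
Your approach is essentially the paper's: realize twisted modular operads as a fiber $\Phi(\cat O)$ of the Grothendieck construction over hyperoperads, obtain a full global model structure on $\int\Phi$ using characteristic zero, and invoke Theorem~\ref{thm:global-to-local}. Two small corrections are in order. First, the polynomial monad $T$ to which the plus construction is applied is the monad for \emph{non-symmetric operads}, not for modular operads; this is what makes $T^+$ the monad for hyperoperads (cf.\ \cite[Definition~5.21]{florian2023}). Second, your argument for upgrading the semi-model structure to a full one via Maschke and the filtration is more work than needed: the paper simply cites that in characteristic zero \emph{all} colored operads are admissible in $\M$ \cite[Section~8]{white-yau}, so $\int\Phi$, being algebras over a colored operad, carries a transferred full model structure directly. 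Your fallback via Theorem~\ref{thm:semi-to-full} is therefore unnecessary here, though it is a reasonable contingency.
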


\begin{proof}
As before, let $\M$ be the category of chain complexes over a field of characteristic zero. Let $T$ be the polynomial monad for non-symmetric operads, and note that $T^+$ is the monad for hyperoperads \cite[Definition 5.21]{florian2023}. 
The Grothendieck construction $\int \Phi$ in this case gives pairs $(\cat{H},O)$ where $\cat{H}$ is a hyperoperad and $O$ is an algebra over $\cat{H}$. 
Since $\int \Phi$ can be encoded as a category of algebras over a colored operad, and all colored operads are admissible in $\M$ \footnote{Technically, here we need a product of copies of $\M$, but it is easy to see that all operads are admissible because we are in characteristic zero and all groups act freely.} \cite[Section 8]{white-yau}, there is a full model structure on $\int \Phi$. Theorem \ref{thm:global-to-local} then implies there is a full model structure on all fibers, including on the category of twisted modular operads.
\end{proof}

\end{document}